\documentclass[11pt,a4paper]{amsart}
 \usepackage[]{amsmath, amsthm, amsfonts, amssymb, amscd,mathtools}
\usepackage[mathscr]{eucal}
\usepackage{hyperref}
\usepackage[all]{xy}
\usepackage[usenames,dvipsnames]{color}
\usepackage{todonotes}\usepackage{rotating}
\usepackage[shortlabels]{enumitem}
\usepackage{tikz}

\setlist[enumerate]{label=\it{(\roman*)},
  ref=\it{(\roman*)}}

\setlength{\textwidth}{5.8in}            
\setlength{\textheight}{9.2in}
\setlength{\topmargin}{-0.0in}

\setlength{\oddsidemargin}{.25in}
\setlength{\evensidemargin}{.25in}

\usepackage{dsfont}

\usepackage{blindtext}
\usepackage{subfiles}

\usepackage{stackengine}
\stackMath

\newcommand{\A}{\mathds{A}}

\newcommand{\C}{\mathds{C}}

\renewcommand{\P}{\mathds{P}}
\newcommand{\Q}{\mathds{Q}}
\newcommand{\R}{\mathds{R}}

\newcommand{\Z}{\mathds{Z}}

\newcommand{\caA}{\mathcal{A}}

\newcommand{\caO}{\mathcal{O}}

\newcommand{\caR}{\mathcal{R}}

\newcommand{\caZ}{\mathcal{Z}}

\newcommand{\sI}{\mathscr{I}}

\newcommand{\sL}{\mathscr{L}}

\newcommand{\sO}{\mathscr{O}}

\newcommand{\fg}{{\mathfrak{g}}}

 \DeclareMathOperator {\Div}{div}
 
 \DeclareMathOperator{\im}{Im}
 \DeclareMathOperator {\cl}{cl}
 \DeclareMathOperator {\CH}{CH} 
  
\DeclareMathOperator{\Spec}{Spec}

\numberwithin{equation}{section}

\theoremstyle{plain}

\newtheorem{prop}{Proposition}[section]

\newtheorem{cor}[prop]{Corollary}
\newtheorem{lem}[prop]{Lemma}
\newtheorem{thm}[prop]{Theorem}
\newtheorem{theoalph}{Theorem}

\theoremstyle{definition}
\newtheorem{df}[prop]{Definition}

\newtheorem{defalph}{Definition}

\theoremstyle{remark}
\newtheorem{rmk}[prop]{Remark}
\newtheorem{ex}[prop]{Example}

\newcommand{\wt}{\widetilde}
\newcommand{\wh}{\widehat}
\newcommand{\Pic}{{\rm Pic}}
\newcommand{\cyc}{{\rm cyc}}

\newcommand{\surj}{\twoheadrightarrow}
\newcommand{\inj}{\hookrightarrow}

\newcommand{\id}{{\operatorname{id}}}
\newcommand{\un}{\underline}
\newcommand{\ov}{\overline}

\begin{document}

\renewcommand\stackalignment{l}

\title{Arithmetic Cycles with Modulus}

\author{Souvik~Goswami}
\address{3 Beni Master Lane, Kolkata, WB, 700061, India.}
\email{gossouvik@gmail.com}

\author{Rahul Gupta}
\address{Institute of Mathematical Sciences, A CI of Homi Bhabha National Institute, 4th Cross St., CIT Campus, Tharamani, Chennai,
  600113, India.} 
  \email{rahulgupta@imsc.res.in}

\date{\today}

\thanks{Souvik Goswami was supported by an SFB postdoctoral fellowship at the University of Regensburg, the University of Barcelona Mar\'ia Zambrano postdoctoral fellowship, and partially funded by the Spanish MICINN research project PID2023-147642NB-I00 during a large duration of the project.}

\subjclass{14G40, 14C22, 14C25}
\keywords{Arithmetic Chow groups, Cycles with modulus, relative cohomology groups.}
\date{\today}
\newif\ifprivate
\privatetrue

\begin{abstract}
We add analytic components to algebraic cycles with modulus and construct an arithmetic Chow group with modulus that resembles the classical arithmetic Chow groups by Gillet and Soul\'e. The analytic component is dictated by imposing a vanishing condition on the cohomology class of a cycle with modulus. We establish several natural properties of this group as a consequence. 
\end{abstract}

\maketitle

\setcounter{tocdepth}{1}

\tableofcontents{}

\section{Introduction}\label{sec:Intro}
\subsection{Motivation}\label{sec:Motiv}
Let $k$ be a subfield of $\C$. Given a smooth quasi-projective variety $X$ over $k$, its Chow ring $\bigoplus_{p\geq 0}\CH^{p}(X)$ gives a nice notion of geometrically defined cohomology theory with many desirable properties. The chief among them is the (rational) isomorphism 
\begin{equation} \label{eqn:intro-1}
    K_{0}(X)\otimes \Q\cong \bigoplus_{p\geq 0}\CH^{p}(X)\otimes \Q
\end{equation}
with the Grothendieck group of coherent locally-free sheaves, given by the Chern character map. Recall that the first Chern class gives an isomorphism $c_{1}\colon \Pic(X)\cong \CH^{1}(X)$ between the group of isomorphism classes of line bundles and the divisor class group on $X$, which is the initial motivation for isomorphism \eqref{eqn:intro-1}. For a more general singular variety $X$, \eqref{eqn:intro-1} does not hold. Also, several well-known properties associated with smooth projective varieties are no longer true if one drops the projectiveness condition, for example, the existence of a degree map and the existence of the Albanese variety of $X$.  
One way to resolve this is to take a smooth compactification $X\subset \overline{X}$ such that $D=\overline{X}\setminus X$ is supported on an effective Caritier divisor, and to consider cycles with certain ramification conditions along the boundary $D$. 
These observations led to the quest for a geometrically defined cohomology theory that is sensitive to a divisor of a general smooth quasi-projective variety $X$  
and also restores \eqref{eqn:intro-1} in a suitable setting. 

Chow groups with modulus were defined in part to address this quest. The very first notion in this direction was given by Bloch and Esnault in \cite{BE03}, and over the years significant development was made in \cite{KL-08}, \cite{Park09}, \cite{KP12}, \cite{KP17}, \cite{Kerz-Saito}, \cite{Binda-Saito}, \cite{GK22}, et al. Following \cite[\S3]{Binda-Saito}, for a smooth quasi-projective variety $X$ over $k$ and an effective divisor $D$, one defines the Chow groups with modulus at $D$ heuristically as follows: Let $\caZ^{p}(X|D)$ be the free abelian group generated by reduced and irreducible closed subvarieties $Z$ of $X$ that do not meet $D$. Any such $Z$ will be called a (codimension $p$) cycle with modulus. Furthermore, let $\caR^{p}(X|D)$ be the subgroup defined by divisors of rational functions on subvarieties of codimension $p-1$, which are regular, in fact, a unit in a neighborhood of $D$, and are congruent to one modulo the ideal sheaf of $D$ (see Definition \ref{eqn:CH-M*-1-0} for details). We define the Chow group with modulus on $D$ as the quotient
\begin{equation} \label{eqn:intro-2}
\CH^{p}(X|D)\coloneqq \caZ^{p}(X|D)/\caR^{p}(X|D). 
\end{equation}
Although it is not clear whether the relative version of the isomorphism \eqref{eqn:intro-1} is true, it is known that the first Chern class map induces an isomorphism $\CH^{1}(X|D) \cong \Pic(X,D)$, where $\Pic(X, D)$ is the relative Picard group that consists of isomorphic classes of line bundles (that are trivial along $D$) with isomorphisms to $\caO_{D}$ on $D$. For $X$ proper, one can define a degree map on zero cycles with modulus (\cite[Proposition 2.10]{KP12}) and for $X$ projective, one can define an Albanese with modulus (\cite[Theorems 10.3 and 11.3]{BAK18}).

On the other end of the spectrum, arithmetic Chow groups were developed by Gillet and Soul\'e in \cite{GS90} to give a refined intersection theory for arithmetic varieties. An arithmetic variety $X$ is a scheme that is flat and of finite type over $\Spec(\Z)$, with smooth generic fibre $X_{\Q}$. If $X$ is regular, the notion of a geometric intersection theory was first established by Gillet and Soul\'e, using $K$-theory. However, without a notion of the completeness of such schemes, the idea of a degree associated with such an intersection theory was not readily available. For example, if $X$ is a proper arithmetic variety, then one can hope to define a degree with values in $\CH^{1}(\Spec(\Z))$. But this group is trivial, and would not be the right choice. The solution proposed by Gillet and Soul\'e was to compactify such arithmetic varieties by adding the fibre at infinity related to the embedding $\Q\hookrightarrow \C$. So, one can view a compactified arithmetic variety as a family over each prime, including the prime at infinity. Given a compactified arithmetic variety $X$ defined over $\Spec(\Z)$, one defines the arithmetic Chow group $\widehat{\CH}^{p}(X)$ of codimension $p$ as the free abelian group generated by tuples $(Z,g_{Z})$ modulo a suitable arithmetic rational equivalence, where $Z\in \caZ^{p}(X)$ is a cycle of codimension $p$, and $g_{Z}$ is an analytic object, called the Green current for the cycle $Z$, defined on the analytic space $X(\C)$ associated with $X$. With this definition, one can show that $\widehat{\CH}^{1}(\Spec(\Z))\cong \R$ and hence define an arithmetic degree morphism $\wh{\text{deg}}$ with values in $\R$. These arithmetic Chow groups come equipped with several desirable properties, namely flat pullback, proper pushforward, and, using $\wh{\text{deg}}$, a refined notion of an arithmetic intersection degree that has been used in questions related to arithmetic geometry. For example, Faltings defined the height of a closed irreducible subset of $\P^{n}_{\Z}$ using this intersection degree, which was crucial in the proof of the Mordell conjecture. 
 
The current article originated from the following natural question asked by the second author during a talk given by the first author: Assume that $X$ is a smooth and quasi-projective variety over a number field $k$, and $D$ is an effective divisor such that $D_{red}$ is a simple normal crossing divisor (sncd for short). Under these assumptions, how can one define $\wh{\CH}^{p}(X|D)$, an arithmetic Chow group with modulus? In this paper, we propose an answer to the above question by taking into account, for a cycle with modulus, the behaviour of its analytic fundamental class at $D_{red}(\C)$. We show that these groups have natural functorial properties with respect to appropriate morphisms, are modules over the usual arithmetic Chow groups, fit into a natural exact sequence, and the group of arithmetic divisors with modulus is isomorphic to the Hermitian relative Picard group. In synopsis, this provides a very natural amalgamation of Gillet and Soul\'e's theory with the theory of Chow group with modulus.

The results in this article can be viewed as a first step towards defining a concrete realization of arithmetic motivic cohomology with modulus, and giving an arithmetic analogue of the higher-dimensional global class-field theory. While this will be a long-term project, one of our short-term aims is to define an arithmetic analogue of the relative Grothendieck group $K_0(X, D)$, and study its connection with $\wh{\CH}^{p}(X|D)$. This will be the subject of our next paper.

\subsection{Results}
Let, as before, $X$ denote a smooth quasi-projective variety of dimension $d$, defined over a number field $k$. For ease of writing, we do not distinguish between $X$ and its corresponding analytic space $X(\C)$. Let $\caZ^{p}(X)$ be the free abelian group generated by reduced and irreducible subvarieties of codimension $p$ that compute $\CH^{p}(X)$. The definition of arithmetic Chow groups involves the following basic principle: Typically, the cycle class map
\begin{displaymath}
\cl_{p}\colon \caZ^{p}(X)\rightarrow H^{2p}(X) 
\end{displaymath}
can be represented by the fundamental class $\delta_{Z}$ of an algebraic cycle in homology, as well as a smooth form $\omega_{Z}$ of type $(p,p)$ in cohomology. Since they represent the same cycle class, the difference between the fundamental class and the current $[\omega_{Z}]$ associated to $\omega_{Z}$ (see Lemma \ref{lem:form--current-inc} for the definition) satisfies the differential equation 
\begin{displaymath}
\partial\overline{\partial}g_{Z}=[\omega_{Z}]-\delta_{Z}.
\end{displaymath}
The current $g_{Z}$ is called a Green current associated with the algebraic cycle $Z$. It can be chosen to be of type $(p-1,p-1)$ and satisfying $F^{\ast}_{\infty}g_{Z}=(-1)^{p-1}g_{Z}$ for the conjugate linear involution $F_{\infty}$ induced by the complex conjugation on $X(\C)$. The arithmetic Chow groups $\widehat{\CH}^{p}(X)$ are now defined by taking pairs $(Z,g_{Z})$ of algebraic cycles $Z$ and an associated Green current $g_{Z}$, and finally quotienting out by a suitable arithmetic rational equivalence. We note here that if $X$ is not projective, one has to invoke the notion of Green forms of logarithmic type along $|Z|$ to get a suitable Green current.

Now, suppose that $D$ is an effective divisor on $X$ whose support $E$ is an sncd. Following the above principle, if one wants to find a suitable theory of arithmetic Chow group with modulus, it needs a description of a cycle class map to a suitable cohomology group that is sensitive to the modulus $D$. Such a map was first introduced in \cite[\S 7,\S 8]{Binda-Saito} in a general context of higher Chow groups with modulus, and can be described in our setting as follows: As mentioned before, any $Z\in \caZ^{p}(X|D)$ is an ordinary algebraic cycle in $\caZ^{p}(X)$. The classical cycle class map can be seen as a composition
\begin{displaymath}
Z\mapsto H^{2p}_{|Z|}(X)\rightarrow H^{2p}(X).   
\end{displaymath}
Since $Z$ and $D$ does not meet, there is an isomorphism $H^{2p}_{|Z|}(X)\cong H^{2p}_{|Z|}(X,D)$ and the above map actually lands in relative cohomology
\begin{displaymath}
Z\mapsto H^{2p}_{|Z|}(X)\cong H^{2p}_{|Z|}(X,D_{red})\rightarrow H^{2p}(X,D_{red}).
\end{displaymath}
We call this the cycle class of $Z$ with modulus and use the symbol $cl_{p|D}(Z)$ to denote it.

Once we have this notion, the next task is to represent the relative cohomology groups $H^{\ast}(X,D_{red})$ using a suitable complex of differential forms. Since $D_{red}$ is an sncd, the hypercohomolgy associated with the sheaf of holomorphic forms $\Sigma_{D_{red}}\Omega_{X}^*$ vanishing on $D_{red}$ computes $H^{\ast}(X,D)$. Moreover, the inclusion $\Sigma_{D_{red}}\Omega^{\ast}_{X}\hookrightarrow \Sigma_{D_{red}}\caA^{\ast}_{X}$ into the sheaf of $C^{\infty}$-forms that vanish on $D_{red}$ is a quasi-isomorphism (see Remark \ref{rmk-non-reduced}). Since $\Sigma_{D_{red}}\caA^{\ast}_{X}$ is fine, the corresponding global section $\Sigma_{D_{red}}A^{\ast}(X)\coloneqq \Gamma(X, \Sigma_{D_{red}}\caA^{\ast}_{X})$ also computes $H^{\ast}(X,D)$. Hence, it is natural to use the complex $\Sigma_{D_{red}}A^{\ast}(X)$ to develop our theory. In particular, using Lemma \ref{lem:Surj*-2}, our first main result is the following:
\begin{theoalph}(Theorem \ref{thm:Surj})\label{th-A}
Let $Z\in \caZ^{p}(X|D)$. Then there exists a Green current $g_{Z}$ which satisfies the differential equation $\partial \overline{\partial}g_{Z}+\delta_{Z}=[\omega_{Z}]$, such that $\omega_{Z}\in \Sigma_{D_{red}}A^{p,p}(X)$, is real, satisfies $F^{\ast}_{\infty}\omega_{Z}=(-1)^{p}\omega_{Z}$, and its cohomology class $\{\omega_{Z}\}$ computes $cl_{p|D}(Z)$.    
\end{theoalph}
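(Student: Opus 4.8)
The plan is to isolate the modulus-specific input — producing the closed $(p,p)$-form $\omega_Z$ inside the complex $\Sigma_E A^{\ast}(X)$ — from the classical construction of Gillet--Soul\'e, which then produces $g_Z$ on the projective manifold $X$ by a $\partial\overline{\partial}$-argument. First I would record that $cl_{p|D}(Z)$ is a rational class of Hodge type $(p,p)$ in $H^{2p}(X,E)$: since $|Z|\cap E=\emptyset$, excision gives $H^{2p}_{|Z|}(X)\cong H^{2p}_{|Z|}(X,E)$, cohomological purity makes $H^{2p}_{|Z|}(X)$ a mixed Hodge structure pure of type $(p,p)$, and the composite $H^{2p}_{|Z|}(X)\to H^{2p}(X,E)$ is a morphism of mixed Hodge structures; hence the image of the integral fundamental class, which is $cl_{p|D}(Z)$, has the asserted properties. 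In particular it lies in the image of $H^{2p}(X,E;\R)$ and, with the usual Tate-twist normalization of the $F_\infty$-action (note $F_\infty$ preserves $E(\C)$ since $D$ is defined over $k$), it is a $(-1)^p$-eigenvector of $F_\infty^{\ast}$.

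Then I would apply Lemma \ref{lem:Surj*-2} — the relative analogue, for the complex $\Sigma_E A^{\ast}(X)$ computing $H^{\ast}(X,E)$, of the fact that a class of Hodge type $(p,p)$ has a smooth $(p,p)$ representative — to get a closed form $\omega\in\Sigma_E A^{p,p}(X)$ with $\{\omega\}=cl_{p|D}(Z)$. Because $\Sigma_E A^{\ast}(X)$ is stable under complex conjugation and under $F_\infty^{\ast}$ (again as $E(\C)$ is $F_\infty$-stable) and the target class is real and a $(-1)^p$-eigenvector, replacing $\omega$ by its symmetrization $\tfrac14\bigl(\omega+\overline{\omega}+(-1)^pF_\infty^{\ast}\omega+(-1)^p\overline{F_\infty^{\ast}\omega}\bigr)$ — which alters neither the cohomology class, nor the Hodge type, nor the vanishing along $E$ — we may take $\omega_Z:=\omega$ to be real with $F_\infty^{\ast}\omega_Z=(-1)^p\omega_Z$. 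This already establishes the assertions about $\omega_Z$ and its class.

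It then remains to construct $g_Z$. Forgetting the modulus, $\omega_Z$ is a smooth closed $(p,p)$-form on $X$ whose class in $H^{2p}(X)$ is the image of $cl_{p|D}(Z)$, i.e. the ordinary cycle class $cl_p(Z)$, which is also the de Rham class of the integration current $\delta_Z$. By the classical existence of Green currents of logarithmic type (\cite{GS90}) there are a current $g'$, smooth on $X\setminus|Z|$ with logarithmic singularities along $|Z|$, and a smooth $(p,p)$-form $\omega'$ with $\partial\overline{\partial}g'+\delta_Z=[\omega']$. Since $\omega'$ and $\omega_Z$ are smooth, of pure type $(p,p)$, and cohomologous on $X$, the $\partial\overline{\partial}$-lemma on the Kähler manifold $X$ produces a smooth $(p-1,p-1)$-form $\eta$ with $\omega_Z-\omega'=\partial\overline{\partial}\eta$; then $g_Z:=g'+\eta$ is still of logarithmic type along $|Z|$ and satisfies $\partial\overline{\partial}g_Z+\delta_Z=[\omega_Z]$. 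Finally I would symmetrize $g_Z$ under complex conjugation and under $(-1)^{p-1}F_\infty^{\ast}$: this leaves the equation unchanged, since $\delta_Z$ and $[\omega_Z]$ are already real and $F_\infty$-compatible, and makes $g_Z$ real with $F_\infty^{\ast}g_Z=(-1)^{p-1}g_Z$.

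The only ingredient beyond the classical machinery is the use of Lemma \ref{lem:Surj*-2}, which rests on the Hodge theory of $\Sigma_E A^{\ast}(X)$; accordingly the main obstacle will be the bookkeeping — reconciling the Tate twist and the $F_\infty$-signs on the Hodge structure of $H^{2p}(X,E)$ with the current-level equation on $X$ — rather than anything analytic, since $X$ is projective and the existence and regularity of $g_Z$ are standard.
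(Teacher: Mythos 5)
Your proposal is correct and its architecture matches the paper's proof: isolate the one non-classical ingredient --- a closed $(p,p)$-form vanishing along $E$ representing $cl_{p|D}(Z)$, obtained via Lemma \ref{lem:Surj*-2} --- then run the Gillet--Soul\'e machinery and the $\tau$/$F^{\ast}_{\infty}$-symmetrization (which is exactly how the paper averages $g'_Z$ and $\omega'_{Z|D}$ at the end). The genuinely different step is how that form is produced. The paper never invokes the mixed Hodge structure on $H^{2p}(X,E)$: it starts from a Green form $\fg_Z$ of logarithmic type, which gives the explicit cocycle $(\omega_0,-\tfrac{\iota}{2\pi}\partial\fg_Z)$ representing $\cl(Z)$ in $H^{p,p}_{Z_\infty}(X_\infty)=H^p\bigl(s(A^{p,\ast}(X_\infty)\to A^{p,\ast}(X_\infty\setminus Z_\infty))\bigr)$; since $Z_\infty\cap D_\infty=\emptyset$ this restricts to $H^p(K^{p,\ast}_{X_\infty,D_\infty})$, and Lemma \ref{lem:Surj*-2} --- a quasi-isomorphism of $(p,\ast)$-columns --- immediately hands over a representative in $\Sigma_{D_\infty}A^{p,p}$. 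Your route instead argues abstractly that $cl_{p|D}(Z)$ is a real class of Hodge type $(p,p)$ in $H^{2p}(X,E)$ by purity of $H^{2p}_{|Z|}(X)$, and then asks for a $d$-closed $(p,p)$-representative in $\Sigma_E A^{\ast}(X)$. Be aware that this last conversion is \emph{not} literally Lemma \ref{lem:Surj*-2}, which only identifies the $\bar\partial$-type cohomology of $\Sigma_E A^{p,\ast}$ with $H^p(K^{p,\ast}_{X,E})$; you additionally need strictness/$E_1$-degeneration of the Hodge filtration for the pair $(X,E)$ to turn ``type $(p,p)$'' into an actual pure-type form inside the subcomplex. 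That is true for an sncd $E$ in projective $X$, but it is heavier machinery than the paper uses, and it ties your argument more firmly to the projective case. Your construction of $g_Z$ (correcting a classical Green current by $\partial\bar\partial\eta$ via the $\partial\bar\partial$-lemma) is equivalent to the paper's (which solves $\delta_Z+dd^c g'_Z=[\omega'_{Z|D}]$ directly from the comparison of current and form cohomology), up to the harmless $dd^c$ versus $\partial\bar\partial$ normalization.
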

Two such Green currents always differ by a smooth form $u$ which satisfies $\partial\overline{\partial}u|_{E}=0$. Let's label the Green current of Theorem \ref{th-A} by $g_{Z|D}$. Then we define $\widehat{\caZ}^{p}(X|D)$ by pairs of the form $(Z, g_{Z|D})$. One can define a subgroup $\widehat{\caR}^{p}(X|D)$ of arithmetic rational equivalence with modulus generated by $\wh{{\rm div}(f)}\coloneqq ({\rm div}(f), -[\log|f|^{2}])$ for ${\rm div}(f)\in \caR^{p}(X|D)$, and pairs $(0, \partial u+\overline{\partial}v)$ for currents $u$ and $v$ of degrees $(p-2,p-1)$ and $(p-1,p-2)$, respectively. Hence, we have our main definition
\begin{defalph}
We define an arithmetic Chow group with modulus
\begin{displaymath}
\widehat{\CH}^{p}(X|D)\coloneqq \widehat{\caZ}^{p}(X|D)/\widehat{\caR}^{p}(X|D).
\end{displaymath}
\end{defalph}
Notice that, any current $T=\partial u+\overline{\partial}v$ automatically satisfy the condition that $\partial\overline{\partial}T=0$. Hence, we do not need to impose any vanishing condition on the complex of currents.

Much like the arithmetic Chow groups of Gillet and Soul\'e, the modulus version also satisfies the usual functoriality properties for smooth morphisms between smooth projective varieties (see Proposition \ref{functoriality}), and fits in an exact sequence (see Section \ref{sec:es}).

The Chow groups with modulus have a module structure over the classical Chow groups, which is defined by the usual product of algebraic cycles. It satisfies all the functorial properties, including the projection formula (\cite[Theorem 3.12]{KP17}). Correspondingly, at the arithmetic level, we get
\begin{theoalph}(Theorem \ref{thm:Prod-1})
There is an action 
\begin{displaymath}
  \cap\colon  \widehat{\CH}^p(X) \otimes \widehat{\CH}^q(X|D) \to \widehat{\CH}^{p+q}(X|D),
\end{displaymath}
which for a smooth morphism $\phi\colon X\to Y$ satisfies the projection formula.
\end{theoalph}

The isomorphism $\widehat{\Pic}(X)\cong\widehat{\CH}^{1}(X)$  between the group of isomorphism classes of hermitian line bundles and the arithmetic divisor class group, commuting with $\Pic(X)\cong\CH^{1}(X)$, is well-known.
On the modulus side, it is now natural to wonder about the existence of the relative version of the above isomorphism that commutes with 
$\Pic(X, D)\cong\CH^{1}(X|D)$. To address this question, 
we define the relative hermitian Picard group $\wh{\Pic}(X,D)$, consisting of triples $(L,\phi, ||\cdot||)$ such that the isomorphism class of $(L, \phi)$ belongs to $\Pic(X,D)$ and $||\cdot||$ is a hermitian metric on $L$ with the additional restriction that the smooth $(1,1)$-form representing its Chern class $c_{1}(L,||\cdot ||)$, vanish on $D_{red}$ (see Definition \ref{def:A-Pic-R-W*-1} for details). Then the following result is quite natural
\begin{theoalph}(Theorem \ref{thm:Chern-Iso})
There exists an isomorphism
\begin{displaymath}
\wh{\Pic}(X,D)\cong \widehat{\CH}^{1}(X|D) 
\end{displaymath}
which commutes with the isomorphism $\Pic(X,D)\cong\CH^{1}(X|D)$.
\end{theoalph}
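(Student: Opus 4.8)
The plan is to write the isomorphism down at the level of cocycles, mimicking the classical identification $\wh\Pic(X)\cong\widehat{\CH}^1(X)$ of Gillet--Soul\'e, and then to check bijectivity by hand. The relative/modulus conditions on the two sides are set up precisely so as to correspond, so the argument is mostly a transport of the classical dictionary, with two genuinely analytic points to verify.

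\textbf{The map.} To a triple $(L,\phi,\|\cdot\|)\in\wh\Pic(X,D)$ I associate the class of $(\mathrm{div}(s),-[\log\|s\|^2])$, where $s$ is a rational section of $L$ whose restriction near $E$ is nowhere vanishing and satisfies $\phi(s|_D)=1$ — i.e.\ the type of section used to realize $\Pic(X,D)\cong\CH^1(X|D)$. Then $\mathrm{div}(s)\in\caZ^1(X|D)$ since $s$ is a unit near $E$, and Poincar\'e--Lelong gives $\partial\overline{\partial}(-[\log\|s\|^2])+\delta_{\mathrm{div}(s)}=[c_1(L,\|\cdot\|)]$ with $c_1(L,\|\cdot\|)\in\Sigma_E A^{1,1}(X)$ by hypothesis; it is real and (as in the classical case) $F_\infty$-equivariant. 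Its class in $H^2(X,E)$ is the relative first Chern class of $(L,\phi)$, which equals $cl_{1|D}(\mathrm{div}(s))$ — the standard compatibility of the relative cycle class with the curvature representative, using the description of $H^\ast(X,E)$ by $\Sigma_E A^\ast(X)$, and compatible over $H^2(X)$ with the classical statement. Hence $(\mathrm{div}(s),-[\log\|s\|^2])\in\wh{\caZ}^1(X|D)$ in the sense of Theorem~\ref{th-A}. Two admissible sections differ by a rational function $f$ with $\mathrm{div}(f)\in\caR^1(X|D)$, and then the two cocycles differ by $(\mathrm{div}(f),-[\log|f|^2])=\wh{\mathrm{div}(f)}$, while an isometry of triples changes neither $\mathrm{div}(s)$ nor $\|s\|$; so the class is well defined. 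Multiplicativity of $\mathrm{div}$, $-\log\|\cdot\|^2$ and $c_1$ under $\otimes$ makes the map a homomorphism, and forgetting the metric and the Green current, respectively, yields a commuting square with $\Pic(X,D)\cong\CH^1(X|D)$.

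\textbf{Bijectivity.} For $p=1$ the currents $\partial u+\overline{\partial}v$ appearing in $\wh\caR^1(X|D)$ have negative degree, so $\wh\caR^1(X|D)$ is generated by the $\wh{\mathrm{div}(f)}$. If $(L,\phi,\|\cdot\|)\mapsto 0$, then $(\mathrm{div}(s),-[\log\|s\|^2])=\wh{\mathrm{div}(f)}$ for some $f$ with $\mathrm{div}(f)\in\caR^1(X|D)$; thus $t\coloneqq s/f$ is a nowhere-vanishing global section trivializing $L$, compatible with $\phi$ on $D$ (since $s\equiv f\equiv 1$ there), and $[\log\|t\|^2]=[\log\|s\|^2]-[\log|f|^2]=0$ forces the smooth function $\|t\|$ to be $\equiv 1$, so $(L,\phi,\|\cdot\|)$ is isometric to the unit triple, i.e.\ $0$. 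For surjectivity, let $(Z,g_{Z|D})\in\wh{\caZ}^1(X|D)$, and let $(L,\phi)\in\Pic(X,D)$ correspond to $[Z]\in\CH^1(X|D)$, with canonical rational section $s$, $\mathrm{div}(s)=Z$, $\phi(s|_D)=1$ (first for $Z$ effective, then extend by $Z=Z^+-Z^-$ and tensoring the metrized bundles). Since by Theorem~\ref{th-A} the current $g_{Z|D}$ is a Green current of logarithmic type along $|Z|$, subtracting the local logarithm of an equation of $Z$ produces locally smooth functions that patch to a smooth hermitian metric $\|\cdot\|$ on $L$ with $-[\log\|s\|^2]=g_{Z|D}$; by the Poincar\'e--Lelong computation above its curvature is exactly $\omega_Z\in\Sigma_E A^{1,1}(X)$, so $(L,\phi,\|\cdot\|)\in\wh\Pic(X,D)$ and maps to the class of $(Z,g_{Z|D})$. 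Compatibility of this assignment with $\wh\caR^1(X|D)$ shows it passes to classes.

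\textbf{The main obstacle.} The non-formal content is the pair of analytic matching statements: first, that a closed form in $\Sigma_E A^{1,1}(X)$ arising as a curvature compatibly with $\phi$ represents the \emph{relative} class $cl_{1|D}(\mathrm{div}(s))$, not merely its image in $H^2(X)$ — this is where the description of $H^\ast(X,E)$ by $\Sigma_E A^\ast(X)$ together with Theorem~\ref{th-A} is essential; and second, the reverse passage from the Green current $g_{Z|D}$ to an honest smooth metric, which uses that $g_{Z|D}$ is of logarithmic type, again supplied by Theorem~\ref{th-A}. The remaining bookkeeping — $F_\infty$-equivariance, the unit object, multiplicativity under tensor product — is identical to the classical $\wh\Pic(X)\cong\widehat{\CH}^1(X)$ and presents no new difficulty.
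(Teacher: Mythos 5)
Your proposal is correct and follows essentially the same route as the paper's proof of Theorem \ref{thm:Chern-Iso}: an explicit cocycle-level dictionary sending $(\sL,\|\cdot\|,\phi)$ to $(\Div(s),-[\log\|s\|^2])$ for a section $s$ compatible with $\phi$, with the inverse direction obtained by exponentiating the (locally smooth) difference between the Green current and $-[\log|f_\alpha|^2]$; the paper packages your injectivity-plus-surjectivity argument as an explicit two-sided inverse $\cyc_{X|D}$. Two small corrections: the smoothness of $g_{Z|D}+[\log|f_\alpha|^2]$ should be justified by the fact that two Green currents for the same cycle differ by a smooth form (\cite[Lemma~1.2.4]{GS90}), not by asserting that Theorem~\ref{th-A} makes $g_{Z|D}$ of logarithmic type (it only asserts existence of some good Green current); and since Definition~\ref{def:A-cycles*-M} only requires the curvature form to lie in $A^{1,1}(X_{\R}|D)$, your verification that it represents the relative class $cl_{1|D}(\Div(s))$ in $H^2(X,E)$ is true but not needed for membership in $\wh{\caZ}^1(X|D)$.
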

\subsection{Layout}
Here we discuss the organization of the article. Sections \ref{sec:Ana-Pre}, \ref{sec:Rec-ACG}, and \ref{sec:CH-M} are overall preliminary in nature, where we gather known results that will be used for the later sections. We point out that Lemma \ref{lem:Surj*-2} in Section \ref{sec:Ana-Pre} plays a key role in the main part of the article. In Section \ref{sec*:ACH-M}, we introduce a definition of arithmetic Chow groups with modulus, establish the functoriality properties for them, and show that they fit in an exact sequence. In Section \ref{sec:Prodcut-ACGM}, we develop the module structure. Finally, in Section \ref{sec:codim-one}, we define a relative hermitian Picard group and prove that it is isomorphic to the arithmetic Chow group of divisors with modulus.

\subsection{Notations} \label{sec:Notations} 
For a quasi-projective scheme $X$ over a field $k$ and $i \geq 0$, we let $X^{(i)}$ (resp. $X_{(i)}$) denote the set of codimension (resp. dimension) $i$ points of $X$. If $X$ is an integral 
quasi-projective scheme over $k$, we let $X^N$ denote the normalization of $X$ in its function field $k(X)$. In some places, we use \textit{sncd} as the shorthand for a simple normal crossing divisor.

\section{Analytic component} \label{sec:Ana-Pre}
In this section, we gather the analytic results that will be used later in this article. We begin by recalling the well-known notions of differential forms and currents.
\subsection{Differential forms and currents}
Let $M$ be a complex manifold of complex dimension $d$. For $n \geq 0$, we let $A^n(M)$ denote the $\C$-vector space of 
complex-valued $C^{\infty}$-forms of degree $n$. 
Recall that we have a decomposition 
\begin{equation} \label{eqn:Com-form*-2}
    A^n(M) = \oplus_{p+q = n} A^{p,q}(M),
\end{equation}
where, locally on $M$, $A^{p,q}(M)$ is generated by the $n$-forms of the type $f({z}, {\ov{z}}) d z_I \wedge d \ov{z}_J$, with  $I, J \subset \{ 1, \dots, d\}$ such that $| I | = p$ and $ |J| = q$, and $f(z, \ov{z})$ is a complex valued $C^{\infty}$-function on $M$. 
Let $d \colon A^n(M) \to A^{n+1}(M)$ denote the differential map of $C^{\infty}$-forms. We then have maps $\partial \colon A^{p,q}(M) \to A^{p+1,q}(M)$ and 
$\ov{\partial} \colon A^{p,q}(M) \to A^{p,q+1}(M)$.  
such that
\begin{equation}\label{eqn:Com-form*-3}
    \partial \circ \partial = 0 = \ov{\partial} \circ \ov{\partial} \textnormal{ and } d= \partial + \ov{\partial}. 
\end{equation}
Moreover, 
${\rm Ker}(\ov{\partial}^{0,0}) = \sO(M) \subset A^0(M)$,
where $\sO(M)$ denotes the complex of holomorphic functions on $M$. More generally, an element $\phi\in A^{p,0}(M)$ is holomorphic if $\overline{\partial}\phi=0$. This is the case if locally around a point $z\in M$ we can write
\begin{displaymath}
\phi(z)=\Sigma_{|I|=p}\phi_{I}(z)dz_{I},~\phi_{I}\in \sO(M).
\end{displaymath}

For $n\geq 0$, we let $A^n_c(M) \subset A^n(M)$ denote the subgroup of the compactly supported complex valued $C^{\infty}$-forms. We let  $A^{p,q}_c(M) = A^{p+q}_c(M) \cap A^{p,q}(M)$. 
 Note that an element $\omega \in A^{p,q}_c(M)$ is locally of the type $f({z}, {\ov{z}}) d z_I \wedge d \ov{z}_J$, where $f({z}, {\ov{z}})$ is supported on a compact subset. 
Conversely, if a form $\omega' \in A^{p, q}(M)$ is (locally) of the above type, then $\omega' \in A^{p,q}_c(M)$. 

We say a linear functional $T \colon A^n_c(M) \to \C$ is continuous in
the sense of Schwartz if for a sequence $\{\omega_n\}$ of compactly supported complex valued $C^{\infty}$-forms that are supported on a fixed compact subset and $\omega_n \to 0$, we have 
$T(\omega_n) \mapsto 0$, where $\omega_n \to 0$ means that in the local expression, the $C^{\infty}$-functions 
$f_n(z, \ov{z})$ along with finitely many of their derivatives tends to $0$. 
The set of such linear functional forms a group that we denote by 
$D^{2d-n}(M) :=  A^n_c(M)^*$. 
Similarly, we define $D^{p, q}(M) = A^{d-p, d-q}_c(M)^*$. Then
 $D^{n}(M) = \oplus_{p+q =n} D^{p,q}(M)$. 
An element of $D^n(M)$ is called a current on $M$.  

 Since 
 $\partial\colon A^{d-p-1, d-q}_c(M) \to A^{d-p, d-q}_c(M)$ respects continuity in
the sense of Schwartz, we have an induced map 
 $\partial\colon D^{p, q}(M) \to D^{p+1, q}(M)$
 so that $(\partial T)(\omega) = (-1)^{p+q+1} T( \partial{\omega})$. Similarly, we can define 
 $\ov{\partial} \colon D^{p, q}(M) \to D^{p, q+1}(M)$ and 
 $d=\partial+\overline{\partial}$ such that the current variant of \eqref{eqn:Com-form*-3} holds. We use the notation $\widetilde{A}^{\ast}(X)$ (resp. $\widetilde{D}^{\ast}(X)$) to denote $A^{\ast}(X)/\im(d)$ (resp. $D^{\ast}(X)/\im(d)$), and $\widetilde{A}^{p,q}(X)$ (resp. $\widetilde{D}^{p,q}(X)$) to denote $A^{p,q}(X)/\im(\partial)+\im(\overline{\partial})$ (resp. $D^{p,q}(X)/\im(\partial)+\im(\overline{\partial})$).

\begin{lem} \label{lem:form--current-inc}
With notations as above, there exists a canonical injective map 
\begin{equation} \label{eqn:form-current-inc}
    [~]\colon  A^{p. q}(M) \inj D^{p, q}(M)
\end{equation}
so that for $\omega_1 \in A^{p,q}(M)$ and $\omega_2 \in  A_c^{d-p, d-q}(M)$, we have 
\begin{equation} \label{eqn:form-current-inc*-2}
[\omega_1](\omega_2) = \int_M  \omega_1 \wedge \omega_2. 
\end{equation}
Moreover, it respects maps $\partial$, $\ov{\partial}$ and hence $d$. 
\end{lem}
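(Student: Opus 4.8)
The plan is to construct the map locally and patch, then verify injectivity and compatibility with the differentials. First I would recall that a current of bidegree $(p,q)$ is by definition a continuous linear functional on $A_c^{d-p,d-q}(M)$, so to send a smooth form $\omega_1\in A^{p,q}(M)$ to a current it suffices to define the pairing $\eta\mapsto\int_M\omega_1\wedge\eta$ for $\eta\in A_c^{d-p,d-q}(M)$ and check that this integral converges and is continuous in the sense of Schwartz. Convergence is immediate because $\omega_1\wedge\eta$ is a smooth $2d$-form that is compactly supported (its support lies in $\operatorname{supp}(\eta)$), so the integral is over a compact set; orientability of the complex manifold $M$ makes the integral well defined once we fix the canonical orientation. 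Continuity follows because if $\eta_n\to 0$ in the Schwartz sense with supports in a fixed compact $K$, then in each coordinate chart the coefficient functions of $\omega_1\wedge\eta_n$ (finite sums of products of the fixed smooth coefficients of $\omega_1$ with the coefficients of $\eta_n$) tend uniformly to $0$ on $K$, hence $\int_M\omega_1\wedge\eta_n\to 0$; this uses a partition of unity subordinate to a finite cover of $K$ to reduce to the chart-wise estimate.

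Next I would check that this assignment $[~]$ is $\C$-linear and injective. Linearity in $\omega_1$ is clear from linearity of $\wedge$ and of the integral. For injectivity, suppose $[\omega_1]=0$, i.e. $\int_M\omega_1\wedge\eta=0$ for all test forms $\eta\in A_c^{d-p,d-q}(M)$. Working in a coordinate chart and writing $\omega_1=\sum_{|I|=p,|J|=q} f_{IJ}\,dz_I\wedge d\bar z_J$, one chooses, for each fixed pair $(I,J)$, a test form $\eta$ of complementary type $dz_{I^c}\wedge d\bar z_{J^c}$ times a bump function $\chi$ (with appropriate sign coming from reordering $dz_I\wedge d\bar z_J\wedge dz_{I^c}\wedge d\bar z_{J^c}$ against the volume form); then $\int\omega_1\wedge\eta$ becomes $\pm\int f_{IJ}\,\chi\,dV$, and letting $\chi$ range over all bump functions forces $f_{IJ}\equiv 0$ on that chart. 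Since the chart and $(I,J)$ were arbitrary, $\omega_1=0$.

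Finally I would verify compatibility with $\partial$, $\ov\partial$, hence $d$. The key input is the sign conventions: for a current $T$ of bidegree $(p,q)$ one has $(\partial T)(\eta)=(-1)^{p+q+1}T(\partial\eta)$, and similarly for $\ov\partial$. So I must show $[\partial\omega_1]=\partial[\omega_1]$, i.e. $\int_M(\partial\omega_1)\wedge\eta=(-1)^{p+q+1}\int_M\omega_1\wedge(\partial\eta)$ for all $\eta\in A_c^{d-p-1,d-q}(M)$. This is exactly Stokes' theorem applied to the compactly supported $(2d-1)$-form $\omega_1\wedge\eta$: since $d(\omega_1\wedge\eta)=(d\omega_1)\wedge\eta+(-1)^{p+q}\omega_1\wedge d\eta$ has vanishing integral over $M$ (no boundary, compact support), comparing bidegrees $(d,d)$ on both sides isolates $\partial\omega_1\wedge\eta+(-1)^{p+q}\omega_1\wedge\partial\eta=0$ in the relevant bidegree, which rearranges to the desired identity. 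The same argument with $\ov\partial$ in place of $\partial$, and then adding, gives the statement for $d$.

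The main obstacle, though it is more bookkeeping than conceptual, is keeping the sign conventions consistent: the $(-1)^{p+q+1}$ in the definition of $\partial$ on currents, the Koszul sign $(-1)^{p+q}$ in the Leibniz rule for $d(\omega_1\wedge\eta)$, and the sign picked up when reordering $dz_I\wedge d\bar z_J\wedge dz_{I^c}\wedge d\bar z_{J^c}$ relative to the standard volume form all have to match up so that $[~]$ commutes with the operators on the nose rather than up to a sign. Once the conventions are pinned down (as they are implicitly in the definitions preceding the lemma), the verification is a direct application of Stokes' theorem chart by chart with a partition of unity.
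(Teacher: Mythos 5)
Your proof is correct and complete. The paper actually states this lemma without any proof at all (it is a standard fact about the inclusion of smooth forms into currents), so there is nothing to compare against; your argument --- convergence and Schwartz continuity of $\eta\mapsto\int_M\omega_1\wedge\eta$, injectivity via bump functions in charts, and compatibility with $\partial$ and $\ov{\partial}$ via Stokes' theorem with the sign $(-1)^{p+q+1}$ matching the paper's convention for $\partial$ on currents --- is exactly the standard one and fills the omission correctly.
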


Recall that $A^n(M) = A^n_{{\rm real}}(M) \otimes_{\R} \C$, where 
$A^n_{{\rm real}}(M)$ consists of the real $C^{\infty}$-forms on $M$ (considered as real manifold). We let 
$\tau \colon A^n(M) \to  A^n(M)$ denote the involution induced by the conjugation on $\C$. Note that the map $\tau$ takes compactly supported forms to themselves and hence induces an involution $\tau \colon A^{n}_c(M) \to  A^{n}_c(M)$. We define $\tau \colon D^{n}(M) \to  D^{n}(M)$ so that for $T \in D^n(M)$ and $\omega \in A^n_c(M)$, we have
\begin{equation} \label{eqn:conj-currents-forms}
\tau(T) (\omega) = \ov{T (\tau (\omega))} \in \C,
\end{equation}
where $\ov{(~)}$ denotes the complex conjugation. Note that since $\tau \colon A^{n}_c(M) \to  A^{n}_c(M)$ is conjugate linear, the map $\tau(T)$ is a linear map on $A^n_c(M)$ that is continuous in the sense of Schwartz, i.e., 
$\tau(T) \in D^n(M)$.  
It follows from the definition that for $p, q\geq 0$, we have 
\begin{equation} \label{eqn:real-forms/curr*-1}
\tau(A^{p,q}(M)) = A^{q, p}(M) \textnormal{ and }
\tau(D^{p,q}(M)) = D^{q, p}(M). 
\end{equation}
In particular, for each $p \geq 0$, the map $\tau$ induces involutions on $A^{p, p}(M)$ and $D^{p,p}(M)$.  

\begin{df} \label{defn:real-(p,p)-forms}
For $p\geq 0$,  the involution $\tau \colon A^{p, p}(M) \to A^{p,p}(M)$ (resp. $\tau \colon D^{p, p}(M) \to D^{p,p}(M)$)  is called the complex conjugation on $A^{p,p}(M)$ (resp. $D^{p, p}(M)$) and its fixed points are called as real $(p,p)$-forms (resp. real $(p,p)$-currents).
\end{df}

It is easy to see that the map \eqref{eqn:form-current-inc} takes real $(p,p)$-forms to real $(p,p)$-currents. Before proceeding further, we recall examples of real currents.

\begin{ex} \label{ex:curr-man}
\begin{enumerate}
\item 
Let $Z\subset M$ be an analytic subspace of $M$ of codimension $j$
and let $Z_{{\rm ns}}$ denote the dense open subset of the smooth points of $Z$. Then $Z_{{\rm ns}}$ is a complex manifold of complex dimension $d-j$. 
We define $\delta_Z \in D^{2j}(M)$ so that for $\omega \in A^{2d-2j}_c(M)$, we have 
\begin{equation} \label{eqn:Ex-Ana-sub}
    \delta_Z (\omega) := \int_{Z_{{\rm ns}}} \phi^*(\omega), 
\end{equation}
where $\phi\colon Z_{{\rm ns}} \inj Z \inj M$ is the inclusion map of complex manifolds. We can easily see that $\delta_Z \in D^{j, j}(M)$ and it is real $(j,j)$-current. 
Moreover, if $\pi\colon \wt{Z} \to Z$ is a resolution of singularity of $Z$, then 
\begin{equation}\label{eqn:Ex-Ana-sub*-1}
    \delta_Z (\omega) = \int_{\wt{Z}} \wt{\pi}^* (\omega),
\end{equation}
where 
$\omega \in A^{2(d-j)}_c(M)$ and $\wt{\pi} \colon \wt{Z} \to M$ is the induced map between complex manifolds. We can extend it linearly to analytic cycles.

\item  Let 
$L_{{\rm loc}}^1(M)$ denote the locally $L^1$-functions on $M$ and define $L^1$-forms of degree $(p,q)$ as 
$L^1_{{\rm loc}}(A^{p,q}(M)) = L^1_{{\rm loc}}(M)_{\C} \otimes_{A^0(M)} A^{p,q}(M)$. 
We then have a well-defined map 
$[~]\colon L^1_{{\rm loc}}(A^{p,q}(M)) \to D^{p, q}(M)$ so that for $\alpha \in L^1_{{\rm loc}}(A^{p,q}(M))$ and $\omega' \in A_c^{d-p, d-q}(M)$, we have 
\begin{equation} \label{eqn:Ex-L-1-curr}
[\alpha] (\omega') = \int_M \alpha \wedge \omega'.     
\end{equation}
In a more specific case, let $f\colon M \to \C$ be a meromorphic function on $M$, then the function 
$-\log |f|^2$ defines a locally $L^1$-function on $M$. We associate the current $[- \log |f|^2] \in D^0(M)$ to the meromorphic function $f$ which satisfies 
\begin{equation} \label{eqn:Div-An*-0}
(\iota/2\pi) \partial \ov{\partial} [- \log |f|^2] =- \delta_{\Div(f)}.
\end{equation}
\end{enumerate}
\end{ex}

\subsection{Extension lemma and relative forms}
We now prove an extension lemma for $C^{\infty}$-differential forms. 

\begin{lem}[Extension Lemma] \label{lem:Ext-Lem-I}
Let $N$ be a closed complex submanifold of $M$. Then the pull-back map 
$A^*(M) \to A^*(N)$ is surjective. 
\end{lem}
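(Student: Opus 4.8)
The plan is to prove the Extension Lemma by a standard partition-of-unity argument, reducing the global statement to a local one that is essentially trivial. First I would fix a form $\alpha \in A^*(N)$ and cover $M$ by coordinate charts. By the local structure theory for a closed submanifold, every point of $N$ has a neighborhood $U$ in $M$ with holomorphic coordinates $(z_1,\dots,z_d)$ such that $N \cap U = \{z_{r+1} = \cdots = z_d = 0\}$, where $r = \dim_{\C} N$; on such a chart $\alpha|_{N\cap U}$ is given by a sum of terms $f(z',\ov{z'})\, dz_I \wedge d\ov{z}_J$ with $I,J \subset \{1,\dots,r\}$, and one extends it simply by treating $f$ as a function of $(z_1,\dots,z_r)$ alone (ignoring the remaining coordinates), producing a smooth form $\widetilde{\alpha}_U \in A^*(U)$ that restricts to $\alpha$ on $N \cap U$. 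For the charts $V$ that do not meet $N$, set $\widetilde{\alpha}_V = 0$.

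Next I would glue these local extensions. Choose a locally finite open cover $\{U_i\}$ of $M$ consisting of charts of the two types above, together with a subordinate $C^\infty$ partition of unity $\{\rho_i\}$, and define $\widetilde{\alpha} = \sum_i \rho_i\, \widetilde{\alpha}_{U_i}$. This is a well-defined global smooth form on $M$ since the sum is locally finite. Restricting to $N$: at a point $x \in N$, only finitely many $\rho_i$ are nonzero near $x$, each $\widetilde{\alpha}_{U_i}$ with $x \in U_i$ restricts to $\alpha$ on $N \cap U_i$ (for charts meeting $N$) or is identically zero (for charts not meeting $N$, which then have $\rho_i \equiv 0$ on a neighborhood of $x$ in $N$), so $\widetilde{\alpha}|_N = \big(\sum_i \rho_i\big)\big|_N \cdot \alpha = \alpha$. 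Hence $A^*(M) \to A^*(N)$ is surjective.

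The only genuine subtlety — and the step I expect to be the main obstacle in writing it carefully — is the compatibility of the naive coordinate extensions on overlapping charts, or rather the fact that one does \emph{not} need such compatibility: the partition of unity absorbs all the discrepancies precisely because we only demand that the glued form \emph{restrict} correctly to $N$, not that it agree with any particular $\widetilde{\alpha}_{U_i}$ off $N$. One should also take a small amount of care that the chart extension respects the bidegree decomposition, so that if $\alpha$ is of pure type $(p,q)$ then so is $\widetilde{\alpha}$ — this is immediate from the explicit formula $f(z',\ov{z'})\,dz_I \wedge d\ov{z}_J \mapsto f(z_1,\dots,z_r,\ov{z}_1,\dots,\ov{z}_r)\,dz_I\wedge d\ov{z}_J$, but worth remarking since the applications in the paper use forms of type $(p,p)$.
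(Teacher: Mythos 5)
Your proposal is correct and follows essentially the same route as the paper: extend locally by pulling back the coefficient functions along the coordinate projection onto the submanifold slice, then glue with a partition of unity and use $\sum_i \rho_i = 1$ to see that the glued form restricts to $\alpha$. The remark that the extension preserves bidegree is a useful addition that the paper defers to the proof of Lemma \ref{lem:Surj*-2}.
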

\begin{proof}
    By the definition of $A^*(-)$, it suffices to prove the lemma for the $C^{\infty}$-functions, i.e., for $* =0$. But this follows from \cite[Lemma~5.34]{Lee03}. For the sake of completion, we write a proof here. 
    
    Let $\omega \in A^n(N)$. Let $\{U_{\alpha}\}_{\alpha}$ be a chart of $M$ such that $\{U_{\alpha} \cap N\}_{\alpha}$ is a chart of $N$. Let $z_{\alpha, 1}, \dots, z_{\alpha, d}$ be complex co-ordinates of $U_{\alpha}$ and let $z_{\alpha,1}, \dots, z_{\alpha, e}$ be the co-ordinates of $U_{\alpha}\cap N$. For all $I, J \subset \{1, \dots, e\}$ such that $|I|+|J|=n$, there exist $C^{\infty}$-functions, $f_{\alpha, I, J}$ on $U_{\alpha}$ such that 
    \[
    \omega|_{U_{\alpha}\cap N} = \sum_{I, J} f_{\alpha, I, J} dx_{\alpha, I} \wedge d y_{\alpha, J} \in A^n(N\cap U_{\alpha}). 
    \]
    As after possibly shrinking $U_{\alpha}$, we can assume that each $U_{\alpha}$ is isomorphic to a disc $\Delta(l_{\alpha})$ of radius $l_{\alpha}$ and under the restriction of this isomorphism, 
    $U_{\alpha} \cap N$ is isomorphic to  
    $\Delta'(l_{\alpha}) = \{(z_1, \dots, z_e, 0, \dots, 0) \in \Delta(l_{\alpha})\}$. Pulling back the functions $f_{\alpha, I, J}$ under the projection $\Delta(l_{\alpha}) \to  \Delta'(l_{\alpha})$, we get $C^{\infty}$-functions $h_{\alpha, I, J}$ defined on $U_{\alpha}$ such that $(h_{\alpha, I, J})|_{U_{\alpha}\cap N} = f_{\alpha, I, J}$. 
    We let 
    \[
    \omega_{\alpha} = \sum_{I, J} h_{\alpha, I, J} dx_{\alpha, I} \wedge d y_{\alpha, J} \in A^n(U_{\alpha}).
    \]
    By definition, it follows that $\omega_{\alpha}|_{U_{\alpha}\cap N} = \omega|_{U_{\alpha}\cap N}$. We now let $\{\phi_{\alpha}\}_{\alpha}$ be a partition of unity subordinate to $\{U_{\alpha}\}_{\alpha}$ and let 
    \[
    \eta = \sum_{\alpha} \phi_{\alpha} \omega_{\alpha} \in A^{n}(M). 
    \]
    Note that $\phi_{\alpha} \omega_{\alpha}$ is defined on $M$ so that $\phi_{\alpha} \omega_{\alpha}$ is equal to the well-defined product $(\phi_{\alpha}|_{U_{\alpha}}) \omega_{\alpha}$ on $U_{\alpha}$ and it is zero on $M \setminus {\rm supp}(\phi_{\alpha})$ (because $U_{\alpha} \cup (M \setminus {\rm supp}(\phi_{\alpha})) = M$). It is therefore an element of $A^n(M)$. 
    We are now done because we have
    \begin{eqnarray*}
    \eta|_{N} &=& \sum_{\alpha} (\phi_{\alpha}|_{N}) ( \omega_{\alpha}|_{N\cap U_{\alpha}})\\
    & = &\sum_{\alpha} (\phi_{\alpha}|_{N}) ( \omega|_{N\cap U_{\alpha}}) \\
    & =& \omega,
    \end{eqnarray*}
   where the last equality holds because $\sum_{\alpha} \phi_{\alpha} =1$. 
\end{proof}

The kernel of the map in Lemma \ref{lem:Ext-Lem-I} is given by $C^{\infty}$-forms on $M$ that vanish on $N$. 
Recall that an analytic space $N = \cup_{i=1}^m N_i$ is a normal crossing manifold if each irreducible component $N_i$ is a smooth complex manifold and the irreducible components of their intersections $N_I = \cap_{i\in I}N_i$ are also smooth complex manifold of the correct dimension, where $I\subset \{1,\dots, m\}$. 
In the remainder of this section, $I$ will denote a subset of $\{1,\dots, m\}$, and we denote by $|I|$ the cardinality of $I$. We denote by $K^{p,*}_N$ the total complex associated to the double complex
\[
\oplus_{|I|=1} A^{p,\ast}(N_I) \to \oplus_{|I|=2} A^{p,\ast}(N_{I}) \to \cdots \to A^{p,\ast}(\cap^{m}_{i=1}N_{i})\to 0.
\]
Below, we can generalize Lemma \ref{lem:Ext-Lem-I} to the case where $N$ is a simple normal crossing divisor of $M$.

\begin{lem} \label{lem:Surj*-2}
 Let $M$ be a complex manifold and let $N$ be a simple normal crossing divisor of $M$. Let $\Sigma_N A^{p,*}_M := \textnormal{Ker}(A^{p,*}(M) \to \oplus_i A^{p,*}(N_i))$. Then for each $p,q$, the following sequence is exact.
\begin{equation} \label{eqn:Surj*-2.2}
0 \to  \Sigma_N A^{p,q}_M \to A^{p,q}(M)  \to \oplus_{|I|=1} A^{p,q}(N_I) \to \oplus_{|I|=2} A^{p,q}(N_{I}) \to \cdots \to A^{p,q}(\cap^{m}_{i=1}N_{i})\to 0.
\end{equation}
 In particular, the natural map
 \begin{equation} \label{eqn:Surj*-2.1}
     \Sigma_N A^{p,*}_M  \to K^{p,*}_{M,N}
 \end{equation}
 is a quasi-isomorphism, where $K^{p,*}_{M,N} := s(A^{p,*}(M) \to K^{p,*}_N)$ is the associated total complex. 
\end{lem}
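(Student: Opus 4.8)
The plan is to prove exactness of the long sequence \eqref{eqn:Surj*-2.2} directly by an induction on the number $m$ of components of $N$, using the Extension Lemma (Lemma \ref{lem:Ext-Lem-I}) as the base case and a partition-of-unity/extension argument for the inductive step. Once \eqref{eqn:Surj*-2.2} is established, the quasi-isomorphism claim \eqref{eqn:Surj*-2.1} is formal: the mapping cone $K^{p,*}_{M,N} = s\bigl(A^{p,*}(M) \to K^{p,*}_N\bigr)$ fits into a short exact sequence of complexes whose third term is the truncated \v{C}ech-type complex $0 \to \oplus_{|I|=1}A^{p,*}(N_I) \to \cdots \to A^{p,*}(\cap_i N_i) \to 0$, and \eqref{eqn:Surj*-2.2} says precisely that this complex is a resolution of $\Sigma_N A^{p,*}_M[1]$ augmented by $A^{p,*}(M)$; chasing the associated long exact sequence in cohomology (for each fixed $q$, since $\del,\ov\del$ play no role here — the sequence is exact term-by-term in $q$) gives that \eqref{eqn:Surj*-2.1} induces an isomorphism on cohomology. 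So the entire content is the exactness of \eqref{eqn:Surj*-2.2}.

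For that, fix $p,q$ and write $A(Y) := A^{p,q}(Y)$ for brevity. Exactness at $A^{p,q}(M)$ is the definition of $\Sigma_N A^{p,q}_M$ together with left-exactness, which is immediate. Surjectivity at the last term $A^{p,q}(\cap_{i=1}^m N_i)$ and exactness at the intermediate \v{C}ech spots are what require work. I would argue as follows. First, I claim each restriction map $A^{p,q}(N_I) \to \oplus_{j\notin I} A^{p,q}(N_{I\cup\{j\}})$ is surjective onto the kernel of the next differential: given a tuple $(\omega_{I\cup\{j\}})_j$ of forms on the $N_{I\cup\{j\}}$ that agree on the double overlaps $N_{I\cup\{j,j'\}}$, one builds a form on $N_I$ restricting to all of them. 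Since the $N_{I\cup\{j\}}$ are themselves closed submanifolds of $N_I$ (or rather, $N_I$ is a complex manifold and $\{N_{I\cup\{j\}}\}$ is an sncd inside it), one can use Lemma \ref{lem:Ext-Lem-I} to extend each $\omega_{I\cup\{j\}}$ to a form $\tilde\omega_j$ on $N_I$, then patch the $\tilde\omega_j$ via a partition of unity $\{\rho_j\}$ subordinate to suitable neighborhoods so that the differences cancel; the key point making this patching succeed is the compatibility on overlaps. This is a standard \v{C}ech-resolution-of-the-structure-sheaf type argument carried out at the level of global $C^\infty$ sections, which is legitimate because $\caA^{p,q}_X$ is a fine sheaf, so the presheaf \v{C}ech complex of an open cover is exact in positive degrees and one can globalize.

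The cleanest organization: observe that \eqref{eqn:Surj*-2.2} is the complex of global sections of an exact sequence of fine sheaves on $M$, namely
\[
0 \to \Sigma_N \caA^{p,q}_M \to \caA^{p,q}_M \to \bigoplus_{|I|=1}(\iota_{N_I})_*\caA^{p,q}_{N_I} \to \bigoplus_{|I|=2}(\iota_{N_I})_*\caA^{p,q}_{N_I} \to \cdots \to 0,
\]
where exactness can be checked stalk-by-stalk. At a point $x$ lying on exactly the components $N_{i_1},\dots,N_{i_r}$ (and on no $N_j$ for $j\notin\{i_1,\dots,i_r\}$), a local computation in coordinates where $N_{i_s} = \{z_s = 0\}$ reduces the whole thing to the exactness of the augmented simplicial cochain complex of an $(r-1)$-simplex (the \v{C}ech complex of the cover of a point by $r$ copies of itself), tensored over $\caO$-flat modules of $C^\infty$-germs, which is exact. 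Since all the sheaves involved are fine (being $\caA^{p,q}$-modules on paracompact manifolds), taking global sections preserves exactness, giving \eqref{eqn:Surj*-2.2}. I expect the main obstacle — and the step deserving the most care — to be the purely local stalk computation at a multiple point, specifically checking that the germ-level complex really is the simplicial complex of a simplex rather than something with extra cohomology; one must be careful that "vanishing on $N$" at a point on several components means vanishing on the union, and that the divided-by-$z_s$ bookkeeping (or rather, the splitting of a $C^\infty$-germ on $\{z_1\cdots z_r = 0\}$ into pieces living on the individual hyperplanes) goes through, which is where the $C^\infty$ (as opposed to holomorphic) category is actually used, via the extension/patching from Lemma \ref{lem:Ext-Lem-I}.
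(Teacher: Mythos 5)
Your overall strategy differs from the paper's in organization but not really in substance: the paper first reduces (using that restriction preserves Hodge type, and a partition of unity) to the case where $M$ is an open ball, and then proves exactness of \eqref{eqn:Surj*-2.2} by induction on $m$, using the coordinate projections $\mathrm{pr}_s\colon M\to N_s$ to correct a first approximation supplied by the inductive hypothesis. Your ``cleanest organization'' --- sheafify the sequence, check exactness on stalks, and use fineness to pass to global sections --- is a legitimate and arguably tidier packaging of the same idea, since exactness of a sequence of fine sheaves does pass to global sections. The reduction of \eqref{eqn:Surj*-2.1} to \eqref{eqn:Surj*-2.2} is formal in both treatments (you via the cone and its long exact sequence, the paper via a spectral sequence); no issue there.

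The gap is exactly where you say it is, and it is the entire mathematical content of the lemma: the stalk computation at a point $x$ lying on components $N_{i_1},\dots,N_{i_r}$. Your justification --- that the stalk complex ``is the augmented simplicial cochain complex of an $(r-1)$-simplex tensored over $\caO$-flat modules of $C^\infty$-germs'' --- is not correct as stated: the terms $(\caA^{p,q}_{N_I})_x$ are germs of forms on \emph{different} submanifolds, not copies of one coefficient module, so exactness does not follow from acyclicity of a simplex. What actually makes the stalk complex exact is the existence of commuting local retractions $p_s(z)=(z_1,\dots,0,\dots,z_d)$ (zero in slot $s$) onto the $N_{i_s}$, from which one builds a contracting homotopy by inclusion--exclusion: for $r=2$, the germ $\eta = p_1^*\omega_1 + p_2^*\omega_2 - \pi_{12}^*(\omega_1|_{N_{12}})$ restricts to $\omega_i$ on $N_i$ precisely because $\omega_1$ and $\omega_2$ agree on $N_{12}$. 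This is the same mechanism as the paper's correction term $\omega''=\mathrm{pr}_1^*(\omega_1')$. Relatedly, your first sketch --- extend each $\omega_{I\cup\{j\}}$ to $N_I$ and ``patch via a partition of unity, using compatibility on overlaps'' --- fails as literally described: if $\tilde\omega_j$ extends $\omega_{I\cup\{j\}}$, then $\sum_j\rho_j\tilde\omega_j$ restricted to $N_{I\cup\{j_0\}}$ equals $\sum_j\rho_j(\tilde\omega_j|_{N_{I\cup\{j_0\}}})$, and $\tilde\omega_j|_{N_{I\cup\{j_0\}}}$ agrees with $\omega_{I\cup\{j_0\}}$ only on the deeper stratum $N_{I\cup\{j,j_0\}}$, not on all of $N_{I\cup\{j_0\}}$, so the convex combination need not restrict correctly. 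The partition of unity should only be used to localize; once local, the retractions must do the work.
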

\begin{proof}
 We first note that by a standard spectral sequence argument, the exactness of the sequence \eqref{eqn:Surj*-2.2} for all $q$ (and a fix $p$) implies the quasi-isomorphism \eqref{eqn:Surj*-2.1}. We are therefore reduced to showing the exactness of \eqref{eqn:Surj*-2.2}. Note that for a morphism $N' \to M'$ of smooth complex manifolds,  the pull-back map $A^m(M') \to A^m(N')$ takes the direct summnad $A^{p,q}(M')$ to the direct summand $ A^{p,q}(N')$ (for $p,q$ such that $p+q=m$). It therefore suffices to show the following sequence is exact. 
  \begin{equation} \label{eqn:Surj*-2.2.5}
0 \to  \Sigma_N A^{m}_M \to A^{m}(M)  \to \oplus_{i} A^m(N_i) \to \oplus_{\{i, j\}} A^m(N_{ij}) \to \oplus_{\{i, j, k\}} A^{m}(N_{ijk}) \cdots
  \end{equation}

    We first prove the exactness of \eqref{eqn:Surj*-2.2.5} in the case when $N$ is also a smooth manifold. Note that it suffices to show that the pull-back map $A^m(M) \to A^m(N)$ is surjective. But this follows from Lemma~\ref{lem:Ext-Lem-I} (also see \cite[Lemma~5.34]{Lee03}). Indeed, we proved in the referred lemma that the pull-back map $A^m_{{\rm real}}(M) \to A^m_{{\rm real}}(N)$ is surjective. But then the claim follows because 
    $A^m(M) = A^m_{{\rm real}}(M) \otimes_{\R} \C$. 

    We now prove the exactness of \eqref{eqn:Surj*-2.2.5} in the general case when $N= \cup_{i=1}^n N_i$ is a normal crossing manifold. 
    By a use of partition of unity, one can see that the above problem is local on $M$, i.e., it suffices to prove the lemma when $M$ is an open ball in $\C^d$ (see the end of the proof of Lemma~\ref{lem:Ext-Lem-I}).  We now work under this hypothesis and prove the lemma by induction on the dimension $d \geq 1$ of the manifold $M$ and $n \geq 1$, the number of components of $N$. The base cases when $n=1$ or $d =1 $ follow from the above paragraph. We assume that $d, n \geq 2$ and show that the sequence \eqref{eqn:Surj*-2.2} is exact. We first prove that the sequence 
     \begin{equation} \label{eqn:Surj*-2.3}
     A^m(M)  \to \oplus_{i} A^m(N_i) \to \oplus_{i, j} A^m(N_{ij}) 
     \end{equation}
    is exact and then use it to show the exactness of \eqref{eqn:Surj*-2.2.5}. 

    Recall that we are proving the lemma in the local case. Let $(z_1, \dots, z_d)$ be a co-ordinate system of $M$ (which is currently an open ball in $\C^d$) such that $N = V(z_1z_2 \cdots z_n) \subset M$.
    We let $N_i = V(z_i)$ and let $\un{\omega}= (\omega_1, \cdots, \omega_n) \in \oplus_{i} A^m(N_i)$ such that $(\omega_i)|_{N_{ij}} = (\omega_j)|_{N_{ij}}$ for all $i, j \in \{1, \dots, n\}$. By the induction hypothesis, we can find a smooth from $\omega' \in A^m(M)$ such that for all $2\leq j \leq n$, we have
     \begin{equation} \label{eqn:Surj*-2.4}
     (\omega')|_{N_j} = \omega_j \in A^m(N_j). 
     \end{equation}
     We let $\omega_1' = (\omega')|_{N_1} - \omega_1 \in A^m(N_1)$. Note that $(\omega_1')|_{N_{1j}} = 0$, for all $2\leq j \leq n$ because of the hypothesis on $\un{\omega}$. We now use the projection map ${\rm pr}_1 \colon M\to N_1$ which maps $(z_1, \dots, z_d) \mapsto (z_2, \dots, z_d)$ and take 
     $\omega'' = {\rm pr}_1^*(\omega_1') \in A^m(M) $.  We then have 
     \begin{equation}\label{eqn:Surj*-2.5}
         (\omega'')_{N_1} = \omega'_1 =  (\omega')|_{N_1} - \omega_1 \in A^m(N_1) \textnormal{ and } (\omega'')_{N_j} = 0 \in A^m(N_j)~ \forall j \geq 2.  
     \end{equation}
     Note that the last equality holds because 
     $(\omega'')|_{N_j} =({\rm pr}_1^*(\omega_1'))|_{N_j} = ({\rm pr}^1_{1j})^*(\omega_1')|_{N_{1j}} =0$, where ${\rm pr}^1_{1j} \colon N_j \to N_{1j}$ is the induced projection map. We are now done by \eqref{eqn:Surj*-2.4} and \eqref{eqn:Surj*-2.5}. Indeed, if we let 
     $\wt{\omega} = \omega' - \omega'' \in A^m(M)$, then 
      \begin{equation*} \label{eqn:Surj*-2.6}
         (\wt{\omega})|_{N_1} =  (\omega')_{N_1} - (\omega'')_{N_1} = (\omega')_{N_1} - ((\omega')_{N_1} - \omega_1) = \omega_1
    \end{equation*}
    and for all $2 \leq j \leq n$
    \begin{equation*} \label{eqn:Surj*-2.7}
         (\wt{\omega})|_{N_j} = (\omega')_{N_j} - (\omega'')_{N_j} =  (\omega')_{N_j} - 0 = \omega_j.
    \end{equation*}
    This completes the proof of the exactness of the sequence \eqref{eqn:Surj*-2.3}. Finally, we prove below the exactness of the sequence  \eqref{eqn:Surj*-2.2.5}. 

    We start by fixing certain notations. For $0 \leq r\leq n$, we let $\sI_r$ denote the set of subsets of $\un{n} = \{1, \dots, n\}$ of cardinality $r$. We then need to show the exactness of the following sequence. 
    \begin{equation} \label{eqn:Surj*-2.8}
    \oplus_{I \in \sI_{r-1}} A^m(N_I) \xrightarrow{\partial_r} \oplus_{I \in \sI_{r}} A^m(N_I) \xrightarrow{\partial_{r+1}} \oplus_{I \in \sI_{r+1}} A^m(N_I),
    \end{equation}
    where as before $N_J = \cap_{j\in J} N_j$ and $N_{\emptyset} = M$. Note that we have proven the exactness of \eqref{eqn:Surj*-2.8} when $r=1$ and need to show its exactness for $r\geq 2$. Let $\un{\omega} = (\omega_I)_{I\in \sI_r} \in \oplus_{I \in \sI_{r}} A^m(N_I)$ such that it maps to $0 \in \oplus_{I \in \sI_{r+1}} A^m(N_I)$.
    Let $\sI_{r\setminus 1} = \{I \in \sI_r | 1 \notin I\}$. Then by the induction hypothesis on $r$, the sequence
    \begin{equation} \label{eqn:Surj*-2.8.1}
    \oplus_{I \in \sI_{r-1\setminus 1}} A^m(N_I) \xrightarrow{\partial_r'} \oplus_{I \in \sI_{r\setminus 1}} A^m(N_I) \xrightarrow{\partial_{r+1}'} \oplus_{I \in \sI_{r+1\setminus 1}} A^m(N_I)
    \end{equation}
    is exact. Note that 
    \begin{equation} \label{eqn:Surj*-2.8.2}
    \partial_{r+1}' ((\omega_J)_{J\in \sI_{r\setminus 1}}) = (\partial_{r+1}(\underline{\omega}))_{I \in \sI_{r+1\setminus 1}} = 0. 
    \end{equation}
    Therefore 
    there exists 
    $\un{\omega}' = 
    (\omega'_I)_{I\in \sI_{r-1\setminus 1}} \in \oplus_{I \in \sI_{r-1\setminus 1}} A^m(N_I)$ such that for $J\in \sI_{r\setminus 1}$, we have
    \begin{equation} \label{eqn:Surj*-2.8.3}
    (\partial'_{r} (\un{\omega}'))_J = \sum_{i\in J} (-1)^i (\omega'_{J\setminus \{i\} })|_{N_J} = \omega_J \in A^m(N_J).
    \end{equation}
    Let $\un{\omega_1}=\un{\omega} - \partial_r((\un{\omega}', 0)) \in  \oplus_{J \in \sI_{ r}} A^m(N_J)$, where we put $0$ on the components corresponding to $J \in \sI_{1, r} := \sI_{r-1} \setminus\sI_{r-1\setminus 1}$ (i.e., when $1 \in J$). Note that the $J$-th component $(\un{\omega_1})_J$ of $\un{\omega_1}$ is zero if $J \in \sI_{r\setminus 1}$. Let us write $\un{\omega_1} = (0, \underline{\omega_2})$, where $\underline{\omega_2} \in \oplus_{I \in \sI_{1, r}} A^m(N_I)$ and the components corresponding to $J \in \sI_{r\setminus 1}$ are zero.

    We now work with the smaller dimension submanifold $N_1$ and the normal crossing divisor $\cup_{1< j \leq n} (N_1 \cap N_j)$ on $N_1$. By the induction hypothesis on the dimension of the manifold, the sequence
    \begin{equation} \label{eqn:Surj*-2.8.4}
    \oplus_{I \in \sI_{1, r-1}} A^m(N_I) \xrightarrow{\partial_r''} \oplus_{I \in \sI_{1, r}} A^m(N_I) \xrightarrow{\partial_{r+1}''} \oplus_{I \in \sI_{1, r+1}} A^m(N_I)
    \end{equation}
    is exact. Note that
    \begin{eqnarray*}
        \partial_{r+1}'' (\underline{\omega_2}) = \partial_{r+1}((0, \underline{\omega_2})) =  \partial_{r+1}(\underline{\omega_1}) =  \partial_{r+1}(\un{\omega} - \partial_r((\un{\omega}', 0)))= \partial_{r+1}(\un{\omega}) = 0. 
    \end{eqnarray*}
    In particular, there exists $\un{\omega}'' = 
    (\omega''_I)_{I\in \sI_{1, r-1}} \in \oplus_{I \in \sI_{1, r-1}} A^m(N_I)$ such that for $J\in \sI_{1, r}$, we have
    \begin{equation} \label{eqn:Surj*-2.8.5}
    (\partial''_{r} (\un{\omega}''))_J = \sum_{1 \neq i\in J} (-1)^i (\omega''_{J\setminus \{i\} })|_{N_J} = (\omega_2)_J \in A^m(N_J).
    \end{equation}
    We let $\underline{\widetilde{\omega}} = (\un{\omega}', \un{\omega}'') = 
    (({\omega}'_I)_I, ({\omega}''_J)_J) \in
    \oplus_{I \in \sI_{r-1\setminus 1}} A^m(N_I)\oplus \oplus_{J \in \sI_{1, r-1}} A^m(N_J) = \oplus_{I \in \sI_{r-1}} A^m(N_I)$. We then have 
    \begin{eqnarray*}
        \partial_r(\underline{\widetilde{\omega}}) & =^{} & 
         \partial_r(\underline{\omega}', 0) + \partial_r(0, \underline{\omega}'') \\
         & =^1 & \partial_r(\underline{\omega}', 0) + (0, \partial_r''( \underline{\omega}'')) = \omega \in \oplus_{I \in \sI_{r}} A^m(N_I),
    \end{eqnarray*}
    where the equality one holds because 
    $(\partial_r(0, \underline{\omega}''))_I  = 0$ if $1\notin I$ and by the calculation \eqref{eqn:Surj*-2.8.5},  
    $(\partial_r(0, \underline{\omega}''))_I  = \sum_{1 \neq i\in I} (-1)^i (\omega''_{I\setminus \{i\} })|_{N_I} =(\omega_2)_J$ for $1\in I \in \sI_r$. This completes the proof. 
\end{proof}

\begin{rmk} \label{rmk-non-reduced}
We summarize and extract the key essence of Lemma \ref{lem:Surj*-2} in this remark. Let for an analytic subvariety $N$, $\Sigma_{N}\Omega^{\ast}_{M}$ and $\Sigma_{N}\caA^{\ast}_{M}$ denotes the sheaves of holomorphic and $C^{\infty}$-forms on $M$ that vanishes on $N$ respectively. When $N$ is a sncd, it is well known that the inclusion $\Sigma_{N}\Omega^{\ast}_{M}\hookrightarrow \Sigma_{N}\caA^{\ast}_{M}$ is a quasi-isomorphism. This follows from the fact that the inclusion $\Omega^{\ast}_{N_{I}}\hookrightarrow \caA^{\ast}_{N_{I}}$ is a quasi-isomorphism for each $I$ (since $N_{I}$ is smooth for each $I$), and one can obtain a commutative diagram involving a sheaf theoretic version of \eqref{eqn:Surj*-2.2}. Since $\Sigma_{N}\caA^{\ast}_{M}$ is a fine sheaf, the upshot is that the cohomology of the global sections $\Sigma_{N}A^{\ast}_{M}$ computes the relative cohomology groups $H^{\ast}(M,N)$. In the world of algebraic cycles, the cycle class of an algebraic cycle is always given by a real $C^{\infty}$-form. This forces us to work with an sncd $N$, until we find a variant of $\Sigma_{N}\caA^{\ast}_{M}$ for a general (even non-reduced) analytic subvariety $N$ (see \S8 of \cite{Binda-Saito} for such a sheaf at the holomorphic level).
\end{rmk}

\section{Arithmetic Chow groups} \label{sec:Rec-ACG}

In this section, we recall Arithmetic Chow groups as defined in \cite{GS90}. We first give some details about the conjugate linear involution induced by the complex conjugation on $\C$. Even though these details are well-known, it will be helpful for the reader to see them in action through concrete examples and computations.
\subsection{Conjugate linear involution $F_{\infty}$ and arithmetic rings}

In this subsection, we briefly recall the definition of arithmetic rings. We discuss various examples of such rings. 

\begin{df}(\cite[Def~3.1.1]{GS90})
By an arithmetic ring $A= (A, \Sigma, F_{\infty})$, we mean a subring $A$ of complex numbers with a set $\Sigma$ of (ring) monomorphisms  $\sigma \colon A\inj \C$ and a conjugate linear involution 
$F_{\infty} \colon \C^{\Sigma} \to \C^{\Sigma}$ such that its restriction on $A$ is identity. 
\end{df}
\begin{ex}   \label{ex:F-infty}
Let $k$ be a number field such that $[k:\mathbb{Q}]=r+2s$, and 
\begin{displaymath}
\Sigma\coloneqq \{\sigma_{1},\dots,\sigma_{r}, \tau_{1},\overline{\tau_{1}},\dots, \tau_{s},\overline{\tau_{s}}\},
\end{displaymath}
where $\sigma_{i}$-s are the real and $\tau_{j}$-s are the complex embeddings of $k$. Then $F_{\infty}$ acts on $ \C^{\Sigma} = \prod_i \C^{\sigma_i} \times \prod_j (\C \times \C)^{\tau_j}$ by complex conjugation on the copies of $\C^{\sigma_i}$ and by acting on $(\C \times \C)^{\tau_j}$ so that 
$(z_1, z_2)^{\tau_j} \mapsto (\ov{z_2}, \ov{z_1})^{\tau_j}$ for each $j$. Following the same idea, for $k=\R$ the action is given by complex conjugation, while for $k=\C$, it is given by $(z_{1},z_{2})\mapsto (\overline{z_{2}},\overline{z_{1}})$.
\end{ex}
In general, we have the following description about the $F_{\infty}$-map. 
\begin{lem} \label{lem:F-infty-C-Sigma}
Given a conjugate linear map ring homomorphism $F_{\infty} \colon \C^{\Sigma} \to \C^{\Sigma}$ that is involution, there exists a bijection $\Theta \colon \Sigma \to \Sigma$ such that $F_{\infty} (z_{\sigma}) = (\ov{z}_{\Theta(\sigma)})$. 
\end{lem}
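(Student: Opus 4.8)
The statement to prove is Lemma~\ref{lem:F-infty-C-Sigma}: a conjugate-linear ring involution $F_\infty \colon \C^\Sigma \to \C^\Sigma$ must be of the form $F_\infty(z_\sigma) = (\ov{z}_{\Theta(\sigma)})$ for some bijection $\Theta\colon\Sigma\to\Sigma$. The strategy is to reduce everything to the structure of ring homomorphisms of $\C^\Sigma$ as a product of copies of $\C$, by composing $F_\infty$ with complex conjugation to make it $\C$-linear.

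First I would write $c\colon \C^\Sigma \to \C^\Sigma$ for the coordinatewise complex conjugation and set $\psi \coloneqq c \circ F_\infty$. Since $F_\infty$ is conjugate linear and $c$ is conjugate linear, $\psi$ is $\C$-linear; since both are ring homomorphisms, $\psi$ is a $\C$-algebra homomorphism of $\C^\Sigma$, and since $F_\infty^2 = \id$ while $c$ commutes appropriately (one checks $c \circ F_\infty = F_\infty \circ c$ holds here, or more carefully tracks the identity $\psi \circ \ov{\psi(\,\cdot\,)}$), $\psi$ is bijective, i.e. a $\C$-algebra automorphism of $\C^\Sigma$. The next step is the standard fact that every $\C$-algebra automorphism of $\C^\Sigma = \prod_{\sigma\in\Sigma}\C$ permutes the factors: the idempotents $e_\sigma$ (with $1$ in the $\sigma$ slot and $0$ elsewhere) are exactly the primitive idempotents of the ring, an automorphism must permute them, so there is a bijection $\Theta\colon\Sigma\to\Sigma$ with $\psi(e_\sigma) = e_{\Theta(\sigma)}$; since each component $\C$ has no nontrivial $\C$-algebra automorphism, this forces $\psi((z_\tau)_\tau) = (z_{\Theta^{-1}(\sigma)})_\sigma$, i.e. $\psi$ is exactly the coordinate permutation by $\Theta$. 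Unwinding, $F_\infty = c \circ \psi$ sends $(z_\tau)_\tau \mapsto (\ov{z}_{\Theta^{-1}(\sigma)})_\sigma$, which is the asserted form (after renaming $\Theta^{-1}$ as $\Theta$, or noting the involution property forces $\Theta^2=\id$ anyway).

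The step requiring the most care is verifying that $\psi = c\circ F_\infty$ is genuinely a $\C$-algebra \emph{automorphism}, in particular that it is injective/bijective and that $c$ and $F_\infty$ interact correctly so that $\psi$ is multiplicative \emph{and} $\C$-linear simultaneously. Conjugate-linearity of $F_\infty$ means $F_\infty(\lambda v) = \ov\lambda F_\infty(v)$ for $\lambda\in\C$, $v\in\C^\Sigma$ (here $\C$ acts diagonally), so $\psi(\lambda v) = c(\ov\lambda F_\infty(v)) = \lambda\, c(F_\infty(v)) = \lambda\psi(v)$, giving $\C$-linearity; multiplicativity of $\psi$ follows since both $c$ and $F_\infty$ are ring maps. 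For bijectivity, from $F_\infty^2 = \id$ one gets that $F_\infty$ is bijective, hence so is $\psi$. I expect this bookkeeping to be routine but it is the only place where the hypotheses ``conjugate linear'', ``ring homomorphism'', and ``involution'' all get used, so I would state it carefully. The remaining ingredient — that $\C$-algebra automorphisms of a finite product of fields permute the factors via the primitive idempotents — is entirely standard commutative algebra and I would invoke it with at most a one-line justification.
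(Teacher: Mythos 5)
The paper states Lemma~\ref{lem:F-infty-C-Sigma} without any proof, so there is no argument of the authors' to compare against; judged on its own, your proof is correct and complete, and it follows the standard route one would expect the authors to have in mind. Composing with coordinatewise conjugation $c$ to obtain a $\C$-algebra automorphism $\psi=c\circ F_{\infty}$ of $\C^{\Sigma}$, identifying $\psi$ as a permutation of the factors via the primitive idempotents $e_{\sigma}$ (using that the only $\C$-algebra endomorphism of $\C$ is the identity, which is precisely where conjugate-linearity is needed to rule out wild field automorphisms), and then unwinding $F_{\infty}=c\circ\psi$ is exactly the right argument; the only points worth making explicit in a final write-up are that $\Sigma$ is finite (so $z=\sum_{\sigma}z_{\sigma}e_{\sigma}$ is a finite sum of idempotents), and that the claimed identity $c\circ F_{\infty}=F_{\infty}\circ c$ is not actually needed anywhere (bijectivity of $\psi$ already follows from bijectivity of $c$ and of the involution $F_{\infty}$ separately), while the involution hypothesis is what forces $\Theta^{2}=\id$, as you note.
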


We now move to algebraic geometry and consider the smooth varieties over arithmetic fields. 
\begin{df}(\cite[Def 3.2.1]{GS90}) \label{def:X-infty}
Let $k = (k, \Sigma, F_{\infty})$ be an arithmetic field and let $f \colon X \to \Spec(k)$ be a smooth quasi-projective $k$-scheme of dimension $d \geq 0$. For $\sigma \in \Sigma$, we let 
$X_{\sigma} =  X \times_{\sigma} \Spec(\C)$ and let $X_{\Sigma} = \coprod_{\sigma \in \Sigma} X_{\sigma}$. Since $X$ is smooth over $k$, it follows that $X_{\Sigma}$ is a product of smooth quasi-projective varieties over $\C$. We let $X_{\infty}$ denote the smooth complex manifold 
$X_{\Sigma}(\C) =  \coprod_{\sigma \in \Sigma} X_{\sigma}(\C) = X(\C^{\Sigma})$.  We let $F_{\infty} \colon X_{\infty} \to X_{\infty}$ denote the induced diffeomorphism. 
\end{df}
Let $F^*_{\infty} \colon A^n(X_{\infty}) = A^n_{{\rm real}}(X_{\infty}) \otimes_{\R} \C \to A^n(X_{\infty})$ denote the $\C$-linear involution defined by  $F_{\infty} \colon X_{\infty} \to X_{\infty}$. Moreover, we also have involution $F_{\infty}^* \colon D^{n}(X_{\infty}) \to  D^{n}(X_{\infty})$ such that for $\omega \in A_c^{2d-n}(X_{\infty})$ and $T \in D^{n}(X_{\infty})$, we have
$F_{\infty}^*(T) (\omega) = T((F_{\infty})_* (\omega))$, where $(F_{\infty})_*(\omega)$ is the push-forward of the form $\omega$ along the diffeomorphism $F_{\infty}$. 
 
\begin{lem} \label{lem:F-infty-forms}
For $p, q$, we have 
$F_{\infty}^*(A^{p,q}(X_{\infty})) = A^{q, p}(X_{\infty})$ 
and $F_{\infty}^*(D^{p,q}(X_{\infty})) = D^{q, p}(X_{\infty})$ . 
In particular, for all $p$, $F^*_{\infty}$ induces involutions on $A^{p, p}(X_{\infty})$ and $D^{p, p}(X_{\infty})$. Moreover, 
$F^*_{\infty}({\iota}/{2\pi}   \partial \ov{\partial})= (-1) ({\iota}/{2\pi}   \partial \ov{\partial})$. 
\end{lem}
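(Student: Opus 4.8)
The plan is to reduce everything to a purely local computation in a chart, using the fact that $F_\infty$ is built out of the complex conjugation on $\C$. First I would recall, from Definition \ref{def:X-infty}, the explicit structure of $F_\infty$ on $X_\infty = X(\C^\Sigma)$: by Lemma \ref{lem:F-infty-C-Sigma} the map $\Theta\colon\Sigma\to\Sigma$ pairs up embeddings, and on the component $X_\sigma(\C)$ the diffeomorphism $F_\infty$ is the composite of the complex-conjugation map $X_\sigma(\C)\to X_{\bar\sigma}(\C)$ coming from $z\mapsto\bar z$ on the structure sheaf. In particular, if $z_1,\dots,z_d$ are holomorphic coordinates on a chart of $X_\sigma(\C)$, then after applying $F_\infty$ the natural coordinates on the image chart are $\bar z_1,\dots,\bar z_d$; so the pullback $F_\infty^*$ acts on differentials by $dz_i\mapsto d\bar z_i$ and $d\bar z_i\mapsto dz_i$, and on a coefficient function $f(z,\bar z)$ by $f\mapsto \overline{f}\circ(\text{the base conjugation})$, which again is a $C^\infty$ function (it is a $\C$-linear, not conjugate-linear, operation on forms since the conjugation on $\C^\Sigma$ has been composed with the geometric identification).

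Next I would verify the first two displayed identities. A local section of $A^{p,q}(X_\infty)$ is a sum of terms $f\, dz_I\wedge d\bar z_J$ with $|I|=p$, $|J|=q$; applying $F_\infty^*$ turns this into $(F_\infty^*f)\, d\bar z_I\wedge dz_J$, which after reordering lies in $A^{q,p}$. This proves $F_\infty^*(A^{p,q}(X_\infty)) = A^{q,p}(X_\infty)$, and $\geq$ in the other direction follows because $F_\infty^*$ is an involution (so it is its own inverse). The statement for currents then follows formally from the form statement together with \eqref{eqn:real-forms/curr*-1} and the defining identity $F_\infty^*(T)(\omega) = T((F_\infty)_*(\omega))$: since $(F_\infty)_*$ on $A_c^{\bullet}$ sends $A_c^{d-p,d-q}$ to $A_c^{d-q,d-p}$ by the same local computation, a current in $D^{p,q}(X_\infty) = A_c^{d-p,d-q}(X_\infty)^*$ gets sent to a functional on $A_c^{d-q,d-p}$, i.e.\ to an element of $D^{q,p}(X_\infty)$. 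The ``in particular'' about involutions on $A^{p,p}$ and $D^{p,p}$ is then immediate from $p=q$.

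For the last identity, $F^*_\infty(\tfrac{\iota}{2\pi}\partial\bar\partial) = -\tfrac{\iota}{2\pi}\partial\bar\partial$, I would argue as follows. Since $F_\infty$ is a diffeomorphism, $F_\infty^*$ commutes with the full exterior derivative $d = \partial + \bar\partial$. On the other hand, the local description shows $F_\infty^*$ swaps the roles of holomorphic and antiholomorphic differentials, hence $F_\infty^*\circ\partial = \bar\partial\circ F_\infty^*$ and $F_\infty^*\circ\bar\partial = \partial\circ F_\infty^*$. Therefore $F_\infty^*\circ\partial\bar\partial = \bar\partial\partial\circ F_\infty^* = -\partial\bar\partial\circ F_\infty^*$, and since $\iota/2\pi$ is a constant that $F_\infty^*$ (being $\C$-linear) passes through unchanged, the claimed sign follows. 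The one point requiring care — and the only place I expect any friction — is making the identification of coordinate charts under $F_\infty$ precise enough to justify $F_\infty^*(dz_i) = d\bar z_i$: one must be careful that the conjugation on $\C^\Sigma$ is composed with the geometric base-change isomorphism $X_\sigma\cong X_{\bar\sigma}$ so that the resulting operation on forms is $\C$-linear (as asserted just before the lemma), not conjugate-linear; once that bookkeeping is set up, all three statements are routine.
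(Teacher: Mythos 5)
Your proposal is correct and follows essentially the same route as the paper: the paper invokes Lemma \ref{lem:F-infty-C-Sigma} to get local real coordinates in which $F_\infty^*(x_i,y_i)=(x_i,-y_i)$ (equivalently, your $dz_i\mapsto d\bar z_i$) and then leaves the resulting local calculations to the reader, which are exactly the ones you carry out. Your spelled-out verifications of the swap $A^{p,q}\to A^{q,p}$, the dual statement for currents via $F_\infty^*(T)(\omega)=T((F_\infty)_*\omega)$, and the sign $F_\infty^*\partial=\bar\partial F_\infty^*$ giving $F_\infty^*\circ\partial\bar\partial=-\partial\bar\partial\circ F_\infty^*$ are all accurate, including the point that $F_\infty^*$ is $\C$-linear on forms.
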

\begin{proof}
    We note that by Lemma \ref{lem:F-infty-C-Sigma}, there exist a bijection $\Theta \colon \Sigma \to \Sigma$ and trivializations
$V^{\sigma} \subset \R^{2d}$ of $X_{\sigma}$, for $\sigma \in \Sigma$, with co-ordinates $(x^{\sigma}_1, y^{\sigma}_1, \cdots, x^{\sigma}_d, y^{\sigma}_d)$ such that 
$F_{\infty}(V^{\sigma}) \subset V^{\Theta(\sigma)}$ and 
\begin{equation} \label{eqn:F-inf-form*-1}
F_{\infty}^* (x^{\Theta(\sigma)}_1, y^{\Theta(\sigma)}_1, \dots, x^{\Theta(\sigma)}_d, y^{\Theta(\sigma)}_d) = (x^{\sigma}_1, - y^{\sigma}_1, \dots, x^{\sigma}_d, - y^{\sigma}_d).
\end{equation}
Since the claims in the lemma are local on $X$, the lemma follows from \eqref{eqn:F-inf-form*-1} and easy calculations. 
 
\end{proof}

\begin{df} \label{def:real-F-infty-forms-currents}
Let $k = (k, \Sigma, F_{\infty})$ be an arithmetic field and let $f \colon X \to \Spec(k)$ be a smooth quasi-projective $k$-scheme of dimension $d \geq 0$. For all $p\geq 0$, we let 
\begin{equation} \label{eqn:R-F-int-Forms*-1}
A^{p,p}(X_{\R}) = \{ \omega \in A^{p,p}(X_{\infty}) | \textnormal{ $\omega$ is real (Definition \ref{defn:real-(p,p)-forms}) and}~F_{\infty}^{*}(\omega) = (-1)^p \omega\},
\end{equation}
\begin{equation} \label{eqn:R-F-int-Forms*-2}
A^{p,p}_c(X_{\R}) = A^{p,p}(X_{\R}) \cap A^{2p}_c(X_{\infty})
\end{equation}
and 
\begin{equation}\label{def:real-F-infty-currents}
D^{p,p}(X_{\R}) = \{ T \in D^{p,p}(X_{\infty})~|~T~\textnormal{is real}, F_{\infty}^{*}(T) = (-1)^p T\}. 
\end{equation}
\end{df}

We now see the action of $F_{\infty}$ on some prototypical differential forms and currents. These examples make the above definitions natural.

\begin{ex} \label{ex:real-currents}
    \begin{enumerate}
\item
For $X=\P^1_{\Q}$, the projective line over $\Q$, the differential form $\omega=\frac{dz\wedge d\bar z}{(1+|z|^2)^2}$ on $X_{\infty}$ is an element in $A^{1,1}(X_{\R})$.
        \item  For an irreducible closed subset $Z$ of codimension $p$, we let 
$\delta_Z \in D^{p,p}(X_{\infty})$ denote the current associated to the analytic subspace $Z_{\infty} \subset X_{\infty}$ (see Example \ref{ex:curr-man}(ii)). Then $\delta_Z$ is a real $(p,p)$-current.  

Using \cite[Prop.XII, Pg.162]{G-H-V}, one can prove that 
\begin{equation} \label{ex:real-current-delta-Z}
     F_{\infty}^* \delta_Z = \epsilon_1  ~ \delta_Z \circ F_{\infty}^* ( F_{\infty})_* 
\end{equation}
where $\epsilon_1 = 1$ (resp. $-1$) if ${}^N F_{\infty} \colon N \to N$ is 
orientations preserving (resp. reversing), where $N= Z_{\infty}$. 
It follows from \eqref{eqn:F-inf-form*-1} for $N$ that $\epsilon_1 = (-1)^{d-p}$. 
We now note that $(F_{\infty})_* = \epsilon_2 (F_{\infty}^{-1})^*$,
where $\epsilon_2 = \pm 1$ depending if $F_{\infty}\colon X_{\infty} \to X_{\infty}$ is orientation preserving or reversing. By \eqref{eqn:F-inf-form*-1} for $X_{\infty}$, we have $\epsilon_2 =  (-1)^d$. Combining these observations with \eqref{ex:real-current-delta-Z}, we have $F_{\infty}^* \delta_Z = (-1)^p \delta_Z$ and hence $\delta_Z \in D^{p,p}(X_{\R})$. 

\item For a rational function $f \in k(X)^{\times}$  on $X$, the induced current $[-\log|f^2|] \in D^{0, 0}(X_{\infty})$ is a real current that belongs to the subgroup $D^{0, 0}(X_{\R})$. The main observation one needs to prove the above claim is that $F_{\infty}^* (f_{\sigma}) = \ov{f_{\Theta(\sigma)}}$, where $\Theta$ is an in Lemma 
\ref{lem:F-infty-C-Sigma} and $f_{\sigma} \in k(X_{\sigma})^{\times}$ be the pull-back of $f$ via the morphism $\sigma \colon X_{\sigma} \to X$.

\item If $k = \C$ then from Example \ref{ex:F-infty} it follows that 
\[
D^{p,p}(X_{\R}) = \{T\in D^{p, p}(X(\C))| \textnormal{ $T$ is real}\}.
\]
In particular, in this case, we don't have to worry about $F_{\infty}$-action on currents and we can work with real currents. 
\end{enumerate}
\end{ex}

\subsection{Arithmetic Chow groups}  \label{sec:Def-ACG}

Let $k = (k, \Sigma, F_{\infty})$ be an arithmetic field and let $f \colon X \to \Spec(k)$ be a smooth quasi-projective $k$-scheme of dimension $d \geq 0$. We let other notations be as in previous sections. Let $\wt{D}^{p,p}(X_{\R})$ to be the image of $D^{p,p}(X_{\R})$ inside $\widetilde{D}^{p,p}(X_{\infty})$.

Recall that a real current $g_{Z}\in D^{p-1,p-1}(X_{\infty})$ is said to be a Green current for a codimension $p$ cycle $Z$ if $\delta_Z + d d^c g_{Z} = [\omega_{Z}]$ such that $\omega_{Z} \in A^{p, p}(X_{\infty})$. It follows from 
\cite[Theorem~1.2.1]{GS90} that for a cycle $Z$, a Green current $g'_Z$ always exists. Taking $g_{Z} = \frac{1}{2}(g'_{Z} + (-1)^{p-1} F_{\infty}^*g'_{Z})$, we get a Green current $g_{Z} \in {D}^{p-1,p-1}(X_{\R})$, and \cite[Lemma~1.2.4]{GS90} shows that two different Green currents for the same cycle differs by an element in $\widetilde{A}^{p-1,p-1}(X_{\R})$.

We define the arithmetic Chow groups as follows: Let 
\begin{equation}  \label{eqn:ACG-cyc}
\wh{\caZ}^p(X) := \{(Z, \widetilde{g}_{Z}) \in \caZ^p(X) \oplus \wt{D}^{p-1,p-1}(X_{\R})~|~g_{Z}~\text{a Green current for}~Z\}. 
\end{equation}
For a closed subvariety $W$ of codimension $p-1$ and $f \in k(W)^{\times}$, we let $ \wh{{\rm div}(f)} := ({\rm div}(f), (\iota_W)_* [-\log | f |^2])$, where $\wt{W} \to W$ is a resolution of singularities, $\iota_W \colon \wt{W} \to X$ denote the induced map of smooth $k$-schemes and $(\iota_W)_* \colon D^{0, 0}(\wt{W}_\R) \to D^{p-1, p-1}(X_{\R})$ is the push-forward map of currents. It follows from \eqref{eqn:Div-An*-0} that 
$\wh{{\rm div}(f)} \in \wh{\caZ}^p(X)$. We denote the subgroup generated by all such $\widehat{{\rm div}(f)}$ by $\wh{\caR}^{p}(X)$, and define the arithmetic Chow group of $X$ as the quotient
\begin{equation} \label{eqn:ACG-def}
\wh{\CH}^p(X) := \frac{\wh{\caZ}^p(X)}{\wh{\caR}^p(X)}.    
\end{equation}

The existence of a Green current for an algebraic cycle shows that the canonical map $\widehat{\CH}^{p}(X)\rightarrow \CH^{p}(X)$, which maps $[(Z,\widetilde{g}_{Z})]$ to $[Z]$, is surjective. This surjective map fits in an exact sequence, which we recall among other properties of arithmetic Chow groups (see \cite[Section~3]{GS90} for details):
\begin{itemize}
\item
The arithmetic Chow groups fit into an exact sequence
\begin{displaymath}\label{eq:1001}
  \CH^p(X,1)\rightarrow\widetilde{A}^{p-1,p-1}(X_{\R})
  \rightarrow\widehat{\CH}^p(X)\rightarrow
  \CH^p(X)\rightarrow 0,  
\end{displaymath}
where $\CH^p(X,1)$ is Bloch's higher Chow group.
\item
There is an intersection pairing
\begin{displaymath}
\widehat{\CH}^p(X)\otimes \widehat{\CH}^q(X)\rightarrow
\widehat{\CH}^{p+q}(X),
\end{displaymath}
which turns $\oplus_{p\geq 0}\widehat{\CH}^p(X)$ into a unitary graded commutative algebra.
\item
Let $f\colon X\rightarrow Y$ be a smooth morphism of smooth quasi-projective
varieties. Then there exists a pullback
$$f^\ast\colon \widehat{\CH}^p(Y)\rightarrow \widehat{\CH}^p(X),$$
and if $f$ is also proper, then there is a pushforward map
$$f_{\ast}\colon \widehat{\CH}^p(X)\rightarrow \widehat{\CH}^{p-e}(Y),$$
where $e=\dim(X)-\dim(Y)$. 
The pullback is an algebra
homomorphism, and together with the pushforward, it satisfies the
projection formula.
\item The intersection product, the direct image, and the inverse image
  are compatible with the corresponding operations on the classical
  Chow groups. 
\end{itemize}
In \cite[Section~5.2]{GS90-2} it was shown that the codimension one arithmetic Chow group is isomorphic to the arithmetic Picard group of Hermitian line bundles on $X$. We briefly recollect the morphisms that define this isomorphism in the following subsection.

\subsection{Picard group of Hermitian line bundles} \label{sec:APG}

We start this section by defining the Picard group of Hermitian line bundles.
We let $X$ denote a smooth projective variety over an arithmetic field $k = (k, \Sigma, F_{\infty})$.  

Let $\sL$ be a line bundle on $X$. We let $(U_{\alpha})_{\alpha \in I}$ denote a cover of $X$ that trivialises $\sL$ and let $g_{\alpha \beta} \in \sO(U_{\alpha\beta})$ denotes the transition function of $\sL$, where $U_{\alpha\beta} = U_{\alpha} \cap U_{\beta}$. 
Let  $\sL_{\infty}$ denote the associated line bundle on $X_{\infty}$. 
A smooth Hermitian metric $||\cdot ||$ on $\sL_{\infty}$ is given by the data 
$(U_{\alpha,\infty}, h_{\alpha,\infty})_{\alpha \in I} := (U_{\alpha, \sigma}, h_{\alpha, \sigma})_{\sigma \in \Sigma, \alpha \in I}$, where 
$h_{\alpha, \sigma} \colon U_{\alpha, \sigma}(\C)\to \R_{>0}$ are smooth functions such that for all $\alpha, \beta \in I$  and $\sigma \in \Sigma$, we have
\begin{equation}
    h_{\alpha, \sigma} = |\sigma^*(g_{\beta \alpha})|^2 h_{\beta, \sigma} \textnormal{ on $U_{\alpha \beta, \sigma}$ and } F_{\infty}^*(h_{\alpha,\infty}) = h_{\alpha,\infty}. 
\end{equation}
By a Hermitian line bundle $(\sL, ||\cdot||)$ on $X$, we mean an algebraic line bundle $\sL$ on $X$ and a smooth Hermitian metric $||\cdot ||$ on $\sL_{\infty}$. In the above notations (i.e., given a cover $(U_{\alpha})_{\alpha \in I}$ that trivialises $\sL$), we write the Hermitian line bundle $(\sL, ||\cdot||)$ by $(U_{\alpha}, g_{\alpha \beta}, h_{\alpha, \sigma})$. We let $\wh{\Pic}(X)$ denote the set of isomorphic classes of  Hermitian line bundles on $X$.

For Hermitian line bundles $(\sL_1, || \cdot ||_1)  = (U_{\alpha}, g^1_{\alpha \beta}, h^1_{\alpha, \sigma})$ and $(\sL_2, || \cdot ||_2)  = (U_{\alpha}, g^2_{\alpha \beta}, h^2_{\alpha, \sigma})$ on $X$, we define their product as follows.
\[
(\sL_1, || \cdot ||_1)  \cdot (\sL_2, || \cdot ||_2) :=  (U_{\alpha},  g^1_{\alpha \beta} g^2_{\alpha \beta}, h^1_{\alpha, \sigma} h^2_{\alpha, \sigma}).
\]
Note that we can always assume both $\sL_i$ have trivialization for an open cover of $X$ and it is easy to check that all the above definitions do not depend on the cover.  
 With the above product, the set $\wh{\Pic}(X)$ forms an abelian group. Note that the identity element of $\wh{\Pic}(X)$ is given by the class of the  trivial Hermitian line bundle 
$(\sO_X, |\cdot |)= (X, 1, c_1)$, where $1 \in \sO(X)$ is the identity element and $c_1 \colon X_{\infty} \to \R_{>0}$ is the constant function with value $1$ and the inverse of an element 
$(\sL, ||\cdot||)=(U_{\alpha}, g_{\alpha \beta}, h_{\alpha, \sigma})$ is given by 
$-(\sL, ||\cdot||) := (U_{\alpha}, 1/g_{\alpha \beta}, 1/h_{\alpha, \sigma})$.

\begin{df} \label{defn:coho-chern-class}
Given a Hermitian line bundle $(\sL, ||\cdot||)$ on $X$ as above, we define its cohomological  Chern class 
$c_1(\sL, ||\cdot||)  \in {A}^{1,1}(X_{\infty}) $ so that  
\begin{equation}\label{eqn:Coho-Chern-class*-1}
c_1(\sL, ||\cdot||)|_{U_{\alpha, \infty}} = \frac{1}{2\pi i} \partial \overline{\partial}\log  h_{\alpha, \infty} \in A^{1,1}(U_{\alpha, \infty}). 
\end{equation}
Since the transition functions $g_{\alpha\beta} \colon U_{\alpha \beta, \infty} \to \C$ are nowhere vanishing holomorphic functions, it follows from \eqref{eqn:Div-An*-0} that the right-hand side in the above expression agrees on $U_{\alpha \beta}$. 

We therefore have a well define element $c_1(\sL, ||\cdot||)  \in A^{1,1}(X_{\infty}) $.
Since each $h_{\alpha, \sigma}$ are real valued $C^{\infty}$-function and $F_{\infty}^*(h_{\alpha,\infty}) = h_{\alpha,\infty}$, it follows that $c_1(\sL, ||\cdot||) \in A^{1,1}(X_{\R})$. 
\end{df}

We now define the arithmetic Chern class of a Hermitian line bundle. 
Recall that a rational section $s$ of $\sL$ is given by sections $s_{\alpha} \in \sO(U_{\alpha} \cap U)$ such that $s_{\alpha} = g_{\alpha \beta} s_{\beta}$ on $U \cap U_{\alpha \beta}$, where $U_{\alpha \beta} = U_{\alpha} \cap U_{\beta}$ and $U\subset X$ is a dense open subset. For short, we shall write $s = (U, U_{\alpha}, s_{\alpha})$ for such a section of the line bundle $\sL = (U_{\alpha}, g_{\alpha \beta})$. Let $s = (U, U_{\alpha}, s_{\alpha})$ be a rational section of the Hermitian line bundle $(\sL, || \cdot||) = (U_{\alpha}, g_{\alpha \beta}, h_{\alpha})$. We define its norm as 
\[
||s||^2 := s_{\alpha} \overline{s_{\alpha}} h_{\alpha}.
\]
Note that the above expression does not depend on $\alpha$. 
Let $V \subset U$ be the dense open subset where $s$ does not vanish (i.e., $V = U \setminus \textnormal{div}(s)$). The continuous map $ \log  || s||^2 \colon V_{\infty} \to \R$ then defines an element of $L^1_{\textnormal{loc}}(X_{\infty})$ and hence defines a current  $[\log  || s||^2] \in  D^{0,0}(X_{\infty})$. In fact, we have  $ [\log  || s||^2] \in D^{0,0}(X_{\R})$ and
\begin{equation}\label{eqn:A-Chern-class*-1}
  dd^c (-[ \log  || s||^2])+ \delta_{{\rm div}(s)} = [c_1(\sL, ||\cdot||) ].
 \end{equation}
In other words, we have $( \Div(s), -[ \log  || s||^2] ) \in \widehat{\mathcal{Z}}^1(X)$.

\begin{df} \label{defn:A-Chern-class}
For a Hermitian line bundle $(\sL, || \cdot||) = (U_{\alpha}, g_{\alpha \beta}, h_{\alpha})$ on $X$, we define its arithmetic Chern class as follows:
\begin{equation} \label{eqn:A-Chern-class*-0}
\widehat{c}_1(\sL, || \cdot ||) := ( \Div(s), -[ \log  || s||^2] ) \in \widehat{\CH}^1(X).
\end{equation}
 \end{df}
Since two sections of a line bundle differ by a rational function, it follows that the above element does not depend on the chosen section $s$. The following proposition is well known, and we provide a local description of the maps.
\begin{prop}\label{prop:A-Chern-class-3}
The assignment $(\sL, || \cdot ||)\mapsto \widehat{c}_1(\sL, || \cdot ||)$ defines a group isomorphism
\begin{equation}\label{eqn:A-Chern-cl-3.1}
\widehat{c}_1 \colon \wh{\Pic}(X) \to \widehat{\CH}^1(X). 
\end{equation}
\end{prop}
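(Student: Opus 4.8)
The plan is to exhibit an explicit inverse to the assignment $\widehat{c}_1$ and check that both composites are the identity. Given a pair $(Z, \widetilde{g}_Z) \in \widehat{\caZ}^1(X)$, the cycle $Z$ is a divisor of codimension one, hence (since $X$ is smooth and projective) there is a line bundle $\caO_X(Z)$ together with a canonical rational section $s_Z$ whose divisor is $Z$; concretely, choosing local equations $f_\alpha$ for $Z$ on a cover $(U_\alpha)$, the transition functions are $g_{\alpha\beta} = f_\alpha/f_\beta \in \sO^\times(U_{\alpha\beta})$. To upgrade this to a Hermitian line bundle we must produce a smooth metric. Writing $g_Z$ for the real current representing $\widetilde g_Z$, the defining equation $\delta_Z + dd^c g_Z = [\omega_Z]$ with $\omega_Z$ smooth says precisely that $g_Z + [\log|s_Z|^2_{\mathrm{triv}}]$ (computed with respect to the trivial fibre metric $h_\alpha^{\mathrm{triv}}=1$) is a current $u$ annihilated by $dd^c$ only up to a smooth form; more usefully, one uses the $\partial\overline{\partial}$-lemma on the compact Kähler manifold $X_\infty$ to write the smooth closed form $\omega_Z - c_1(\caO_X(Z), h^{\mathrm{triv}})$ as $dd^c\varphi$ for a smooth real function $\varphi$ (well defined modulo pluriharmonic, hence modulo constants, functions), and one sets $h_\alpha := e^{-\varphi} h_\alpha^{\mathrm{triv}}$. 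One must also symmetrize $\varphi$ so that $F_\infty^*\varphi = \varphi$, using Lemma \ref{lem:F-infty-forms} and the fact that $\omega_Z \in A^{1,1}(X_\R)$; then $h_{\alpha,\infty}$ satisfies $F_\infty^*(h_{\alpha,\infty}) = h_{\alpha,\infty}$ and $(\caO_X(Z), \|\cdot\|)$ is a genuine Hermitian line bundle by construction.

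Next I would check that this assignment descends to arithmetic Chow groups, i.e.\ that $\widehat{\mathrm{div}}(f) = (\mathrm{div}(f), -[\log|f|^2])$ maps to the trivial class in $\wh{\Pic}(X)$. This is immediate: $\caO_X(\mathrm{div}(f))$ is canonically trivial via $f$ itself, and under this trivialization the metric produced above is $e^{-\varphi}$ with $\varphi$ solving $dd^c\varphi = [\omega] - c_1 = 0$ on the nose (here $\omega = 0$ since $\mathrm{div}(f)$ is already rationally trivial with $\delta_{\mathrm{div}(f)} + dd^c[\log|f|^2] = 0$), so $\varphi$ is pluriharmonic, hence constant, hence $(\caO_X, e^{-\varphi}|\cdot|)$ is isomorphic to the trivial Hermitian line bundle. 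Thus we get a well-defined map $\Psi\colon \widehat{\CH}^1(X) \to \wh{\Pic}(X)$.

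That $\widehat{c}_1 \circ \Psi = \mathrm{id}$ and $\Psi \circ \widehat{c}_1 = \mathrm{id}$ now follows by unwinding the definitions together with equation \eqref{eqn:A-Chern-class*-1}: starting from $(\sL, \|\cdot\|)$, a rational section $s$ gives $\widehat{c}_1(\sL,\|\cdot\|) = (\mathrm{div}(s), -[\log\|s\|^2])$, and running $\Psi$ on this pair returns the line bundle $\caO_X(\mathrm{div}(s)) \cong \sL$ with a metric whose ratio to $\|\cdot\|$ solves a homogeneous $dd^c$-equation and is therefore a constant, which can be normalized away; conversely, starting from $(Z, \widetilde g_Z)$, applying $\widehat{c}_1$ to $\Psi(Z,\widetilde g_Z)$ recovers $(Z, -[\log\|s_Z\|^2])$, and one checks $-[\log\|s_Z\|^2] = -[\log|s_Z|^2_{\mathrm{triv}}] + [\varphi] \equiv g_Z$ in $\wt D^{0,0}(X_\R)$ by the choice of $\varphi$. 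Homomorphism property is clear from the fact that tensor product of line bundles corresponds to addition of divisors and multiplication of metrics corresponds to addition of $\log$'s. The compatibility with $\Pic(X) \cong \CH^1(X)$ is the statement that forgetting the metric and the Green current is compatible on both sides, which is immediate. \textbf{The main obstacle} is the metric construction in the first paragraph: one needs the $\partial\overline\partial$-lemma (so projectivity/Kählerness of $X_\infty$ is essential) to pass from the cohomological identity between $\omega_Z$ and $c_1$ to an honest smooth potential, and then a careful bookkeeping of the $F_\infty$-symmetrization and of the ambiguity (pluriharmonic = constant functions on a compact connected manifold) to see that the resulting Hermitian line bundle is well-defined up to isomorphism independently of all choices.
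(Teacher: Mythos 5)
Your overall strategy---inverting $\wh{c}_1$ by sending $(Z,g_Z)$ to $\caO_X(Z)$ equipped with a metric whose Chern form is $\omega_Z$---is the same as the paper's, but the way you construct the metric has a genuine gap. You determine the potential $\varphi$ from the smooth form $\omega_Z$ alone, by solving $dd^c\varphi=\omega_Z-c_1(\caO_X(Z),h^{0})$ via the $\partial\ov{\partial}$-lemma, and you assert that the resulting ambiguity of $\varphi$ by locally constant functions ``can be normalized away'' so that the Hermitian line bundle is well defined up to isomorphism. That is false: two metrics on the same line bundle differing by a constant factor $e^{c}$ are isomorphic in $\wh{\Pic}(X)$ only when $e^{c}=|\sigma(\lambda)|^2$ for a global unit $\lambda$, and correspondingly the class $(0,[c])\in\wh{\CH}^1(X)$ is nonzero for general $c\in\R$ (this is exactly the phenomenon behind $\wh{\CH}^1(\Spec(\Z))\cong\R$). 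Note also that in degree $(0,0)$ there is nothing to quotient by, so $\wt{D}^{0,0}(X_{\R})=D^{0,0}(X_{\R})$ and the Green current is an honest current: the class of $(Z,g_Z)$ remembers strictly more than $\omega_Z$, namely $g_Z$ itself up to the countable group $\{-\log|\sigma(f)|^2\}$. Since your $\Psi$ only sees $\omega_Z$, the composite $\wh{c}_1\circ\Psi$ returns $g_Z$ only up to an uncontrolled locally constant function, so $\Psi$ is neither well defined nor inverse to $\wh{c}_1$. (A smaller point: $h^{\mathrm{triv}}_{\alpha}=1$ is not a Hermitian metric on $\caO_X(Z)$ unless $|g_{\alpha\beta}|\equiv 1$, so neither $c_1(\caO_X(Z),h^{\mathrm{triv}})$ nor $[\log|s_Z|^2_{\mathrm{triv}}]$ is defined; you must first fix an honest reference metric, which exists by a partition of unity.)

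The fix, which is what the paper does, is to build the metric directly out of the Green current rather than out of its curvature: on each $U_{\alpha,\sigma}$ both $g$ and $[-\log|\sigma^*(f_{\alpha})|^2]$ are Green currents for $Z\cap U_{\alpha,\sigma}$, so by \cite[Lemma~1.2.4]{GS90} their difference is the current of a smooth function $h'_{\alpha,\sigma}$, and one sets $h_{\alpha,\sigma}=\exp(-h'_{\alpha,\sigma})$. This is purely local (no $\partial\ov{\partial}$-lemma, no compactness), pins the metric down with no constant ambiguity, and gives $-[\log\|s\|^2]=g$ on the nose, so both composites are the identity by unwinding definitions. If you replace your global potential argument by this local one, the rest of your proof (descent past $\wh{{\rm div}(f)}$, the homomorphism property, compatibility with $\Pic(X)\cong\CH^1(X)$) goes through essentially as you wrote it.
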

\begin{proof}
We construct an explicit inverse map that we call the cycle class map. 
Let $(Z, g) \in \wh{\mathcal{Z}}^1(X)$ be a cycle. Since $X$ is smooth, $Z$ is a Cartier divisor $(U_{\alpha}, f_{\alpha})$, where $f_{\alpha}$ is a rational function on $ U_{\alpha}$ such that $g_{\alpha \beta}= f_{\alpha}/ f_{\beta}\in \sO_{U_{\alpha \beta}}^{\times}$. We take the associated line bundle 
$\sO_X(Z) = (U_{\alpha}, g_{\alpha \beta}= f_{\alpha}/ f_{\beta})$. We now give the Hermitian inner product on $\sO_X(Z)$. For this, we note that two Green currents of a cycle differ by a smooth form (see \cite[Lem.~1.2.4]{GS90}). By  \eqref{eqn:A-Chern-class*-1}, we have 
\begin{equation} \label{eqn:A-Chern-class-3-0*}
i/2 \pi \partial \ov{\partial} ([ - \log |\sigma^*(f_{\alpha}) |^2]) = - \delta_{Z \cap U_{\alpha, \sigma}} = 
i/2 \pi \partial \ov{\partial}(g)|_{U_{\alpha, \sigma}} \in D^{1,1}(U_{\alpha, \sigma}). 
\end{equation}
In particular, there exist $h_{\alpha, \sigma}' \in A^{0,0}(U_{\alpha, \sigma})$ such that $
[h_{\alpha, \sigma}'] = (g)|_{U_{\alpha, \sigma}} + [\log |\sigma^*(f_{\alpha})|^2]$. 
We let $h_{\alpha, \sigma} = {{\rm exp}}^{- h_{\alpha, \sigma}'} \colon U_{\alpha, \sigma} \to \R_{>0}$. Then $h_{\alpha, \sigma}$ are smooth real functions that satisfy 
\begin{equation}
 |g_{\beta \alpha}|^2 h_{\beta, \sigma}  = h_{\alpha, \sigma} \textnormal{ and } F_{\infty}^*(h_{\alpha, \infty}) = h_{\alpha, \infty},
\end{equation}
where $h_{\alpha, \infty} \colon U_{\alpha, \infty} \to \R_{>0} $ is the induced smooth function. 
We therefore have a Hermitian line bundle 
\begin{equation} \label{eqn:A-Chern-class-3-1*}
(\sO_X(Z), || \cdot||_g) := (U_{\alpha}, g_{\alpha \beta}= f_{\alpha}/f_{\beta}, h_{\alpha, \sigma} = {{\rm exp}}^{-(g)|_{U_{\alpha, \sigma}} - [\log |\sigma^*(f_{\alpha})|^2]}).
\end{equation}
The assignment $(Z, g) ) \mapsto (\sO_X(Z), || \cdot||_{g})$ gives 
 well-defined group homomorphism 
\begin{equation} \label{eqn:A-Chern-class-3-4*}
\cyc_X \colon  \widehat{\CH}^1(X) \to \wh{\Pic}(X).
\end{equation}
By unfolding the definitions, we see that 
$\wh{c}_{1} \circ \cyc_X = {\rm Id}$  and $
\cyc_X  \circ \wh{c}_{1} = {\rm Id}$. 
This implies the arithmetic Chern class map 
\eqref{eqn:A-Chern-cl-3.1} is an isomorphism.  
\end{proof}

\section{Chow groups with modulus} \label{sec:CH-M}

In this section, we recall the definition and basic properties of the Chow group with modulus that was defined in \cite[\S3]{Binda-Saito} (the name relative Chow group was used in loc.cit.).

We fix the following notations: Let $k$ be a field and let $X$ be a smooth quasi-projective scheme over $k$ of pure dimension $d \geq 0$. We let $D \hookrightarrow X$ be an 
effective Cartier divisor on $X$. We call the pair $(X, D)$ to be a modulus pair.

We let $\mathcal{Z}^p(X|D)$ denote the free abelian group generated on the set of codimension  $p$ irreducible closed subsets $Y$ 
of $X$, such that $Y \cap D = \emptyset$.  
For a codimension $(p-1)$ irreducible closed subset $W$ such that $W\not\subset D$, we let $\nu: W^N \to X$ denote the projection from the normalization of 
$W$ and let $E = \nu^*(D)$ denote the pull-back of $D$ to $W^N$. We let $G(W^N, E)$ denote the set of non-zero rational functions on $W^N$ that are regular (actually a unit) in a neighborhood of $E$ and it is one along $E$. More precisely:
\begin{df}\label{eqn:CH-M*-1-0}
We define $G(W^N, E)$ as follows.
\begin{align*}
G(W^N, E)\coloneqq \varinjlim_{E \subset V \subset W^N} \Gamma(V, {\rm Ker}(\sO_V^{\times} \rightarrow \sO_E^{\times})) \subset k(W^N)^{\times}\\
=\{f\in k(W^{N})^{\times}\colon f\in \ker(\mathcal{O}^{\times}_{W^{N},y}\to (\mathcal{O}_{W^{N},y}/I_{E,y}\mathcal{O}_{W^{N},y})^{\times})~\text{for any}~ y\in E\},
\end{align*}
where $I_{E,y}$ is the ideal of $\mathcal{O}_{W^{N},y}$ defining $E$ at $y$. 
\end{df}
We note that if $f \in G(W^N, E) $, then ${\rm div}_{W^N}(f) \in \mathcal{Z}^1(W^N| E)$. If $w$ is the generic point of $W$, we let 
$
\partial_{w}: G(W^N, E) \to \mathcal{Z}^{p}(X| D)
$
 denote the composition of the maps 
 \begin{equation} \label{eqn:CH-M*-1-2}
  G(W^N, E) \xrightarrow{{\rm div_{W^N}}} \mathcal{Z}^1(W^N| E) \xrightarrow{\nu_*} \mathcal{Z}^{p}(X | D),
  \end{equation}
  where the second arrow is the push-forward map along the finite morphism 
  $W^N \setminus E \to U := X \setminus D$. 
We let $\mathcal{R}^{p}(X|D)$ denote the image of the induced  map
\begin{displaymath}
\oplus_{w \in U^{(p-1)}} G(W^N,E) \xrightarrow{\partial_w} \mathcal{Z}^p(X|D),
\end{displaymath}
 where $W = \overline{\{w\}} \subset X$.

\begin{df}\label{defn:CH-Modulus}
We define the Chow group $\CH^p(X|D)$ of codimension $p$ cycles with modulus  as the quotient
\begin{equation} \label{eqn:CH-M*-1-3}
\CH^p(X|D) := \frac{\mathcal{Z}^p(X|D)}{\mathcal{R}^p(X|D)}.
\end{equation}
For $p \geq 0$, we define the Chow group of dimension $p$ cycles with modulus $\CH_p(X|D)$ of 
the pair $(X, D)$ as
\begin{equation} \label{eqn:CH-M*-1-4}
\CH_p(X|D) := \CH^{d-p}(X|D),
\end{equation}
where $d$ is the pure-dimension of $X$. 
\end{df}

\vskip .4pc

\begin{rmk}
It follows from the above definition that these groups agree with Fulton's Chow groups $\CH^p(X)$ when $D =\emptyset$, i.e.,
$\CH^p(X|\emptyset) = \CH^p(X)$. Following Bloch's higher Chow groups, a general theory of the higher Chow groups with modulus $\CH^p(X|D, m)$ was introduced in \cite{Binda-Saito}. They also showed that the groups $\CH^p(X|D, 0)$ agree with the above groups $\CH^p(X|D)$ (see Theorem~3.3 of loc.cit.).
\end{rmk}

We now list the functoriality properties of the Chow groups with modulus with proper references. First we define
\begin{df}\label{df:modpair}
By a morphism $f\colon (Y, E) \to (X, D)$ of modulus pairs, we mean a morphism $f \colon Y \to X$ of smooth quasi-projective varieties such that $f^*(D)$ is a divisor on $Y$ and $f^*(D) \leq E$.
\end{df}

\begin{prop} \label{prop:functmodulus}
Chow groups with modulus satisfy the following. 
\begin{enumerate}

\item\cite[Proposition~2.10]{KP17} Let $f \colon (Y, E) \to (X, D)$ be a proper morphism of modulus pairs. Then for all $j \geq 0$, there is a natural push-forward map 
\[
f_* \colon \CH_j (Y|E) \to \CH_j (X|D)
\]
such that $(g \circ f)_* = g_* \circ f_*$, where $f$ and $g$ are composable proper morphisms of modulus pairs. 

\item\cite[Proposition~2.12]{KP17} Let $f \colon (Y, E) \to (X, D)$ be a flat morphism of relative dimension $d \geq 0$, then there is a well defined pull-back map 
\[
f^* \colon \CH_j(X,D) \to \CH_{j+d}(Y, E)
\]
when $E = f^*(D)$. Moreover, $(g \circ f)^* = f^* \circ g^*$, where $f$ and $g$ are composable flat morphisms of modulus pairs. 

\subsection{Relative Picard Group and cycles  with modulus of codimension $1$} \label{sec:Rel-Pic}
\end{enumerate}
\end{prop}

We continue with the above notations. In this section, we study the Chow group with modulus  $\CH^1(X|D)$ of codimension $1$-cycles. 
Recall that the Chow group $\CH^1(X)$ is isomorphic to the group $\Pic(X)$ of line bundles on $X$. 
We recall its relative version from 
\cite[Sec. 2]{Suslin-Voe}. Compare the results of this section with that of Section \ref{sec:APG}. 

\begin{df}\label{def:Rel-Pic}
For a line bundle $\sL$ on $X$, by a trivialization of $\sL$ along $D$, we mean an isomorphism $\phi\colon \sL_{|D} \xrightarrow{\cong} \sO_D$ that lifts to an open neighborhood of $D$, i.e., there exists an open subset $D \subset V \subset X$ and an isomorphism $\phi' \colon \sL_{|V} \xrightarrow{\cong} \sO_V$ such that 
$\phi = \phi'_{|D}$. 
We define the relative Picard group as follows. 
\begin{equation} \label{eqn:Rel-Pic*-0}
\Pic(X,D) = 
\{(\sL, \phi\colon \sL_{|D} \xrightarrow{\cong} \sO_D) | \textnormal{ $\phi$ is a trivialization of $\sL$ along $D$}\}.
\end{equation}
\end{df} 

Note that here by $(\sL, \phi)$, we mean the isomorphic class of such a pair. For $(\sL_1, \phi_1), (\sL_2, \phi_2) \in \Pic(X, D)$, we define their product as follows. 
\begin{equation} \label{eqn:prod-RPic}
(\sL_1, \phi_1)\cdot (\sL_2, \phi_2) := (\sL_1 \otimes \sL_2, \phi_1\otimes \phi_2) \in \Pic(X, D),    
\end{equation}
where $ \phi_1\otimes \phi_2 \colon (\sL_1 \otimes \sL_2)_{|D} \cong (\sL_1)_{|D} \otimes (\sL_2)_{|D} \xrightarrow{\cong} \sO_D$ is the induced isomorphism. 

For a trivialization $\phi\colon \sL_{|D} \xrightarrow{\cong} \sO_D$, we let $\phi^{\vee} \colon \sO_D \xrightarrow{\cong}  \sL^{-1}_{|D}$ denote the isomorphism induced by tensoring $\phi$ with the dual line bundle $ \sL^{-1}_{|D}$. 
With the product structure \eqref{eqn:prod-RPic}, $\Pic(X,D)$ forms a group with identity $(\sO_X, \id \colon (\sO_X)_{|D} \xrightarrow{\cong} \sO_D)$ and $(\sL, \phi)^{-1}= (\sL^{-1}, (\phi^{\vee})^{-1})$. 

Let $(\sL, \phi) \in \Pic(X, D)$ and let $D \subset V_1, V_2\subset X$ be are two open neighborhoods of $D$ with isomorphisms 
$\phi_i' \colon \sL_{|V_i} \xrightarrow{\cong} \sO_{V_i}$ such that 
$\phi_i = (\phi_i')_{|D}$. Then $V = V_1 \cap V_2$ is an open neighbourhood of $D$ and $s_i = (\phi_i')_{|V}^{-1}$ are rational sections of $\sL$ that are regular on $V$ and $(s_1)_{|D} = (s_2)_{|D}$. Since two (regular) sections of a line bundle differ by a rational (resp. regular) function, there exists $f\in \sO_X(V) \subset k(X)^{\times}$ such that $s_1 = f s_2$ and $f \in G(X, D)$. It therefore follows that 
\begin{equation} \label{eqn:RPic-div}
\Div(s_1) = \Div(s_2) \in \CH^1(X|D).
\end{equation}
In particular, the following definition makes sense. 
\begin{df} \label{def:c1-M}
Given $(\sL, \phi) \in \Pic(X, D)$, we define its Chern class with modulus as follows. 
\begin{equation} \label{eqn:c1-M*-1}
c_1(\sL, \phi) := \Div((\phi')^{-1}) \in \CH^1(X|D), 
\end{equation}
where $\phi' \colon \sL_{|V} \xrightarrow{\cong} \sO_V$ is an isomorphism for an open neighbourhood $V$ of $D$  such that 
$\phi = \phi'_{|D}$. 
\end{df}

\begin{prop} \label{prop:c1-M-Iso}
    The assignment $(\sL, \phi) \mapsto c_1(\sL, \phi)$ defines a group isomorphism
\begin{equation} \label{eqn:c1-M*-2}
c_1\colon \Pic(X, D) \xrightarrow{\cong} \CH^1(X|D).
\end{equation}
\end{prop}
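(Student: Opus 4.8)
The plan is to construct an explicit inverse to $c_1$, mirroring the construction of $\cyc_X$ in Proposition~\ref{prop:A-Chern-class-3} but in the relative setting, and then check the two composites are the identity. First I would show $c_1$ is a well-defined homomorphism: well-definedness on isomorphism classes is clear, independence of the choice of extension $\phi'$ is exactly \eqref{eqn:RPic-div}, and multiplicativity follows because if $\phi_i' \colon \sL_i{}_{|V_i} \xrightarrow{\cong} \sO_{V_i}$ extend $\phi_i$, then $\phi_1'\otimes \phi_2'$ extends $\phi_1\otimes\phi_2$ on $V_1\cap V_2$ and $\Div((\phi_1'\otimes\phi_2')^{-1}) = \Div((\phi_1')^{-1}) + \Div((\phi_2')^{-1})$ as cycles, hence in $\CH^1(X|D)$.

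Next I would build the inverse. Given a cycle $Z \in \caZ^1(X|D)$ with $Z \cap D = \emptyset$, write $Z$ as a Cartier divisor $(U_\alpha, f_\alpha)$ with $g_{\alpha\beta} = f_\alpha/f_\beta \in \sO_{U_{\alpha\beta}}^\times$, and form the line bundle $\sO_X(Z) = (U_\alpha, g_{\alpha\beta})$ together with its canonical rational section $s_Z = (U_\alpha, f_\alpha)$, whose divisor is $Z$. Since $Z$ does not meet $D$, after refining the cover we may assume the open sets meeting $D$ are exactly those on which $f_\alpha$ is a unit; thus $s_Z$ is a nowhere-vanishing regular section in a neighborhood $V$ of $D$, giving a trivialization $\phi_Z' \colon \sO_X(Z)_{|V} \xrightarrow{\cong} \sO_V$ by $s_Z \mapsto 1$, and we set $\phi_Z = (\phi_Z')_{|D}$. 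The assignment $Z \mapsto (\sO_X(Z), \phi_Z)$ must then be checked to kill $\caR^1(X|D)$: for $f \in G(X,D)$ (the relevant generators in codimension one come from $W = X$ itself, using $X^N = X$ since $X$ is smooth), $\Div(f)$ gives the trivial line bundle $\sO_X$, and the section $f^{-1}$ differs from the tautological section $1$ by $f$, which restricts to $1$ along $D$ by definition of $G(X,D)$; hence the induced trivialization agrees with $\id$ and we get the identity class. For higher-codimension $W \subsetneq X$ the map $\partial_w$ lands in cycles not meeting $D$ but the associated line bundle is principal near $D$ via a function restricting to $1$ on $D$, so the same argument applies. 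This yields a homomorphism $\mathrm{cyc}\colon \CH^1(X|D) \to \Pic(X,D)$.

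Finally I would verify $\mathrm{cyc}\circ c_1 = \Id$ and $c_1\circ \mathrm{cyc} = \Id$ by unwinding definitions. For $c_1\circ \mathrm{cyc}$: starting from $(\sL,\phi)$, picking a rational section $s$ of $\sL$ that is regular and nonvanishing near $D$ with $\phi = $ ``$s\mapsto 1$'' on $D$, we have $\Div(s) \in \caZ^1(X|D)$, and $\sO_X(\Div(s)) \cong \sL$ canonically via $s$, under which $\phi_{\Div(s)}$ corresponds to $\phi$; so the composite returns $(\sL,\phi)$. For $\mathrm{cyc}\circ c_1$: starting from $Z$, the divisor of the tautological section of $\sO_X(Z)$ is $Z$ and the induced trivialization along $D$ is the one we used to define $\phi_Z$, so we recover the class of $Z$ in $\CH^1(X|D)$; independence of choices was already handled. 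I expect the main obstacle to be the bookkeeping around trivializations near $D$ versus \emph{on} $D$ — namely checking that two rational sections regular near $D$ and agreeing on $D$ differ by a function in $G(X,D)$ (which is exactly the content already extracted in the paragraph preceding \eqref{eqn:RPic-div}), and conversely that a function in $G(X,D)$ does not change the trivialization class on $D$. Everything else is the standard Cartier-divisor/line-bundle dictionary, now carried through the relative equivalence relation.
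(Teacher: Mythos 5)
Your proposal is correct and follows essentially the same route as the paper: both construct the explicit inverse $\cyc_{X|D}$ by sending a cycle $Z$ with $Z\cap D=\emptyset$ to the pair $(\sO_X(Z),\phi)$, where $\phi$ is the trivialization along $D$ induced by the tautological rational section (regular and nonvanishing near $D$), and then check the composites via the Cartier-divisor dictionary and the observation preceding \eqref{eqn:RPic-div}. The only superfluous remark is your aside about ``higher-codimension $W\subsetneq X$'': for $p=1$ the generators of $\caR^1(X|D)$ come only from codimension-$0$ subvarieties, so $W=X$ is the only case needed.
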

\begin{proof}
It follows from a easy calculation that the assignment $(\sL, \phi) \mapsto c_1(\sL, \phi)$ defines a group homomorphism 
$c_1\colon \Pic(X, D) \to \CH^1(X|D)$. We define the inverse map $\cyc_{X|D} \colon \CH^1(X|D) \to \Pic(X, D)$ that we call the cycle class map with modulus.

Let $Z\in \mathcal{Z}^1(X|D)$. Since $X$ is a smooth variety, the Weil divisors on $X$ are the same as the Cartier divisor on $X$. 
Let $  (U_{\alpha}, f_{\alpha})$ be a Cartier divisor for $Z$ (i.e., 
$Z \cap U_{\alpha} = \Div(f_{\alpha})$).
 Let $\sO(U_{\alpha}, f_{\alpha})$ be a line bundle in the isomorphic class $(U_{\alpha}, g_{\alpha} = f_{\alpha}/f_{\beta})$ of line bundles on $X$. 
 If we let $U = X \setminus |Z|$, then $f_{\alpha} \in \sO(U \cap U_{\alpha})^{\times}$ and they patch up to yield a section $s \in \Gamma(U, \sO(U_{\alpha}, f_{\alpha}))$ such that $s \colon \sO_{U}\xrightarrow{\cong} \sO(U_{\alpha}, f_{\alpha}))_{|U}$. We let $\phi=(s_{|D})^{-1} \colon \sO(U_{\alpha}, f_{\alpha}))_{|D} \xrightarrow{\cong} \sO_{D}$ be the induced trivialisation of the line bundle  $\sO(U_{\alpha}, f_{\alpha})$. We therefore have an element 
$( \sO(U_{\alpha}, f_{\alpha}), \phi=(s_{|D})^{-1} ) \in {\Pic}(X, D)$. Moreover, one can show that the above element does not depend on the choice of the Cartier divisor associated with $Z$ and yields a well-defined group homomorphism 
\begin{equation} \label{eqn:c1-M-Iso*-1}
\cyc_{X|D} \colon  \CH^1(X|D) \to  \Pic(X, D)
\end{equation}
such that $\cyc_{X|D} (Z) =  (\sO(U_{\alpha}, f_{\alpha}), \phi=
(s_{|D})^{-1} ) \in {\Pic}(X, D)$ for $Z \in \mathcal{Z}^1(X|D)$. It follows from the definition of the maps that $\cyc_{X|D}$ is the inverse of the map in \eqref{eqn:c1-M*-2}. We skip the details for these checks from here because we write the details later on in a more general setting (see the proof of Theorem \ref{thm:Chern-Iso}). 
\end{proof}

\subsection{Action of Chow ring} \label{sec:Action-CR}
We recall the following theorem from \cite[Theorem~1.1]{KP17} and sketch a proof. 

\begin{thm} \label{thm:Prod-CGM}
    Let $X$ be a smooth quasi-projective variety over a field $k$ and let $D$ be an effective Cartier divisor on $X$. Then we have a well-defined cap product 
    \begin{equation} \label{eqn:Prod-cycle*-0}
        \cap \colon \CH^p(X) \times \CH^q(X|D) \to \CH^{p+q}(X|D)
    \end{equation}
    so that $\CH^*(X|D)$ becomes module over the ring $\CH^*(X)$. 
\end{thm}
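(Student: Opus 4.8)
The strategy is to reduce the construction of the cap product to the well-understood intersection theory on the open complement $U = X \setminus D$, combined with a careful analysis of how the modulus condition propagates. First I would recall that for $\alpha \in \CH^p(X)$ and a cycle $Z \in \caZ^q(X|D)$, the support $|Z|$ is contained in $U$, and since $X$ is smooth we may use Fulton's intersection product: after moving $\alpha$ (Chow's moving lemma on the smooth quasi-projective variety $X$) to a cycle $\alpha'$ meeting $|Z|$ properly, the intersection $\alpha' \cdot Z$ is a well-defined cycle of codimension $p+q$ all of whose components still lie in $U$, hence defines an element of $\caZ^{p+q}(X|D)$. The heart of the matter is to check that this operation descends to rational equivalence classes in a way that respects the modulus condition on \emph{both} factors.

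\textbf{Key steps, in order.} (i) Fix representatives and reduce, via Chow's moving lemma applied to the $\CH^p(X)$-factor only, to the case where $\alpha$ is represented by a cycle meeting every component of $|Z|$ (and, after a further application, every component of any modulus-rational-equivalence cycle that will appear) properly; the point is that the moving lemma is applied to the \emph{classical} factor, which carries no modulus constraint. (ii) Well-definedness in the first variable: if $\alpha \sim 0$ in $\CH^p(X)$, write $\alpha = \sum \partial_w(g_w)$ with $g_w \in k(W)^\times$; then $\alpha \cdot Z$ is a sum of $\partial_{w'}$ of pullbacks of the $g_w$ to the normalizations of the components $W'$ of $W \cap |Z|$-type loci, and since these lie in $U$ (away from $D$) the pulled-back functions automatically lie in $G((W')^N, E')$ — indeed being a unit congruent to $1$ along $E$ is vacuous when the relevant subvariety misses $D$ entirely, so in fact \emph{any} rational equivalence on the classical side produces a valid modulus-rational equivalence on $\CH^{p+q}(X|D)$. (iii) Well-definedness in the second variable: if $Z = \partial_w(g)$ with $g \in G(W^N, E)$, I would use the projection formula for proper pushforward along $\nu\colon W^N \to X$ together with compatibility of Fulton's product with flat/proper operations (\cite[Ch.~8]{Fulton}, or the relative statements in \cite{KP17}) to rewrite $\alpha \cdot \partial_w(g)$ as $\partial$ of a rational function obtained by pulling $g$ back along the induced finite map on normalizations of components of the intersection; the crucial verification is that this pulled-back function still satisfies the modulus condition along the pullback of $D$, which follows because the modulus condition is preserved under pullback along dominant maps of normal varieties (the ideal of $E$ pulls into the ideal of the pullback of $E$, and a unit congruent to $1$ remains so). (iv) Bilinearity, associativity over $\CH^*(X)$, and the unit axiom are then formal consequences of the corresponding properties of Fulton's product, checked on representatives.

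\textbf{Main obstacle.} The delicate point — and the reason this is a genuine theorem rather than a triviality — is step (iii): controlling the modulus condition after intersecting with a moving cycle $\alpha'$. When we intersect $Z = \partial_w(g)$ with $\alpha'$, the resulting function is a norm/pullback of $g$ along the finite map from the normalization of (a component of) $W^N \times_X \alpha'$ to $W^N$, and one must verify that $E$-regularity and congruence-to-$1$ survive this operation; this is exactly the content of the pullback and norm compatibilities for the groups $G(W^N,E)$ established in \cite[\S 2]{KP17} (cf. their Propositions~2.10 and 2.12), so I would cite those rather than reprove them. A secondary subtlety is ensuring the moving lemma can be arranged to make the intersection proper \emph{simultaneously} with $|Z|$ and with the supports appearing in any chosen modulus-rational-equivalence witnessing $Z \sim 0$; this is handled by the standard observation that one only needs finitely many proper-intersection conditions at a time and Chow's moving lemma accommodates any finite family of closed subsets.
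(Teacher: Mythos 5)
Your overall architecture matches the paper's: move the classical factor by Chow's moving lemma, observe that proper intersection with a modulus cycle lands in $\caZ^{p+q}(X|D)$, note that well-definedness in the $\CH^p(X)$-variable is essentially free because every component of the intersection misses $D$ (so the modulus condition on the resulting $K_1$-chain is vacuous), and isolate the second variable as the delicate point. Steps (i), (ii) and (iv) are fine, modulo the paper's extra care in (ii) with excess components supported on $T$, handled via the $K_1$-chain moving lemma of Gillet--Soul\'e and the groups $\CH^{p+q,p+q-1}_T(X)$.

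The genuine gap is in your justification of step (iii). After moving, the operation that actually occurs is the \emph{restriction} of $f \in G(W^N,E)$ to the components $Z_l$ of $\alpha_0 \cap W$, i.e.\ to proper closed subvarieties $Z_l \subsetneq W$ with $Z_l \not\subset D$. Your stated mechanism --- ``the modulus condition is preserved under pullback along dominant maps of normal varieties'' --- does not apply here: the inclusion $Z_l \hookrightarrow W$ is not dominant, and the normalization $Z_l^N$ does \emph{not} factor through $W^N$ (the universal property of normalization only lifts dominant maps from normal varieties). Consequently the pullback of $D$ to $Z_l^N$ cannot be compared with the pullback of $I_E$ from $W^N$ by the naive argument you sketch, and this is exactly why the statement is a nontrivial proposition in the modulus literature. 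The correct input is the containment lemma, \cite[Proposition~2.4]{KP17} (proved in \cite[Lemmas~2.1 and 2.2]{KP12}), which asserts that a function satisfying the modulus condition on $W$ restricts to one satisfying it on any subvariety $Z_l \subset W$ with $Z_l \not\subset D$; the results you cite (Propositions~2.10 and 2.12 of \cite{KP17}) concern flat pullback and proper pushforward and do not cover this. A secondary omission: to even apply the containment lemma one must arrange, in the moving step, that $\alpha_0$ meets $W$ \emph{and} $W \cap D$ properly, so that no component $Z_l$ is contained in $D$; your simultaneity remark covers $|Z|$ and the rational-equivalence supports but not $W \cap D$.
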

    We sketch a proof here because in \cite{KP17}, a more general statement is proven. 
    Recall that we say two cycles $\alpha = \sum_l n_l [Y_l] \in \mathcal{Z}^{p}(X)$ and $\beta = \sum_t m_t [Z_t] \in \mathcal{Z}^{q}(X)$ intersect properly if for each $l, t$, the intersection $Y_l \cap Z_t$ is either empty or every irreducible component of $Y_l \cap Z_t$ is of codimension $p+q$. 
    In this case, we have a well-defined cycle 
    $\alpha\cap\beta = \sum_{l, t} n_l m_t [Y_l\cap Z_t] \in  \mathcal{Z}^{p+q}(X)$. Moreover, if $\beta \in \mathcal{Z}^{q}(X|D)$, then $\alpha\cap\beta \in  \mathcal{Z}^{p+q}(X|D)$ because 
    $Y_l \cap Z_t \cap D \subset Z_t \cap D = \emptyset$ for all $l, t$. The following lemma is key to proving Theorem~\ref{thm:Prod-CGM}.

    \begin{lem}\label{lem:Prod-cycle-2}
    Let $\alpha \in \mathcal{R}^p(X)$ and 
    $\beta \in \mathcal{Z}^{q}(X|D)$ be such that they intersects properly. Then $\alpha \cap \beta \in  \mathcal{R}^{p+q}(X|D)$. 
    \end{lem}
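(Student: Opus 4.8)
The statement is the analogue for Chow groups with modulus of the classical compatibility between rational equivalence and proper intersection. The plan is to reduce to a single generator of $\mathcal{R}^p(X)$ and then push the relation along a normalization map, checking that the modulus condition is preserved throughout. So first I would take $\alpha = \partial_w(f) = \nu_*({\rm div}_{W^N}(f))$ for a codimension $(p-1)$ subvariety $W = \overline{\{w\}}$ with $W \not\subset$ any bad locus, and $f \in k(W^N)^\times$ (here $E = \emptyset$ since $D$ plays no role for $\alpha \in \mathcal{R}^p(X)$, so $f$ is an arbitrary nonzero rational function on $W^N$), and $\beta = \sum_t m_t[Z_t] \in \mathcal{Z}^q(X|D)$ with $\alpha$ and $\beta$ intersecting properly. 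By linearity in $\beta$ it is enough to treat $\beta = [Z]$ for a single codimension $q$ subvariety $Z$ with $Z \cap D = \emptyset$, and with $W \cap Z$ of pure codimension $p+q-1$ (so that ${\rm div}(f)\cap Z$ has the expected codimension $p+q$).

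\textbf{Key steps.} The heart of the argument is a projection-formula computation. Consider the fiber product $W^N \times_X Z$, or rather the subvariety $V = (W \cap Z)^N$ — more precisely one works with the irreducible components of $W \times_X Z$ and their normalizations — together with the natural finite map $\mu\colon V \to X$ factoring through both $W^N$ and $Z$. The function $f$ pulls back to a rational function $f|_V \in k(V)^\times$ (one must check it is not identically zero on any relevant component, which follows from properness of the intersection, i.e. $|{\rm div}(f)|$ does not contain any component of $W\cap Z$). The classical statement that $\alpha \cap \beta = \nu_*({\rm div}_{W^N}(f)) \cap [Z]$ equals (up to rational equivalence computed on $V$) $\mu_*({\rm div}_V(f|_V))$ is exactly Fulton-style intersection theory — compatibility of divisors with proper pushforward and the projection formula $\nu_*({\rm div}(f) \cdot \nu^*[\text{something}]) = {\rm div}(f)\cdot(\text{that something})$ reinterpreted. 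The upshot is that $\alpha \cap \beta$ is, as a cycle, of the form $\sum_j \mu_{j*}({\rm div}_{V_j}(g_j))$ for finitely many codimension $(p+q-1)$ subvarieties $V_j$ of $X$ (images of components of $W\cap Z$), with $g_j \in k(V_j)^\times$, and with $\mu_j\colon V_j^N \to X$ the normalization maps. Then the crucial modulus check: each $V_j \not\subset D$ (since $V_j \subset Z$ and $Z \cap D = \emptyset$, in fact $V_j \cap D = \emptyset$), so the pullback $E_j = \mu_j^*(D) = \emptyset$, whence trivially $g_j \in G(V_j^N, E_j) = k(V_j^N)^\times$ and ${\rm div}_{V_j^N}(g_j) \in \mathcal{Z}^1(V_j^N|E_j)$. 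Therefore $\mu_{j*}({\rm div}_{V_j^N}(g_j)) = \partial_{v_j}(g_j) \in \mathcal{R}^{p+q}(X|D)$ by the very definition of $\mathcal{R}^{p+q}(X|D)$, where $v_j$ is the generic point of $V_j$. Summing over $j$ gives $\alpha \cap \beta \in \mathcal{R}^{p+q}(X|D)$.

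\textbf{Main obstacle.} The routine-looking but genuinely delicate point is the first displayed reduction: identifying the cycle-theoretic product $\nu_*({\rm div}_{W^N}(f)) \cap [Z]$ with $\sum_j \mu_{j*}({\rm div}_{V_j^N}(g_j))$ \emph{on the nose as cycles}, not merely up to rational equivalence on $X$. Since we are working at the level of cycle groups $\mathcal{Z}^\bullet$ and not Chow groups when we assert membership in $\mathcal{R}^{p+q}(X|D)$, one has to be careful: the correct statement is that under the properness hypothesis, the refined intersection ${\rm div}_{W^N}(f)\cdot Z$ (computed, say, via the deformation to the normal cone or via Serre's Tor-formula, but here simplifying because $Z$ is not required to be moved) is represented by an honest cycle, and that this cycle, pushed to $X$, is literally $\sum_j \mu_{j*}({\rm div}(g_j))$ where $g_j$ is the image of $f$ under $k(W^N)^\times \to k(V_j^N)^\times$. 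This is essentially \cite[Prop.~2.3 and its proof]{KP17} combined with the standard compatibility of ${\rm div}$ with finite pushforward (e.g. Fulton, \emph{Intersection Theory}, Prop.~1.4); I would cite these rather than reprove them, and spend the written proof making precise the bookkeeping of components and the vanishing $E_j = \emptyset$, which is what makes the modulus condition automatic once the intersection avoids $D$. A secondary, purely notational nuisance is handling the multiplicities $n_l$ and the possibility that several components $V_j$ map to the same subvariety of $X$; these are absorbed harmlessly into the $\mathbb{Z}$-linear structure of $\mathcal{R}^{p+q}(X|D)$.
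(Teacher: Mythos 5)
Your overall strategy is the same as the paper's, and you have correctly isolated the point that makes the lemma true: whatever $K_1$-chain represents $\alpha\cap\beta$ is supported inside $Z$, hence disjoint from $D$, so the modulus condition $g_j\in G(V_j^N,E_j)$ is vacuous because $E_j=\emptyset$. That final step matches the paper exactly. However, there are two genuine gaps in the reductions that precede it.

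First, you pass from an arbitrary $\alpha\in\mathcal{R}^p(X)$ to a single generator $\partial_w(f)=\nu_*(\Div_{W^N}(f))$ \emph{that still intersects $\beta$ properly}, without justification. An element of $\mathcal{R}^p(X)$ is a sum $\sum_l m_l\Div(h_l)$, and because of cancellation among the $\Div(h_l)$ the support of $\alpha$ can be strictly smaller than the union of the supports of the $\Div(h_l)$; properness of $\alpha\cap\beta$ therefore does not give properness of each $\Div(h_l)\cap\beta$. The paper's first move is precisely to repair this by invoking the moving lemma for $K_1$-chains (\cite[Lemma~4.2.6]{GS90}) to choose a representation in which each $\Div(h_l)$ meets $\beta$ properly; your proof needs the same input.

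Second, you assume that $W\cap Z$ is of pure codimension $p+q-1$, and this is not implied by the hypothesis: properness is a condition on $|\Div(f)|\cap Z$, not on $W\cap Z$, which may have excess-dimensional components. The paper handles this by splitting $W\cap Z=S\cup T$ into the proper part $S$ and the excess part $T$, observing that properness forces $|\Div(h)|\cap T=\emptyset$ so that $h|_T$ is a unit, and then using the Fulton decomposition $[W]\cdot[Z]=\sum_l m_l S_l+\gamma$ with $\gamma$ supported on $T$ (\cite[Prop.~8.1.1, \S 8.2]{Fulton}) together with \cite[Lemma~4.2.5(1)]{GS90} to identify $\alpha\cap\beta$ with $\Div(h.Z)$ for an explicit $K_1$-chain $h.Z=\sum_l m_l[h|_{S_l}]+[h]\cdot\gamma$. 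Your ``on the nose as cycles'' identification $\alpha\cap\beta=\sum_j\mu_{j*}(\Div_{V_j^N}(g_j))$ is the right kind of statement, but as written it only sees the proper-dimensional components and silently discards the excess ones; the conclusion survives (the unit $h|_{T_t}$ has trivial divisor and trivially satisfies the modulus, as the paper notes), but the argument must track these components to legitimately apply the $\Div(h\cdot Z)=\Div(h)\cdot Z$ compatibility. With the moving lemma for $K_1$-chains added and the excess components treated as above, your proof becomes essentially the paper's.
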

    \begin{proof}
     By the moving lemma for $K_1$-chains \cite[Lemma~4.2.6]{GS90}, we can find finitely many closed subvarieties $W_l$ of $X$ and rational functions $h_l$ on $W_l$ such that 
        $\alpha= \sum_l m_l {\Div(h_l)}$ and that each $\Div(h_l)$ intersects with $\beta$ properly. 
       We can therefore assume that $\alpha = \Div(h)$, where $h\in k(W)^{\times}$ for a closed subvariety $W$ of codimension $p-1$ and $\beta = [Z]$, where $Z$ is a closed subscheme of $X$ that interests with each irreducible components of $\Div(h)$ properly. 
        
        Let $W \cap Z = S \cap T$, where $S$ is the union of irreducible components of codimension $p+q-1$ and $T$ is a union of all other irreducible components. Since $\Div(h)$ intersects with $Z$ properly, $\Div(h) \cap T = \emptyset$. In other words, $h|_{T}$ is a regular function on $T$ that is a unit on $T$. By \cite[Propositions~8.1.1 and 8.2]{Fulton} (also see \cite[Page~141]{GS90}), we know that 
    $[W] \cdot [Z] = \sum_l m_l S_l + \gamma \in \CH^{p+q-1}(X)$, where $S_l$ are irreducible components of $S$ and $\gamma \in \CH_T^{p+q-1}(X)$. Let 
    $\gamma$ is represented by an element $ \sum_t n_t [T_t] \in
    \mathcal{Z}^{p+q-1}_T(X)$. Since $h|_T$ is a unit, we have an element 
    \begin{equation}\label{eqn:Prod-cycle*-6}
    [h] \cdot \sum_t n_t [T_t] := \sum_t n_t [h|_{T_t}] \in {\rm Ker}(\oplus_{x\in X^{(p+q-1)}\cap T} k(x)^{\times} \to \oplus_{x\in X^{(p+q)}\cap T} \mathbb{Z}).
    \end{equation}
    We let $[h]\cdot \gamma$ be the image of the above element in
    \begin{displaymath}
    \CH^{p+q, p+q-1}_T(X) = \frac{{\rm Ker}(\oplus_{x\in X^{(p+q-1)}\cap T} k(x)^{\times} \to \oplus_{x\in X^{(p+q)}\cap T} \mathbb{Z})}{{\rm Im}(\oplus_{x\in X^{(p+q-2)}\cap T} K_2(k(x)) \to \oplus_{x\in X^{(p+q-1)}\cap T} k(x)^{\times})},
    \end{displaymath}
    and define
    \begin{equation}\label{eqn:Prod-cycle*-7}
    h. Z = \sum_{l}  m_l [h|_{S_{l}}] + [h]\cdot \gamma \in \CH^{p+q, p+q-1}(X). 
    \end{equation}
    Note that the above element $h.Z$ lies in the image of the group 
    $\CH^{p+q, p+q-1}_Z(X)$ under the forget support map. 
    It then follows from \cite[Lemma~4.2.5(1)]{GS90} that 
     \begin{equation}\label{eqn:Prod-cycle*-8}
     \alpha \cdot \beta =  \Div(h) \cdot [Z] = \Div (h. Z). 
    \end{equation}
   By the definition of $\mathcal{R}^*(X|D)$, it 
    suffices to show that the element $h.Z$ is represented by an element of 
   $\oplus_{x\in X^{(p+q-1)}} G(Y^N, E) \subset \oplus_{x\in X^{(p+q-1)}} \subset k(x)^{\times}$, where $Y$ is the closure of $x$ in $X$. But this follows because $h.Z$ is represented by 
   the element $\sum_{l}  m_l [h|_{S_{l}}] +  \sum_t n_t [h|_{T_t}] $ and for each $l, t$, we have  
   $S_l \cap D \subset Z\cap D = \emptyset$ and $T_t \cap D\subset T \cap D \subset Z \cap D = \emptyset$. This completes the proof. 
    \end{proof}

\subsection{Proof of Theorem~\ref{thm:Prod-CGM}}

 Let $\beta \in \mathcal{Z}^q(X|D)$ and let $\ov{\alpha} \in \CH^p(X)$. By Chow's moving lemma \cite[\S~3, Theorem]{Roberts}, 
    we can find a cycle $\alpha_0 \in \mathcal{Z}^p(X)$ such that $\ov{\alpha} = \ov{\alpha_0}$ and $\alpha_0$ intersects with $\beta$ properly. In particular,  we have a well-defined element 
    $\alpha_0 \cap \beta \in \mathcal{Z}^{p+q}(X|D)$. 
    
    We define the cap 
    product $\ov{\alpha} \cap \beta \in \CH^{p+q}(X|D)$ to be the image of the cycle 
    $\alpha_0 \cap \beta$. We first note that the element $\ov{\alpha} \cap \beta \in \CH^{p+q}(X|D)$ does not depend on the choice of $\alpha_0$. Indeed, if $\alpha_0'$ is another cycle so that $\ov{\alpha} = \ov{\alpha_0'}$ and $\beta$ interests with $\alpha_0$ properly, then $\alpha_0 - \alpha_0' = \gamma \in \mathcal{R}^p(X)$ 
    and  $\beta$ intersects with $\gamma$ properly. By Lemma \ref{lem:Prod-cycle-2}, we have 
    \begin{equation}\label{eqn:Prod-cycle*-9}
       \alpha_0 \cap \beta - \alpha_0' \cap \beta =  (\alpha_0 - \alpha_0') \cap \beta = \gamma \cap \beta \in \mathcal{R}^{p+q}(X|D).
    \end{equation}
     This proves that the cycle class $\ov{\alpha} \cap \beta \in \CH^{p+q}(X|D)$ is well defined. 
     Note that for all $\beta \in \mathcal{Z}^q(X|D)$, we now have a well-defined group homomorphism $- \cap \beta \colon \CH^p(X) \to \CH^{p+q}(X|D)$. Since the intersection product of cycles is linear in both coordinates, we get a bilinear pairing 
    $\cap \colon \CH^p(X) \times \mathcal{Z}^q(X|D) \to \CH^{p+q}(X|D)$. We now show that $\CH^p(X) \cap \mathcal{R}^q(X|D) = 0 \in \CH^{p+q}(X|D)$. 

    Let $\beta = \Div(f) \in \mathcal{R}^q(X|D)$, where $f$ is a rational function on a integral closed subscheme $W$ and let $\ov{\alpha} \in \CH^p(X)$. We need to show that $\ov{\alpha} \cap \beta = 0 \in \CH^{p+q}(X|D)$. By Chow's moving lemma, we can find a cycle $\alpha_0  \in \mathcal{Z}^p(X)$ such that $\ov{\alpha_0} = \ov{\alpha} \in \CH^p(X)$, and it intersects with $W$, $W \cap D$ and support of $\Div(f)$ properly. Let $\alpha_0 \cap [W]= \sum_l m_l [Z_l] \in \mathcal{Z}^{p+q-1}(X)$.  We then have
    \begin{equation} \label{eqn:Prod-cycle*-10}
        \ov{\alpha} \cap \beta = \alpha_0 \cap \Div(f)= \sum_l m_l \Div(f|_{Z_l}) \in \CH^{p+q}(X|D). 
    \end{equation}
    It therefore suffices to show that $f|_{Z_l}$ satisfy modulus. 
    But this follows from the containment lemma \cite[Proposition~2.4]{KP17} (see the proof of \cite[Lemmas~2.1 and 2.2]{KP12}) as $Z_l \subset W$, $Z_l \not\subset D$ and $f$ satisfies modulus $D$ on $W$. 

    We now have a well-defined pairing $\cap \colon \CH^p(X) \times \CH^q(X|D) \to \CH^{p+q}(X|D)$. The claim that it defines an action of $\CH^*(X)$ on $\CH^*(X|D)$ follows from the associativity of the intersection of cycles (if they intersect properly). This completes the proof of the theorem.

\section{Arithmetic Chow group with modulus} \label{sec*:ACH-M}

In this section, we attach a Green current $g_{Z}$ to a cycle $Z$ with modulus such that $dd^{c}g_{Z}+\delta_{Z}$ vanishes on $D$. Consequently, we define an arithmetic Chow group with modulus.

Let $k = (k, \Sigma, F_{\infty})$ be an arithmetic field and let $f \colon X \to \Spec(k)$ be a smooth quasi-projective $k$-scheme of dimension $d \geq 0$. We let $D \hookrightarrow X$ be an effective Cartier divisor such that $D_{{\rm red}}=\Sigma_{i}D_{i}$ is a simple normal divisor on $X$. We let $D_{\infty}\coloneqq D_{{\rm red}, \Sigma}(\C)$, and other notations be as in Sections~\ref{sec:Rec-ACG} and \ref{sec:CH-M}. 
Recall from Section~\ref{sec:CH-M} that 
 $\mathcal{Z}^{p}(X|D)$ denote the free abelian group generated by those codimension $p$ irreducible closed subsets of $X$ that do not meet $D$. Recall from \eqref{eqn:R-F-int-Forms*-1} that $A^{p,p}(X_{\R})$ is the space of real forms $\omega$ satisfying $F_{\infty}^{*}(\omega) = (-1)^p \omega$ and from Lemma \ref{lem:Surj*-2}, $\Sigma_{N}A^{\ast}_{M}$ is the space of smooth forms on a complex manifold $M$ that vanishes on a simple normal crossing divisor $N$. Let
\begin{equation} \label{eqn:real-form-M}
A^{p,p}(X_{\R}|D)  := \{\omega \in A^{p,p}(X_{\R})~|~\omega\in \Sigma_{D_{\infty}}A^{p,p}_{X_{\infty}}\}.
\end{equation}
We summarize the above condition by $\omega_{|D} = 0$ and call such forms as smooth real forms on $X$ that vanish along $D$.

Much like the usual algebraic cycles we show that for a cycle with modulus, there always exists a Green current with the vanishing condition along $D$ for the corresponding smooth form.

\begin{thm} \label{thm:Surj}
    Assume that $X$ is a smooth quasi-projective $k$-scheme. Then for a cycle $Z\in \caZ^{p}(X|D)$, there always exists Green current $g_{Z}$ such that $dd^{c}g_{Z}+\delta_{Z}\in A^{p,p}(X_{\R}|D)$.
\end{thm}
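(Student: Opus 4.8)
The plan is to start from the classical Gillet--Soul\'e existence theorem (\cite[Theorem~1.2.1]{GS90}), which already furnishes \emph{some} Green current $g'_Z$ with $dd^c g'_Z + \delta_Z = [\omega'_Z]$ for a smooth form $\omega'_Z \in A^{p,p}(X_\infty)$, and to correct $\omega'_Z$ so that it vanishes along $D_\infty$ without destroying the equation. The key input is Lemma~\ref{lem:Surj*-2}: the complex $\Sigma_{D_\infty} A^{p,\ast}_{X_\infty}$ is quasi-isomorphic to the simple complex $K^{p,\ast}_{X_\infty, D_\infty}$, and hence (via Remark~\ref{rmk-non-reduced}) its cohomology computes $H^\ast(X_\infty, D_\infty)$. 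Since $Z$ does not meet $D$, the class $\{\omega'_Z\}$, which equals the image of the ordinary cycle class of $Z$ in $H^{2p}(X_\infty)$, actually lifts to the relative cohomology group $H^{2p}(X_\infty, D_\infty)$ — this is precisely the cycle class with modulus $cl_{p|D}(Z)$ discussed in the introduction. Concretely, $\omega'_Z|_{D_\infty}$ is a closed form on the normal crossing manifold $D_\infty$ which is exact there, coming from the ambient class.

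The main step is then: \emph{produce a smooth form $\eta \in A^{p,p}(X_\infty)$ with $\eta|_{D_\infty} = \omega'_Z|_{D_\infty}$ and $dd^c\eta = 0$.} First I would use the Extension Lemma (Lemma~\ref{lem:Ext-Lem-I}, or rather its sncd-generalization Lemma~\ref{lem:Surj*-2}, whose exactness at $A^{p,q}(X_\infty)\to\bigoplus A^{p,q}(N_I)$ is the surjectivity we need) to extend $\omega'_Z|_{D_\infty}$ to \emph{some} form $\eta_0$ on $X_\infty$; then $\omega'_Z - \eta_0$ vanishes along $D_\infty$. The issue is that $\eta_0$ need not satisfy $dd^c\eta_0 = 0$. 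To fix this: since the class of $\omega'_Z$ lies in the image of $H^{2p}(X_\infty, D_\infty)\to H^{2p}(X_\infty)$, one can choose the extension within the quasi-isomorphic model $\Sigma_{D_\infty}A^{p,\ast}$ so that the resulting form is $d$-closed; one then runs a $\partial\bar\partial$-type argument (the $dd^c$-lemma on the compact K\"ahler — or at least the global — pieces, together with the fact that $K^{p,\ast}_{X_\infty,D_\infty}$ computes the relative Hodge-theoretic cohomology) to replace it by a representative $\eta$ with $dd^c\eta = 0$ whose restriction to $D_\infty$ still differs from $\omega'_Z|_{D_\infty}$ by something $dd^c$-exact \emph{on $D_\infty$}, which can be absorbed. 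Setting $\omega_Z := \omega'_Z - \eta$ gives $\omega_Z \in \Sigma_{D_\infty}A^{p,p}_{X_\infty}$, while $g_Z := g'_Z - \{\text{a current with } dd^c = -[\eta]\}$; more simply, if $\eta = dd^c u$ globally then $g_Z := g'_Z + u$ works verbatim, $dd^c g_Z + \delta_Z = [\omega'_Z] - [\eta] = [\omega_Z]$ with $\omega_Z|_{D_\infty}=0$.

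Finally I would symmetrize to land in the real, $F_\infty$-equivariant subspace: replace $g_Z$ by $\frac{1}{2}(g_Z + (-1)^{p-1}F_\infty^\ast g_Z)$ and correspondingly $\omega_Z$ by $\frac12(\omega_Z + (-1)^p F_\infty^\ast \omega_Z)$. By Lemma~\ref{lem:F-infty-forms} this preserves type $(p,p)$ and the relation $F_\infty^\ast(dd^c) = -dd^c$, by Example~\ref{ex:real-currents}(i) it fixes $\delta_Z$, and since $D$ is defined over $k$ the involution $F_\infty$ preserves $\Sigma_{D_\infty}A^{p,p}_{X_\infty}$, so the vanishing along $D$ survives. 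The net result is a Green current $g_Z \in D^{p-1,p-1}(X_\R)$ with $dd^c g_Z + \delta_Z = [\omega_Z] \in A^{p,p}(X_\R|D)$, as required.

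\textbf{Expected main obstacle.} The delicate point is the $dd^c$-adjustment: a priori $\omega'_Z$ is only $d$-closed and cohomologous to $\delta_Z$, not $\partial\bar\partial$-cohomologous, and one must make the correction $\eta$ compatible with the relative structure — i.e. choose $\eta$ and its primitive $u$ so that $u$ does not reintroduce a restriction to $D_\infty$ that spoils $\omega_Z|_{D_\infty}=0$. This is where Lemma~\ref{lem:Surj*-2} does the real work: it lets one carry out the extension and the $\partial\bar\partial$-reduction \emph{simultaneously on the total complex} $K^{p,\ast}_{X_\infty,D_\infty}$, so the relative cohomology class $cl_{p|D}(Z)$ is respected throughout. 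Everything else (the symmetrization, the $F_\infty$-equivariance, compatibility of $\delta_Z$) is routine bookkeeping with the material already recalled in Sections~\ref{sec:Ana-Pre} and \ref{sec:Rec-ACG}.
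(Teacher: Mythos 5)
Your overall strategy inverts the order of operations used in the paper, and the inversion is where the gap lies. You first take an arbitrary Green current $g'_Z$ with $dd^cg'_Z+\delta_Z=[\omega'_Z]$ and then try to repair $\omega'_Z$ by subtracting a form $\eta$ with $\eta|_{D_\infty}=\omega'_Z|_{D_\infty}$ that is globally $dd^c$-exact (so that the Green current can be adjusted by a primitive $u$). That central step is asserted, not proved, and the tools you cite do not deliver it. Lemma~\ref{lem:Surj*-2} is a purely $C^\infty$ statement: it gives exactness of the restriction sequence for \emph{arbitrary} smooth $(p,q)$-forms and the quasi-isomorphism $\Sigma_{D_\infty}A^{p,*}_{X_\infty}\to K^{p,*}_{X_\infty,D_\infty}$ of $\ov{\partial}$-complexes in fixed holomorphic degree $p$; it carries no Hodge-theoretic content and cannot be used to "choose the extension so that the resulting form is $d$-closed," let alone $dd^c$-exact. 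The $dd^c$-lemma you then invoke requires compact K\"ahler pieces, whereas the theorem is stated for quasi-projective $X$; and even in the projective case you would need to solve $\partial\ov{\partial}u_I=\omega'_Z|_{N_I}$ on each stratum $N_I$ of the sncd and glue the primitives $u_I$ (each determined only up to pluriharmonic ambiguity) compatibly across all intersections before extending to $X_\infty$ --- a nontrivial \v{C}ech problem on the nerve of the components that you do not address. You flag this as the "expected main obstacle" but leave it unresolved, so as written the proof does not go through.

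The paper avoids the difficulty entirely by choosing the \emph{form} first and the current second. Since $Z_\infty\cap D_\infty=\emptyset$, the refined class $\cl(Z)\in H^{p,p}_{Z_\infty}(X_\infty)$ maps to $H^{p}(K^{p,*}_{X_\infty,D_\infty})$, which by Lemma~\ref{lem:Surj*-2} equals $H^{p}(\Sigma_{D_\infty}A^{p,*}_{X_\infty})$; a representative $\omega'_{Z|D}$ of this class is therefore \emph{born} vanishing on $D_\infty$ while still representing $\cl_X(Z)$ in $H^{p,p}(X_\infty)$. One then invokes the comparison between current and form cohomology to produce a Green current $g'_Z$ with $\delta_Z+dd^cg'_Z=[\omega'_{Z|D}]$, and symmetrizes exactly as you do in your last paragraph (that part of your argument is fine and matches the paper). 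If you want to salvage your route, the honest fix is to replace your extension-plus-$dd^c$-correction step by precisely this lift of the supported cohomology class through the relative complex --- at which point you have reproduced the paper's proof.
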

\begin{proof}
    Let $Z\in \mathcal{Z}^p(X|D)$ be a cycle with modulus of codimension $p$. We let $\textnormal{cl}(Z)$ be the class of $Z_{\infty}$ in $H^{p,p}_{Z_{\infty}}(X_{\infty})$. 
    Indeed, if $\fg_Z \in A^{p-1,p-1}(X_{\infty} \setminus Z_{\infty})$ is a Green form for $Z$ of logarithmic type (see \cite[1.3.5]{GS90} for the definition of a Green form of logarithmic type), and if $\omega_0 \in A^{p,p}(X_{\infty})$ is the smooth form such that $d d^c([\fg_Z]) + \delta_{Z} = [\omega_0]$, then the element $(\omega_0, \frac{-\iota}{2\pi} {\partial}\fg_Z) \in  A^{p,p}(X_{\infty})\oplus  A^{p,p-1}(X_{\infty} \setminus Z_{\infty})$
    maps to $\textnormal{cl}(Z) \in H^{p,p}_{Z_{\infty}}(X_{\infty})$ (see \cite[page~114 and Lemma~2.1.1 (i)]{GS90}).
    Since $Z\cap D= \emptyset$, we have a well-defined map 
    \[
    H^{p,p}_{Z_{\infty}}(X_{\infty}) = H^{p}(s(A^{p, *}(X_{\infty}) \to  A^{p,*}(X_{\infty} \setminus Z_{\infty}) )) \to 
    H^p(K^{p,*}_{X_{\infty}, D_{\infty}}).
    \] 
    We let $\textnormal{cl}_{X|D}(Z) \in H^p(K^{p,*}_{X_{\infty}, D_{\infty}})$ denote the image of $\textnormal{cl}(Z)$ under the above map. By Lemma \ref{lem:Surj*-2}, we get 
    \[
    {\cl}_{X|D}(Z) \in  H^{p}(K^{p,*}_{X_{\infty}, D_{\infty}})= 
    H^{p}(\Sigma_{D_{\infty}} A^{p,*}_{X_{\infty}})
    \]
    such that it maps to the cycle class $\cl_X(Z) \in 
    H^{p}(X_{\infty}, A^{p,*}_{X_{\infty}}) = H^{p,p}(X_{\infty})$ under the map induced by the inclusion $\Sigma_{D_{\infty}} A^{p,*}_{X_\infty} \to A^{p,*}(X_{\infty})$. 
    Let $\omega'_{Z|D} \in \Sigma_{D_{\infty}} A^{p,p}_{X_{\infty}}$ be such that its class is the element  ${\cl}_{X|D}(Z)$. We therefore get an element $\omega'_{Z|D} \in  A^{p,p}(X_{\infty})$ such that it maps to the class $\cl_X(Z)$
    in $H^{p,p}(X_{\infty})$ and $(\omega'_{Z|D})_{|D} = 0$.  

    Since $X_{\infty}$ is a complex manifold, the $d$, $\partial$ and $\overline{\partial}$ cohomology of currents on $X$ is the same as the corresponding cohomology of $C^{\infty}$ forms on $X$
    (for example, see \cite[Pages 382-385]{GH78}). Under this identification, the current $\delta_{Z}$ maps to the class $\cl_X(Z)$
    in $H^{p,p}(X_{\infty})$. In particular, the current $[\omega'_{Z|D}]$ and $\delta_{Z}$ are in same class in $H^{p,p}(X_{\infty})$ and hence there exists $g'_{Z} \in \wt{D}^{p-1,p-1}(X_{\infty}) $ such that 
    $\delta_{Z} + d d^c g'_{Z} = [\omega'_{Z|D}] \in {D}^{p,p}(X_{\infty}) $. 

    Let $g^{1}_{Z} = \frac{1}{2}(\tau(g'_{Z}) + g'_{Z})$, $g_{Z} = \frac{1}{2}(g^{1}_{Z} + (-1)^{p-1} F_{\infty}^*g^{1}_{Z}))$, and $\omega^{1}_{Z|D} = \frac{1}{2}(\omega'_{Z|D} + \tau(\omega'_{Z|D}))$, $\omega_{Z|D} = \frac{1}{2}(\omega^{1}_{Z|D} + (-1)^{p} F_{\infty}^*\omega^{1}_{Z|D}))$. We then have 
    $\omega_{Z|D} \in A^{p,p}(X_{\R}|D)$ and 
    $dd^c g_{Z}+ \delta_{Z}=[\omega_{Z|D}]\in D^{p,p}(X_{\R})$. This concludes the proof of the theorem. 
\end{proof}
Now we can define
\begin{df} \label{def:A-cycles*-M}
We let 
\begin{displaymath}\label{eqn:A-cycle*-M-1-1}
\wh{\mathcal{Z}}^p(X|D) = \{(Z, \wt{g}) \in  \mathcal{Z}^p(X|D) \oplus \wt{D}^{p-1,p-1}(X_{\R})~|~ \delta_Z + d d^c \wt{g} = [\omega], \textnormal{ where $\omega \in A^{p, p}(X_{\R}| D)$} \}.
\end{displaymath}
\end{df}

For a codimension $(p-1)$ irreducible closed subset $W$ such that $W\not\subset D$, we let $\nu: W^N \to X$ denote the projection from the normalization of 
$W$ and let $E = \nu^*(D)$ denote the pull-back of $D$ to $W^N$. Recall that $G(W^N, E)$ denotes the set of non-zero rational functions on $W^N$ that are regular (actually a unit) in a neighborhood of $E$ and one along $E$ (see \eqref{eqn:CH-M*-1-0}). For $f \in G(W^N, E)$,
we have 
\begin{equation} \label{eqn:A-cycle*-M-1-3}
  [-\log | f |^2] \in  {D}^{0,0}(\wt{W}_{\R}), 
 \end{equation}
where $\wt{W} \to W^N$ is a resolution of singularities such that the pull-back $\wt{E}$ of $E$ is supported on a  simple normal crossing divisor on $\wt{W}$ (see Example \ref{ex:real-currents}(ii)). 
Let $\iota_W \colon \wt{W}  \to X$ denote the induced proper morphism between smooth quasi-projective schemes over $k$. We have a well-defined pushforward map 
$\iota_{W*}\colon {D}^{0,0}(\wt{W}_{\R}) \to {D}^{p-1,p-1}(X_{\R}).$
We let 
\begin{equation}\label{eqn:A-cycle*-M-1-3.5}
\wh{{\rm div}(f)} = ({\rm div(f)}, \iota_{W*} [-\log | f |^2]) \in \mathcal{Z}^p(X|D) \oplus \wt{D}^{p-1,p-1}(X_{\R}).
\end{equation}
Since $\delta_{{\rm div(f)}} + d d^c (\iota_{W*}[-\log | f |^2]) = 0$,  
 we have $ \wh{{\rm div}(f)}  \in \wh{\mathcal{Z}}^p(X|D)$. Note that this element does not depend on the choice of the resolution of singularities $\wt{W}$ as it is determined by $W_{{\rm reg}}$ (see \cite[Page~127]{GS90}). 
We now let $\wh{\mathcal{R}}^p(X|D)$ be the subgroup of $\wh{\caZ}^{p}(X|D)$, generated by all such $\wh{\Div(f)}$ and define an \textit{Arithmetic Chow group with modulus} as the quotient:
\begin{equation} \label{eqn:A-cycle*-M-1-6}
\wh{\CH}^p(X|D) := \wh{\mathcal{Z}}^p(X|D)/ \wh{\mathcal{R}}^p(X|D). 
\end{equation}

Forgetting the Green currents, one can define a map 
    $\wh{\mathcal{Z}}^p(X|D) \xrightarrow{\zeta_{X|D}} \mathcal{Z}^p(X|D)$. The above map takes $\wh{\Div(f)}$ to $\Div(f)$. We therefore have the induced map 
    $\wh{\CH}^{p}(X|D) \xrightarrow{\zeta_{X|D}} \CH^p(X|D)$.
From Theorem \ref{thm:Surj} the corollary below is immediate. 

\begin{cor}\label{cor:Surj}
    Assume that $X$ is a smooth quasi-projective $k$-scheme. Then for all $p\geq 0$, the canonical map 
    $\zeta_{X|D} \colon \wh{\CH}^p(X|D) \surj {\CH}^p(X|D)$ is surjective. 
\end{cor}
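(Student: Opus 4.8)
The plan is to deduce Corollary~\ref{cor:Surj} directly from Theorem~\ref{thm:Surj} together with the existence of Green currents in the projective case, essentially copying the argument that shows $\wh{\CH}^p(X) \surj \CH^p(X)$ in the classical Gillet--Soul\'e setting. First I would observe that the map $\zeta_{X|D} \colon \wh{\CH}^p(X|D) \to \CH^p(X|D)$ is well-defined (it sends $\wh{\Div(f)}$ to $\Div(f) \in \mathcal{R}^p(X|D)$, so it descends to the quotients), and that surjectivity of $\zeta_{X|D}$ on Chow groups follows as soon as I know that \emph{every} cycle $Z \in \mathcal{Z}^p(X|D)$ admits some $\wt{g} \in \wt{D}^{p-1,p-1}(X_{\R})$ with $(Z,\wt{g}) \in \wh{\mathcal{Z}}^p(X|D)$; then $[(Z,\wt{g})]$ is a preimage of $[Z]$.

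So the entire content is the lifting statement, and that is exactly what Theorem~\ref{thm:Surj} provides: for $Z \in \mathcal{Z}^p(X|D)$ it produces a Green current $g_Z$ with $dd^c g_Z + \delta_Z = [\omega_{Z|D}]$ where $\omega_{Z|D} \in A^{p,p}(X_{\R}|D)$. I would note that the proof of Theorem~\ref{thm:Surj} as written already arranges $g_Z \in \wt{D}^{p-1,p-1}(X_{\R})$ (via the symmetrizations $g^1_Z$ and then the $F_\infty^*$-average), so $(Z, \wt{g_Z})$ lies in $\wh{\mathcal{Z}}^p(X|D)$ by Definition~\ref{def:A-cycles*-M}. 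The projectivity hypothesis enters to guarantee we are in the situation of Theorem~\ref{thm:Surj} with honest Green currents (in the non-projective case one must pass through Green forms of logarithmic type, which is already handled inside that proof via $\fg_Z$, but the clean statement of Corollary~\ref{cor:Surj} is stated for $X$ projective, matching how the classical $\widehat{\CH}^p(X)$ is set up).

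Concretely the steps are: (1) recall $\zeta_{X|D}$ is induced on quotients by forgetting Green currents; (2) take any $Z \in \mathcal{Z}^p(X|D)$ and apply Theorem~\ref{thm:Surj} to obtain $g_Z$ with $\delta_Z + dd^c g_Z = [\omega]$, $\omega \in A^{p,p}(X_{\R}|D)$; (3) conclude $(Z, \wt{g_Z}) \in \wh{\mathcal{Z}}^p(X|D)$, hence $\zeta_{X|D}\big([(Z,\wt{g_Z})]\big) = [Z]$; (4) since $\mathcal{Z}^p(X|D) \surj \CH^p(X|D)$ by definition of the Chow group with modulus, conclude $\zeta_{X|D}$ is surjective. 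There is essentially no obstacle here: the corollary is a formal consequence, and the only point requiring a word of care is that the Green current supplied by Theorem~\ref{thm:Surj} already satisfies the reality and $F_\infty^*$-equivariance needed to land in $\wt{D}^{p-1,p-1}(X_{\R})$, which is exactly the last paragraph of that proof. Thus the proof of the corollary is a one-line invocation of Theorem~\ref{thm:Surj}.
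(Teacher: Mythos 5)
Your proof is correct and matches the paper exactly: the paper simply remarks that the corollary is immediate from Theorem~\ref{thm:Surj}, and your steps (1)--(4) spell out precisely that one-line deduction, including the observation that the Green current produced there already lies in $\wt{D}^{p-1,p-1}(X_{\R})$ so that $(Z,\wt{g_Z})\in\wh{\mathcal{Z}}^p(X|D)$.
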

\subsection{Functoriality}\label{SS2-5}
Much like the usual arithmetic Chow groups, we study the behaviour of their modulus counterpart for proper and flat smooth morphisms.
\begin{prop}\label{functoriality}
Let $\phi \colon (Y,E)\rightarrow (X,D)$ be a smooth morphism of modulus pairs (Definition \ref{df:modpair}). Then 
\begin{enumerate}
\item
 there is a pull-back homomorphism
\begin{displaymath}
\phi^{\ast}\colon \widehat{\CH}^{p}(X|D)\rightarrow  \widehat{\CH}^{p}(Y|E).
\end{displaymath}
\item
If $\phi$ is also proper, there is a push-forward homomorphism
\begin{displaymath}
\phi_{\ast}\colon \widehat{\CH}^{p}(Y|E)\rightarrow \widehat{\CH}^{p-e}(X|D),
\end{displaymath}
where $e=\dim(Y)-\dim(X)$.
\end{enumerate}
\end{prop}
\begin{proof}
The algebraic component is the proof of Proposition \ref{prop:functmodulus}, and we refer to \cite[Lemma 2.8, 2.9]{KP17} for the proof. We show that the corresponding analytic conditions are satisfied. For the pull-back morphism, we have a straightforward argument: Let $(Z,{g}_{Z})\in\wh{\CH}^{p}(X|D)$, with 
$\delta_Z + d d^c g_Z = [\omega]$ such that $\omega_{|_{D}}=0$. Then by the proof of \cite[Theorem~3.6.1]{GS90}, we have 
\begin{equation}
    \delta_{\phi^*(Z)} + d d^c (\phi^*g_Z) = [\phi^*(\omega)].
\end{equation}
But then $\phi^*(\omega)_{|_E} = (\phi_{|_E})^*(\omega_{|_{D}})=0$, where 
$\phi_{|_E} \colon \wt{E} \to \wt{D}$ is the induced map. 

For the push-forward, let $(W,{g}_{W})\in \wh{\CH}^{p}(Y|E)$, with  with 
$\delta_W + d d^c g_W = [\omega_W]$ such that $(\omega_W)_{|_{E}}=0$. Notice that, since $\phi$ is smooth, the element $\phi_{\ast}\omega_{W}$ can be uniquely defined satisfying $\phi_{\ast}[\omega_{W}]=[\phi_{\ast}\omega_{W}]$. We consider the following commutative diagram of complex manifolds. 
\begin{displaymath}
 \xymatrix@C4pc{
  Y_{\infty} \ar[r]^-{\phi} & X_{\infty} \\
  \wt{E}_{\infty} \ar[r]^-{\phi_{|_E}}\ar[u]^-{\iota_{E}} & \wt{D}_{\infty} \ar[u]^-{\iota_{D}},
  }
\end{displaymath}
where the vertical arrows are induced by the inclusions. 
 Using the projection formula for integration along fibers, we have 
 \[
 (\phi|_{E})_{\ast}(\iota^{\ast}_{E}\omega_{W})=\iota^{\ast}_{D}(\phi_{\ast}\omega_{W}). 
 \] 
 Since $\iota^{\ast}_{E}\omega_{W}=0$, we conclude that $\iota_{\ast}\omega_{W}$ vanish along $D$. This gives us the desired maps and completes the proof.
\end{proof}

\subsection{An exact sequence} \label{sec:es}

Recall from \cite[Theorem~3.3.5]{GS90} that arithmetic Chow groups fit into the following exact sequence. 
\[
\CH^{p, p-1}(X) \xrightarrow{\rho}  \wt{A}^{p-1, p-1}(X_{\R}) \xrightarrow{a} \wh{\CH}^{p}(X) \xrightarrow{\zeta} \CH^p(X) \to 0.  
\]
 We prove a similar exact sequence for the arithmetic Chow groups with modulus. Before that, we fix certain notations. 
 We let 
 \begin{equation} \label{eqn:es*-1}
     \wt{A}^{p-1, p-1}(X_{\R}, dd^{c}_{|D}=0) = \{\omega\in \wt{A}^{p-1, p-1}(X_{\R})| 
     (d d^c\omega)_{|D} = 0\} \subset  \wt{A}^{p-1, p-1}(X_{\R}). 
 \end{equation} 
Note that $\wt{A}^{p-1, p-1}(X_{\R}, dd^{c}_{|D}=0)$ contains the subgroup $ \wt{A}^{p-1, p-1}(X_{\R}|D)$ and  any exact $(p-1, p-1)$-form. For $U= X \setminus D$ and $x \in U^{(p-1)}$, we let  $k(x|D)^{\times}$ denote the subgroup $G(W_x^N, E_x) \subset k(x)^{\times}$, where $W$ is the closure of $x$ in $X$, $W_x = \Spec(\mathcal{O}_{W,x})$ and $E_x$ denotes the pull-back of $D$ along the induced morphism $W_x^N \to X$. We let 
\begin{equation*}  \label{eqn:es*-2}
    \mathcal{Z}^{p, p-1}(X|D) = {\rm Ker}\left(\oplus_{x\in U^{(p-1)}} k(x|D)^{\times} \xrightarrow{\text{div}} \oplus_{x\in X^{(p)}} \mathbb{Z}\right),
\end{equation*}
\begin{equation*} \label{eqn:es*-3}
    \mathcal{R}^{p,p-1}(X|D) = {\rm Im}(\oplus_{x\in X^{(p-2)}} K_2(k(x)) \to \oplus_{x\in X^{(p-1)}} k(x)^{\times}) \cap \mathcal{Z}^{p, p-1}(X|D)
\end{equation*}
and let 
\begin{equation} \label{eqn:es*-4}
    \CH^{p, p-1}(X|D) = \frac{\mathcal{Z}^{p, p-1}(X|D)}{\mathcal{R}^{p, p-1}(X|D)}. 
\end{equation}
Since $  \mathcal{Z}^{p, p-1}(X|D) \subset   \mathcal{Z}^{p, p-1}(X) := {\rm Ker}(\oplus_{x\in X^{(p-1)}} k(x)^{\times} \to \oplus_{x\in X^{(p)}} \mathbb{Z}) $, we have the following  inclusion. 
\begin{equation} \label{eqn:es*-5}
    \CH^{p, p-1}(X|D) \inj  \CH^{p, p-1}(X). 
\end{equation}

\begin{rmk} \label{rem:warning-eqn-es-4}
    Recall that by the Gersten sequence, we know that $\CH^{p, p-1}(X)$ is isomorphic to the Zariski-cohomology group $H^{p-1}(X, \mathcal{K}_{p})$, where $\mathcal{K}_*$ are Zariski sheaves associated to the $K$-groups, and that these groups are isomorphic to Bloch's higher Chow group $\CH^{p}(X,1)$ (Landburg \cite[Lemma~E]{Lan91}). However, the groups $\CH^{p, p-1}(X|D)$ are 
    not going to be isomorphic to $\CH^p(X|D, 1)$. We note that by construction that the map $\CH^{p, p-1}(X|D) \inj \CH^p(X, 1)$ is injective but the natural map $\CH^{p}(X|D,1) \to \CH^{p}(X,1)$ is not injective in general. With this viewpoint, we expect $\CH^{p, p-1}(X|D)$  to be the image of the map $\CH^{p}(X|D,1) \to \CH^{p}(X,1)$. 
    Note that, in \cite{GKR20}, it is shown to be the case for zero-cycles, i.e., when $p=d+1$. 
\end{rmk}
We construct various maps in the modulus setting. 
\begin{enumerate}
    \item We have a surjective map 
    $\zeta_{X|D}\colon\wh{\CH}^{p}(X|D) \rightarrow\CH^p(X|D)$ by Corollary~\ref{cor:Surj}. 
    \item We construct a map $a_{X|D} \colon
    \wt{A}^{p-1, p-1}(X_{\R}, dd^{c}_{|D}=0) \to \wh{\CH}^{p}(X|D)$. For  $\omega \in \wt{A}^{p-1, p-1}(X_{\R}, dd^{c}_{|D}=0)$, we consider the element 
     $(0, [\omega]) \in \wh{\mathcal{Z}}^p(X)$, 
    where $[\omega]$ is the associated current.
    Since 
    $(dd^c \omega)_{|D}=0$, we have $(0, [\omega]) \in \wh{\mathcal{Z}}^p(X|D)$. We let 
      \begin{equation} \label{eqn:es*-6}
        a_{X|D}(\omega)= (0, [\omega]) \in \wh{\CH}^p(X|D). 
    \end{equation}
    \item Finally, we construct a map 
    $\rho_{X|D} \colon \CH^{p, p-1}(X|D) \to  \wt{A}^{p-1, p-1}(X_{\R}, dd^{c}|_D=0)$. Recall from the proof of \cite[Theorem~3.3.5]{GS90} that the map 
    $\rho\colon \CH^{p, p-1}(X) \to  \wt{A}^{p-1, p-1}(X_{\R})$ is defined so that 
    $\rho(\{f_{i}\}) = - \sum_{i} \log |f_{i}|^2$, where $f_i$ are rational function on an irreducible closed subset $W_i$ such that $\sum_i \Div(f_{i})=0$. Indeed, since $\sum_i \Div(f_{i})=0$ by definition, the current $- \sum_{i} [\log |f_{i}|^2]$ is given by a smooth form that we keep denoting by $-\sum_{i} \log |f_{i}|^2$. Note that by inclusions \eqref{eqn:es*-1} and \eqref{eqn:es*-5}, it therefore suffices to show that 
    if $\{f_{i}\} \in \mathcal{Z}^{p, p-1}(X|D) = {\rm Ker}(\oplus_{x\in U^{(p-1)}} k(x|D)^{\times} \to \oplus_{x\in X^{(p)}} \mathbb{Z})$, then 
    $-\sum_{i} \log |f_{i}|^2 \in \wt{A}^{p-1, p-1}(X_{\R}, dd^{c}_{|D}=0)$. But this follows because the natural map $ A^{p, p}(X_{\R}) \inj D^{p, p}(X_{\R})$ is an inclusion  and 
    $\sum_{i} d d^c [ \log |f_{i}|^2] =  \delta_{\sum_i \Div(f_i)} = 0$. 
\end{enumerate}

We now study various compositions. Note that  if $\omega \in \wt{A}^{p-1, p-1}(X_{\R}, dd^{c}|_D=0)$, then 
    \begin{equation} \label{eqn:es*-7}
    \zeta_{X|D} \circ a_{X|D}( \omega) = \zeta_{X|D} (0, \omega) = 0. 
    \end{equation} 
If $\{f_{i}\} \in \mathcal{Z}^{p, p-1}(X|D) = {\rm Ker}(\oplus_{x\in U^{(p-1)}} k(x|D)^{\times} \to \oplus_{x\in X^{(p)}} \mathbb{Z})$, then 
    \begin{equation}  \label{eqn:es*-7.5}
    a_{X|D} \circ \rho_{X|D} (\{f_i\}) = a_{X|D}(-\sum_{i} \log |f_{i}|^2) = (0, -[\sum_{i} \log |f_{i}|^2])
    \end{equation}
    \[ \hspace{5cm} = (\sum_i \Div(f_i), -\sum_{i} \log |f_{i}|^2]) = \sum_i\wh{\Div(f_i)} = 0 \in \wh{\CH}^p(X|D).\] Here the last equality holds because for each $f_i \in k(x_i|D)^{\times}$, we have  $\wh{\Div(f_i)} \in \wh{\mathcal{R}}^p(X|D)$. 

By \eqref{eqn:es*-7} and \eqref{eqn:es*-7.5}, we have the following complex of abelian groups. 
\begin{equation} \label{eqn:es*-8}
\CH^{p, p-1}(X|D) \xrightarrow{\rho_{X|D}}  \wt{A}^{p-1, p-1}(X_{\R}, dd^{c}|_{D}=0) \xrightarrow{a_{X|D}} \wh{\CH}^{p}(X|D) \xrightarrow{\zeta_{X|D}} \CH^p(X|D) \to 0.  
\end{equation}

\begin{thm}\label{thm:exact-seq}
    With the above notations, the complex \eqref{eqn:es*-8} is exact. 
\end{thm}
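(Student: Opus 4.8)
The plan is to mimic the proof of \cite[Theorem~3.3.5]{GS90}. By construction the sequence \eqref{eqn:es*-8} is already a complex in its two middle terms (the compositions $\zeta_{X|D}\circ a_{X|D}$ and $a_{X|D}\circ\rho_{X|D}$ vanish by \eqref{eqn:es*-7} and \eqref{eqn:es*-7.5}), and $\zeta_{X|D}$ is surjective by Corollary~\ref{cor:Surj}; as in that corollary I would assume $X$ projective, so that $X_\infty$ is compact K\"ahler and the Hodge-theoretic regularity facts used below are available. It therefore remains to establish exactness at $\wh{\CH}^p(X|D)$ and at $\wt A^{p-1,p-1}(X_{\R},dd^{c}_{|D}=0)$.

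For exactness at the middle term I would take $(Z,\wt g)\in\wh{\caZ}^p(X|D)$ with $\zeta_{X|D}(Z,\wt g)=0$, so $Z\in\caR^p(X|D)$; writing $Z=\sum_i\Div(f_i)$ with $f_i\in G(W_i^N,E_i)$ and $w_i\in U^{(p-1)}$ the generic point of $W_i$, and subtracting $\sum_i\wh{\Div(f_i)}\in\wh{\caR}^p(X|D)$, one reduces to a class $(0,\wt u)\in\wh{\CH}^p(X|D)$ with $\wt u=\wt g-\sum_i\iota_{W_i*}[-\log|f_i|^2]$. The key observation I would emphasize is that $\sum_i\iota_{W_i*}[-\log|f_i|^2]$ is itself a Green current for $Z$ (its $dd^{c}$ equals $-\delta_Z$, as recorded just after \eqref{eqn:A-cycle*-M-1-3.5}), so $\wt u$ is a difference of two Green currents for the \emph{same} cycle $Z$; by \cite[Lemma~1.2.4]{GS90} it is the class $[u]$ of a smooth $u\in A^{p-1,p-1}(X_\R)$. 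Since $(0,\wt u)\in\wh{\caZ}^p(X|D)$ we have $dd^{c}\wt u=[\omega]$ with $\omega\in A^{p,p}(X_\R|D)$, and because forms inject into currents (Lemma~\ref{lem:form--current-inc}) this forces $dd^{c}u=\omega$, whence $(dd^{c}u)_{|D}=\omega_{|D}=0$; thus $u$ represents a class in $\wt A^{p-1,p-1}(X_\R,dd^{c}_{|D}=0)$ with $a_{X|D}(u)=(0,\wt u)=[(Z,\wt g)]$.

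For exactness at $\wt A^{p-1,p-1}(X_\R,dd^{c}_{|D}=0)$ I would take a class $\omega$ with $a_{X|D}(\omega)=0$, so $(0,[\omega])=\sum_i\wh{\Div(f_i)}$ in $\wh{\caR}^p(X|D)$ for suitable $f_i\in G(W_i^N,E_i)$ with $w_i\in U^{(p-1)}$. Comparing cycle components gives $\sum_i\Div(f_i)=0$, so the tuple $\{f_i\}$ lies in $\caZ^{p,p-1}(X|D)$ and defines a class $\xi\in\CH^{p,p-1}(X|D)$; comparing Green-current components gives $[\omega]=\sum_i\iota_{W_i*}[-\log|f_i|^2]=[\rho_{X|D}(\xi)]$ in $\wt D^{p-1,p-1}(X_\R)$, directly from the definition of $\rho_{X|D}$. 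Invoking the injectivity of the natural map $\wt A^{p-1,p-1}(X_\R)\hookrightarrow\wt D^{p-1,p-1}(X_\R)$ I would then conclude $\omega=\rho_{X|D}(\xi)$, already inside the subgroup $\wt A^{p-1,p-1}(X_\R,dd^{c}_{|D}=0)$, which finishes the exactness.

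I expect the only genuinely delicate inputs to be the two analytic facts imported wholesale: \cite[Lemma~1.2.4]{GS90}, that two Green currents for the same cycle differ by a smooth-form class, and the injectivity $\wt A^{p-1,p-1}(X_\R)\hookrightarrow\wt D^{p-1,p-1}(X_\R)$ coming from elliptic regularity on the compact K\"ahler manifold $X_\infty$. Everything else is organizational: one must check that the modulus functions $f_i\in G(W_i^N,E_i)$ that show up are exactly those defining a class in $\CH^{p,p-1}(X|D)$, and that the vanishing $\omega_{|D}=0$ propagates to $dd^{c}u$. Both are transparent once the reduction to a difference of Green currents for the \emph{same} cycle is in place, so I would flag that reduction — rather than any hard estimate — as the heart of the argument.
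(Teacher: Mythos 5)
Your proof is correct and follows essentially the same route as the paper: reduce modulo $\wh{\caR}^p(X|D)$ to a pair $(0,\wt u)$, recognize $\wt u$ as the class of a smooth form via \cite{GS90}, and transfer the vanishing along $D$ through the injectivity of forms into currents. The only difference is one of precision in the final step, where the paper cites the injectivity of $A^{p-1,p-1}(X_{\R}) \inj D^{p-1,p-1}(X_{\R})$ while you correctly observe that what is actually needed is the injectivity of $\wt{A}^{p-1,p-1}(X_{\R}) \inj \wt{D}^{p-1,p-1}(X_{\R})$ (since the Green-current components are only classes modulo $\im(\partial)+\im(\ov{\partial})$), a fact available for $X$ projective by \cite[Theorem~1.2.2]{GS90}.
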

\begin{proof}
    By Corollary \ref{cor:Surj}, the map $\zeta_{X|D}$ is surjective. Since  we have shown that \eqref{eqn:es*-8} is already a complex, it suffices to show that 
    ${\rm Ker}(\zeta_{X|D}) \subset {\rm Im}(a_{X|D})$ and  ${\rm Ker}(a_{X|D}) \subset {\rm Im}(\rho_{X|D})$.

    We first show that ${\rm Ker}(\zeta_{X|D}) \subset {\rm Im}(a_{X|D})$. Let ${(\alpha, g_{\alpha})} \in \wh{\CH}^{p}(X|D)$ such that 
    $0 = \zeta_{X|D} ({(\alpha, g_{\alpha})}) = {\alpha} \in \CH^p(X|D)$. In other words, $\alpha = \sum_i \Div(f_i) \in \mathcal{Z}^p(X|D)$, where $f_i \in k(x_i|D)^{\times}$ is a rational function on $W_i = \ov{\{x_i\}}$ that satisfies modulus $D$. We have \[
    {(\alpha, g_{\alpha})} = {(\alpha, g_{\alpha})- \sum_i \wh{\Div(f_i)}} = {(0, g_{\alpha} + \sum_i [\log |f_i|^2])} \in \wh{\CH}^{p}(X|D).\]
    We note that \cite[Lemma~1.2.2(3)]{GS90} yields a smooth form $\omega \in \wt{A}^{p-1, p-1}(X_{\R})$ such that $g_{\alpha} + \sum_i [\log |f_i|^2] = [\omega] \in \wt{D}^{p-1, p-1}(X_{\R})$. 
    Since 
     $(\alpha, g_{\alpha}) \in \wh{\mathcal{Z}}^p(X|D)$, there exists a smooth form $\omega_1 \in A^{p, p}(X_{\R})$ such that 
    $ \delta_{\alpha} + d d^c(g_{\alpha}) = [\omega_1]$ and $(\omega_1)_{|D} = 0$. Moreover, $\delta_{\alpha} + \sum_i d d^c (-[\log |f_{i}|^2])$=0 implies that
    $d d^c ([\omega]) = d d^c (g_{\alpha} + \sum_i [\log |f_i|^2) = [\omega_1]$. 
    Recall that the natural map $ A^{p, p}(X_{\R}) \inj D^{p, p}(X_{\R})$ is injective and hence $(d d^c \omega)_{|D} = (\omega_1)_{|D} = 0$. This completes the proof because we have proven that  $\omega \in \wt{A}^{p-1, p-1}(X_{\R}, dd^{c}_{|D}=0)$ and 
    \[
    a_{X|D} (\omega) = {(0, [\omega])} = {(0, g_{\alpha} + \sum_i [\log |f_i|^2])} =  {(\alpha, g_{\alpha})}. 
    \]

    Finally, we prove that ${\rm Ker}(a_{X|D}) \subset {\rm Im}(\rho_{X|D})$. Let $\omega_0 \in  \wt{A}^{p-1, p-1}(X_{\R}, dd^c_{|D}=0)$ such that 
    $0 = a_{X|D}(\omega_0)= {(0, [\omega_0])} \in \wh{\CH}^{p}(X|D)$. In other words, there exists finitely many rational functions $f_i \in k(x_i|D)^{\times}$ on $W_i = \ov{\{x_i\}}$ that satisfies modulus $D$ such that $(0, [\omega_0]) = \sum_i \wh{\Div(f_i)} = (\sum_i \Div(f_i), \sum_i -[\log |f_i|^2]) \in \wh{\mathcal{Z}}^p(X|D)$. 
    Observe that by definition of the rational equivalence with modulus, we have that $x_i \in U= X \setminus D$. Summing up these observations, we get an element 
    $\{f_i\} \in \mathcal{Z}^{p, p-1}(X|D) = {\rm Ker}(\oplus_{x\in U^{(p-1)}} k(x|D)^{\times} \to \oplus_{x\in X^{(p)}} \mathbb{Z})$ such that 
    \[
    \rho_{X|D}(\{f_i\}) = - \sum_i \log |f_i|^2 = \omega_0. 
    \]
    Note that the last equality holds because $[\omega_0] = \sum_i -[\log |f_i|^2]$ and the natural map $ A^{p-1, p-1}(X_{\R}) \inj D^{p-1, p-1}(X_{\R})$ is an inclusion. 
    This completes the proof. 
\end{proof}

As a corollary, we get the following result. 
\begin{cor} \label{cor:exact-seqs}
    We have the following commutative diagram of exact sequences. 
    \begin{equation} \label{eqn:es-diag}
        \xymatrix@C2pc{
       \CH^{p, p-1}(X|D) \ar[r]^-{\rho_{X|D}}
       \ar@{^{(}->}[d] & \wt{A}^{p-1, p-1}(X_{\R}, dd^{c}_{|D}=0) \ar[r]^-{a_{X|D}} \ar@{^{(}->}[d]& \wh{\CH}^{p}(X|D) \ar[r]^-{\zeta_{X|D}} \ar[d]& \CH^p(X|D) \ar[d] \ar[r] & 0\\
       \CH^{p, p-1}(X) \ar[r]^-{\rho} & \wt{A}^{p-1, p-1}(X_{\R}) \ar[r]^-{a}  & \wh{\CH}^{p}(X) \ar[r]^-{\zeta} & \CH^p(X) \ar[r] & 0. 
        }
    \end{equation}
\end{cor}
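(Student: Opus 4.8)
The plan is to read off the two rows from results already established and then verify that the diagram commutes square by square. The top row is exact by Theorem~\ref{thm:exact-seq}, and the bottom row is the Gillet--Soul\'e exact sequence \cite[Theorem~3.3.5]{GS90}; so the only work is to pin down the vertical maps and check commutativity of the three squares.

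First I would identify the vertical arrows. The left-most one is the inclusion \eqref{eqn:es*-5}, induced by $\mathcal{Z}^{p,p-1}(X|D) \subset \mathcal{Z}^{p,p-1}(X)$. The second one is the tautological inclusion arising from the definition \eqref{eqn:es*-1} of $\wt{A}^{p-1,p-1}(X_{\R}, dd^{c}_{|D}=0)$ as a subgroup of $\wt{A}^{p-1,p-1}(X_{\R})$. The third vertical map $\wh{\CH}^{p}(X|D) \to \wh{\CH}^{p}(X)$ sends $(Z, \wt{g})$ to $(Z, \wt{g})$: this is well defined because a cycle with modulus is in particular a cycle on $X$, the condition ``$\delta_Z + dd^c\wt g = [\omega]$ with $\omega \in A^{p,p}(X_{\R}|D)$'' of Definition~\ref{def:A-cycles*-M} implies $\omega \in A^{p,p}(X_{\R})$, and for $f \in G(W^N,E) \subset k(W^N)^{\times}$ the element $\wh{\Div(f)}$ of \eqref{eqn:A-cycle*-M-1-3.5} is one of the generators of $\wh{\mathcal{R}}^p(X)$; hence $\wh{\mathcal{R}}^p(X|D)$ maps into $\wh{\mathcal{R}}^p(X)$. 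The right-most vertical map is the usual $\CH^p(X|D) \to \CH^p(X)$, well defined since $\mathcal{R}^p(X|D) \subset \mathcal{R}^p(X)$ (again because $G(W^N,E) \subset k(W^N)^{\times}$).

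Next I would check commutativity. The left square commutes because $\rho_{X|D}$ was \emph{defined} as the restriction of $\rho$: both send a class $\{f_i\}$ to the smooth form $-\sum_i \log|f_i|^2$. The middle square commutes because $a_{X|D}(\omega) = (0,[\omega]) = a(\omega)$, compatibly with the two inclusions. The right square commutes because both $\zeta_{X|D}$ and $\zeta$ are the ``forget the Green current'' maps. Since both rows have already been shown to be exact, this gives the corollary.

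There is no serious obstacle here; the statement is a formal consequence of Theorem~\ref{thm:exact-seq} and \cite{GS90}. The one point that deserves an explicit word is the well-definedness of the middle vertical map $\wh{\CH}^p(X|D) \to \wh{\CH}^p(X)$, i.e.\ the containment $\wh{\mathcal{R}}^p(X|D) \subset \wh{\mathcal{R}}^p(X)$, which follows from $G(W^N,E) \subset k(W^N)^{\times}$ together with the fact that the Green current attached to $f$ in the modulus setting is the same current $\iota_{W*}[-\log|f|^2]$ used by Gillet--Soul\'e.
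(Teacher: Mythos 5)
Your proposal is correct and matches the paper's intent exactly: the paper states Corollary \ref{cor:exact-seqs} without proof, as an immediate consequence of Theorem \ref{thm:exact-seq} and the Gillet--Soul\'e exact sequence, with the vertical maps being precisely the inclusions and forgetful maps you describe. Your verification of the well-definedness of the middle vertical arrow via $\wh{\mathcal{R}}^p(X|D)\subset\wh{\mathcal{R}}^p(X)$ and the square-by-square commutativity check supply exactly the routine details the authors left implicit.
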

\begin{ex}\label{ex:5.8}
We use the exact sequence \ref{eqn:es*-8} to compute an interesting example. Let $X=\P^2_{\Q}$, the projective two space over $\Q$ with homogeneous coordinate $(x_0:x_1:x_2)$, and consider the line $D=V(x_0)$. In this case, since $D$ is ample, it intersects any projective curve in $X$. Hence $\CH^{1}(X|D)=0$. Using the exact sequence \ref{eqn:es*-8}, we get
\begin{displaymath}
\wh \CH^{1}(X|D)\cong\frac{A^{0,0}(X_{\R}, dd^{c}|_{D}=0)}{\text{Image}(\rho_{X|D})}.
\end{displaymath}
We note that $\CH^{1,0}(X|D)\hookrightarrow \CH^{1}(X,1)$. In this case,  $\CH^{1}(X,1)$ is a finite extension of $\Q$, and for any element $\alpha\in \CH^{1,0}(X|D)\hookrightarrow \CH^{1}(X,1)$, we know that $\rho_{X|D}(\alpha)=\log|\alpha|$. But the group $A^{0,0}(X_{\R}, dd^{c}|_{D}=0)$ is much larger. A typical example of an element in this group would be the smooth function
\begin{displaymath}
f([x_0:x_1:x_2])=\lambda+\frac{|x_0|^2}{|x_0|^2+|x_1|^2+|x_2|^2},~\lambda\in \R^{\times},
\end{displaymath}
satisfying $f|_{D}\neq 0$ but $dd^{c}f|_{D}=0$. So modulo the image of $\CH^{1,0}(X|D)$, the ingredients of the arithmetic Chow group with modulus are analytic in nature in this case. This can be compared to the fact that for the arithmetic affine line $\A^1_{\Z}=\Spec(\Z[T])$, the arithmetic Chow group $\wh \CH^1(\A^1_{\Z})\cong A^{0,0}_{\log}(\A^1_{\R})$, the space of smooth real valued functions on $\A^1_{\C}$ that are $F_{\infty}$-invariant, and has logarithmic singularities at infinity. At a more basic level, one can also compare with the isomorphism $\wh \CH^1(\Spec(\Q))\cong \frac{\R}{\langle \log|\alpha|;~\alpha\in \Q^{\times}\rangle}$. In both of these cases, the usual Chow groups are trivial, so the arithmetic version is determined entirely by the analytic component.
\end{ex}

\begin{ex}
Let us discuss another example in which the Chow group with modulus is nonzero. Let $X = \P^1_{\Q} \times \P^1_{\Q}$ with projective bi-coordinates $((x:y), (z:w))$. On $\P^1_{\Q}$ we call $0\coloneqq (0:1)$ and $\infty\coloneqq (1:0)$. Let $D =\P^1_{\Q} \times 0$. Notice that $D$ is not ample, and the cycle $Z =\P^1_\Q \times \infty\in {\mathcal{Z}}^1(X|D)$. A Green form of logarithmic type at $\infty$ is given by:
\begin{displaymath}
g_{\infty} =\log\Big(\frac{|z|^2+|w|^2}{|w|^2}\Big).
\end{displaymath}
Pulling this back to $X$ gives a Green form of logarithmic type for $Z$, which we call $g_Z$. The associated current $[g_Z]\in D^{0,0}(X_{\infty})$ is a Green current of $Z$, satisfying $d d^c [g_Z] + \delta_{Z} =[dd^{c}g_Z]$. It is easy to see that $dd^{c}g_Z \in A^{1, 1}(X_{\R}| D)$, implying $(Z,[g_Z]) \in \wh{\mathcal{Z}}^1(X|D)$. Since adding a smooth form to a Green current also gives a Green current, and doesn't change the cohomology class, one can easily modify $[g_Z]$ to obtain another Green current of the form $[g_Z+\omega]$ where $\omega\in A^{0,0}(X_{\R})$, and such that $dd^{c}(g_Z+\omega)|_{D}\neq 0$. For example, the pull back of $\frac{|x|^2}{|x^2|+|y|^2}$ to $X$ is an element of $A^{0,0}(X_{\R})$. Let's call it $\omega$. Then
\begin{displaymath}
(Z,[g_Z+ \omega])\in \wh{\mathcal{Z}}^1(X) \setminus \wh{\mathcal{Z}}^1(X|D),
\end{displaymath}
because $dd^{c}\omega|_{D}\neq 0$.
\end{ex}
From the above examples, it is clear that our modulus condition is sensitive to both the algebraic and the complex analytic aspects of arithmetic cycles.

\section{Action of Arithmetic Chow group} \label{sec:Prodcut-ACGM}

Following \cite{GS90}, if $X$ is a smooth quasi-projective variety over an arithmetic field $k$, then the direct sum of arithmetic Chow groups $\widehat{\CH}^*(X) = \oplus_p \widehat{\CH}^p(X)$ forms a ring that is called the arithmetic Chow ring. Moreover, the canonical surjection $\widehat{\CH}^*(X) \surj \CH^*(X)$ is a ring homomorphism. Note that we do not need to tensor with $\Q$ because we are working with varieties over an arithmetic field. 

With respect to modulus, Theorem~\ref{thm:Prod-CGM} yields a natural $\CH^*(X)$-action on $\CH^*(X|D)$. 
In this section, we lift this action to a natural action of 
the arithmetic Chow ring $\widehat{\CH}^*(X)$ on the arithmetic Chow groups with modulus $\widehat{\CH}^*(X|D)$ where, as before, $D \hookrightarrow X$ is an effective Cartier divisor that is supported on a simple normal crossing divisor on $X$. The main result is the following theorem:

\begin{thm} \label{thm:Prod-1}
Let $k = (k, \Sigma, F_{\infty})$ be an arithmetic field and let $f \colon X \to \Spec(k)$ be a smooth quasi-projective variety of dimension $d \geq 0$. We let $D \hookrightarrow X$ be an effective Cartier divisor such that $D_{{\rm red}}$ is a simple normal crossing divisor on $X$. We then have an action 
\begin{equation} \label{eqn:Prod-1*1}
  \cap\colon  \widehat{\CH}^p(X) \times \widehat{\CH}^q(X|D) \to \widehat{\CH}^{p+q}(X|D)
\end{equation}
such that
\begin{enumerate}
    \item the action agrees with the product structure on $\widehat{\CH}^*(X)$ when $D=\emptyset$.

    \item the group homomorphism $\widehat{\CH}^*(X|D) \to \CH^*(X|D)$ is $\widehat{\CH}^*(X)$-module homomorphism, where $\CH^*(X|D)$ is seen as an 
    $\widehat{\CH}^*(X)$-module via the ring homomorphism $\widehat{\CH}^*(X) \to \CH^*(X)$.
    \item
    For a smooth morphism $\phi$ as in Proposition \ref{functoriality} with $X$ and $Y$ smooth and projective, this action satisfies the projection formula.
\end{enumerate}
\end{thm}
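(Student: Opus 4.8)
The plan is to combine the already-constructed algebraic action of Theorem~\ref{thm:Prod-CGM} with the analytic (star-product) formalism that underlies the arithmetic intersection pairing of Gillet--Soul\'e, while keeping track of the vanishing-along-$D$ condition throughout. On representatives, an element of $\widehat{\CH}^p(X)$ is a pair $(\alpha, g_\alpha)$ with $\delta_\alpha + dd^c g_\alpha = [\omega_\alpha]$, $\omega_\alpha \in A^{p,p}(X_{\R})$, and an element of $\widehat{\CH}^q(X|D)$ is a pair $(\beta, g_\beta)$ with $\delta_\beta + dd^c g_\beta = [\omega_\beta]$, $\omega_\beta \in A^{q,q}(X_{\R}|D)$. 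First I would move $\alpha$ by Chow's moving lemma to a cycle $\alpha_0$ meeting $\beta$ (and $|\beta| \cap D = \emptyset$, so also $D$) properly, exactly as in the proof of Theorem~\ref{thm:Prod-CGM}, so that $\alpha_0 \cap \beta \in \mathcal{Z}^{p+q}(X|D)$ is defined; on the analytic side, the natural candidate for the Green current of the product is the star product $g_\alpha \ast g_\beta := g_\alpha \wedge \delta_\beta + [\omega_\alpha] \wedge g_\beta$ (interpreting the wedges as in \cite[\S2.1]{GS90} via a Green form of logarithmic type for $\alpha_0$), which satisfies $\delta_{\alpha_0 \cap \beta} + dd^c(g_\alpha \ast g_\beta) = [\omega_\alpha \wedge \omega_\beta]$. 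The key new point is that $\omega_\alpha \wedge \omega_\beta$ still vanishes along $D$: indeed $\omega_\beta \in \Sigma_{D_\infty} A^{q,q}_{X_\infty}$ by definition, and $\Sigma_{D_\infty}A^{\ast}_{X_\infty}$ is an ideal in $A^\ast(X_\infty)$ (the restriction map to each $D_{i,\infty}$ is multiplicative), so $\omega_\alpha \wedge \omega_\beta \in \Sigma_{D_\infty}A^{p+q,p+q}_{X_\infty}$, and after symmetrizing with $\tau$ and $F_\infty^\ast$ as in the proof of Theorem~\ref{thm:Surj} we land in $A^{p+q,p+q}(X_{\R}|D)$. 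Hence $(\alpha_0\cap\beta, \ \widetilde{g_\alpha \ast g_\beta}) \in \widehat{\mathcal{Z}}^{p+q}(X|D)$.

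Next I would check well-definedness, i.e.\ independence of all choices and vanishing of the pairing on $\widehat{\mathcal{R}}^p(X) \cap (\text{proper})$ in the first slot and on $\widehat{\mathcal{R}}^q(X|D)$ in the second. For the first slot: if $(\alpha,g_\alpha) = \widehat{\mathrm{div}(h)}$ for $h \in k(W)^\times$ with $W$ meeting $\beta$ properly, then on cycles $\alpha_0 \cap \beta \in \mathcal{R}^{p+q}(X|D)$ by Lemma~\ref{lem:Prod-cycle-2}, and the corresponding refinement at the level of currents is the standard computation (the $K_1$-moving lemma \cite[Lemma~4.2.6]{GS90} plus the identity $\mathrm{div}(h)\cdot[Z] = \mathrm{div}(h.Z)$ from \eqref{eqn:Prod-cycle*-8}) showing that the star product with $-[\log|h|^2]\wedge\delta_W$-type terms is again of the form $\iota_{W'\ast}[-\log|h'|^2] + (\text{an exact current})$ with $h' \in G((W')^N, E')$; the modulus of the $h'$ is precisely the content of the last paragraph of the proof of Theorem~\ref{thm:Prod-CGM}. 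For the second slot: pairing $(\alpha_0,g_\alpha)$ with $\widehat{\mathrm{div}(f)}$, $f \in G(W^N, E)$, the cycle part is $\alpha_0 \cap \mathrm{div}(f) = \sum_l m_l \mathrm{div}(f|_{Z_l})$ with each $f|_{Z_l} \in G(Z_l^N, \cdot)$ by the containment lemma \cite[Proposition~2.4]{KP17}, exactly as in the proof of Theorem~\ref{thm:Prod-CGM}, and the current part is $\iota_{W\ast}[-\log|f|^2]\wedge\delta_{\alpha_0} + \dots$, which pushes forward to a sum of $\widehat{\mathrm{div}(f|_{Z_l})}$ plus $\partial u + \overline\partial v$ terms, hence lies in $\widehat{\mathcal{R}}^{p+q}(X|D)$. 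Independence of the logarithmic Green form chosen for $\alpha_0$, and independence of $\alpha_0$ within its rational equivalence class with $\beta$ fixed, follow from \cite[Lemma~1.2.4]{GS90} and the $\widetilde{(\,)}$-quotient, as in \cite[\S4]{GS90}. Associativity and compatibility with $\widehat{\CH}^\ast(X)$-multiplication (giving the module structure, item~(i) and the first half of item~(ii)) then follow from associativity of the star product on $X$ together with Theorem~\ref{thm:Prod-CGM}; item~(ii)'s second half is immediate since $\zeta_{X|D}$ forgets currents and $\zeta_X$ forgets currents compatibly.

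For item~(iii), the projection formula, I would follow the pattern of \cite[Theorem~4.4.3]{GS90} adapted via Proposition~\ref{functoriality}. Given a smooth proper $\phi\colon(Y,E)\to(X,D)$ with $E = \phi^\ast D$, one wants $\phi_\ast(\phi^\ast x \cap y) = x \cap \phi_\ast y$ for $x \in \widehat{\CH}^\ast(X)$, $y \in \widehat{\CH}^\ast(Y|E)$. On cycles this is the ordinary projection formula for Chow groups with modulus; on the analytic side it reduces to the projection formula for integration along the fibers of $\phi_\infty$ applied to star products, i.e.\ $\phi_\ast(\phi^\ast\omega_x \wedge \omega_y) = \omega_x \wedge \phi_\ast\omega_y$ and the analogous identity for Green currents, together with the commutativity of pushforward/pullback of currents with $dd^c$ already used in Proposition~\ref{functoriality}. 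The compatibility with the vanishing conditions is exactly what was checked there: $\phi^\ast$ of a form vanishing on $D$ vanishes on $E$, and $\phi_\ast$ of a form vanishing on $E$ vanishes on $D$ by the projection formula $(\phi|_E)_\ast(\iota_E^\ast\omega) = \iota_D^\ast(\phi_\ast\omega)$.

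The main obstacle I anticipate is the second slot of well-definedness combined with the star-product bookkeeping: one must verify that pairing a proper representative $(\alpha_0, g_\alpha)$ with $\widehat{\mathrm{div}(f)}$ for $f$ satisfying modulus produces a current that is genuinely a sum of $\widehat{\mathrm{div}}$-terms with modulus \emph{plus} a current of the form $\partial u + \overline\partial v$ — i.e.\ that the ``error'' currents arising from the star-product (the ones that are exact but not obviously of pure type $\partial u+\overline\partial v$) can be arranged to lie in $\widehat{\mathcal{R}}^{p+q}(X|D)$. This is where one has to be careful about which class of Green currents one quotients by, and it is essentially the place where Gillet--Soul\'e invoke their refined moving lemmas for $K_1$-chains; the modulus condition does not make this step worse (the containment lemma handles it on cycles), but it does require stating the star-product identities at the level of currents with enough precision to see the $\partial u + \overline\partial v$ structure. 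All other steps are routine adaptations of \cite{GS90} and \cite{KP17} once the ideal property of $\Sigma_{D_\infty}A^\ast_{X_\infty}$ is in hand.
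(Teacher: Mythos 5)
Your proposal is correct and follows essentially the same route as the paper: move $\alpha$ by Chow's moving lemma, form the star product via logarithmic Green forms, use the ideal property of $\Sigma_{D_\infty}A^{\ast}_{X_\infty}$ to see that $\omega_\alpha\wedge\omega_\beta$ vanishes along $D$ (the paper's Lemma \ref{lem:prod-currents-1}), and handle both slots of well-definedness with the $K_1$-moving lemma, the identity $\Div(h)\cdot[Z]=\Div(h.Z)$, and the containment lemma (the paper's Lemma \ref{lem:prod-curr-rel}). The ``error current'' issue you flag at the end is resolved in the paper exactly as you suggest, by working in the quotient $\wt{D}^{\ast,\ast}(X_{\R})$ from the outset.
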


Recall from Section~\ref{sec:Action-CR} that if two cycles $\alpha = \sum_l n_l [Y_l] \in \mathcal{Z}^{p}(X)$ and $\beta = \sum_t m_t [Z_t] \in \mathcal{Z}^{q}(X)$ intersect properly, we have a well-defined cycle 
    $\alpha \cap \beta = \sum_{l, t} n_l m_t [Y_l \cap Z_t] \in  \mathcal{Z}^{p+q}(X)$. Moreover, if $\beta \in \mathcal{Z}^{q}(X|D)$, then $\alpha \cap \beta \in  \mathcal{Z}^{p+q}(X|D)$ because 
    $Y_l \cap Z_t \cap D \subset Z_t \cap D = \emptyset$ for all $l, t$. We start with discussing the action on Green currents. 

\subsection{Action on Green currents} \label{sec:Prod-currents}

In this section, we recall the definition of the product to green currents and its properties from \cite[Sections~2, 4]{GS90}.

Let $(\alpha,  {g}_{\alpha}) \in \wh{\mathcal{Z}}^p(X)$ and $(\beta, {g}_{\beta}) \in \wh{\mathcal{Z}}^q(X|D)$, i.e, 
$(\alpha,  {g}_{\alpha}) \in  \mathcal{Z}^p(X) \oplus \wt{D}^{p-1,p-1}(X_{\R})$ 
and $(\beta, {g}_{\beta}) \in  \mathcal{Z}^q(X|D) \oplus \wt{D}^{q-1,q-1}(X_{\R})$
such that 
\begin{equation} \label{eqn:prod-current*1}
    \delta_{\alpha} + d d^c {g}_{\alpha} = [\omega_{\alpha}] \textnormal{ and } 
      \delta_{\beta} + d d^c {g}_{\beta} = [\omega_{\beta}], 
\end{equation}
where $\omega_{\alpha} \in A^{p,p}(X_{\R})$ and $\omega_{\beta} \in A^{q,q}(X_{\R}| D)$. It follows from \cite[Theorem~1.3.5, Lemma~1.2.4]{GS90} that there exists Green form $\mathfrak{g}_{\alpha}'$ (resp. $\mathfrak{g}_{\beta}'$) of logarithmic type for the cycle $\alpha$ (resp. $\beta$) such that $[\mathfrak{g}_{\alpha}'] = {g}_{\alpha}  \in \wt{D}^{p-1,p-1}(X_{\R})$ 
(resp. $[\mathfrak{g}_{\beta}'] = \wt{g}_{\beta} \in  \wt{D}^{q-1,q-1}(X_{\R})$), where $[\mathfrak{g}] \in \wt{D}^{i-1,i-1}(X_{\R})$  denote the class of the current associated to a Green form $\mathfrak{g}$  logarithmic type of degree $i-1$. 
Assume now that the cycles $\alpha$ and $\beta$ intersect properly. 
We define the star product of the classes ${g}_{\alpha}$ and ${g}_{\beta}$ of Green currents as follows. 
\begin{equation} \label{eqn:prod-current*2}
{g}_{\alpha} \ast {g}_{\beta} := {[\mathfrak{g}_{\alpha}'] \ast [\mathfrak{g}_{\beta}']} 
= [\mathfrak{g}_{\alpha}']  \wedge \delta_{\beta} + \omega_{\alpha} \wedge [\mathfrak{g}_{\beta}'] \in 
\wt{D}^{p+q-1, p+q-1}(X_{\R}). 
\end{equation}
As proven in \cite[\S~2]{GS90}, the above element does not depend on the choices of $\mathfrak{g}_{\alpha}'$ and $\mathfrak{g}_{\beta}'$, and hence the product \eqref{eqn:prod-current*2} is well defined and it is the class of a Green current for the cycle $\alpha \cap \beta$. 

\begin{lem} \label{lem:prod-currents-1}
    Let $(\alpha,  {g}_{\alpha}) \in \wh{\mathcal{Z}}^p(X)$ and $(\beta, {g}_{\beta}) \in \wh{\mathcal{Z}}^q(X|D)$. Assume that the cycles $\alpha$ and $\beta$ intersect properly. Then 
    ${g}_{\alpha} \ast {g}_{\beta} \in \wt{D}^{p+q-1,p+q-1}(X_{\R}|D)$. In particular, we have 
    $(\alpha \cap \beta,{g}_{\alpha} \ast {g}_{\beta} ) \in 
    \wh{\mathcal{Z}}^{p+q}(X|D)$. 
\end{lem}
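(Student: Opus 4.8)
The plan is to deduce the lemma from the properties of the $\ast$-product recalled in \eqref{eqn:prod-current*2} together with a single elementary observation: wedging a smooth form with one that vanishes along $D$ gives a form that vanishes along $D$. The discussion preceding the lemma (following \cite[\S 2]{GS90}) already tells us that, since $\alpha$ and $\beta$ meet properly, $g_{\alpha}\ast g_{\beta}\in\wt D^{p+q-1,p+q-1}(X_{\R})$ is well defined and is the class of a Green current for the cycle $\alpha\cap\beta$. The first step I would carry out is to pin down its associated smooth form: choosing Green forms of logarithmic type $\mathfrak g_{\alpha}'$, $\mathfrak g_{\beta}'$ representing $g_{\alpha}$, $g_{\beta}$, the explicit formula $\mathfrak g_{\alpha}'\ast\mathfrak g_{\beta}' = \mathfrak g_{\alpha}'\wedge\delta_{\beta}+\omega_{\alpha}\wedge\mathfrak g_{\beta}'$ together with $dd^c[\mathfrak g_{\alpha}']=[\omega_{\alpha}]-\delta_{\alpha}$ and $dd^c[\mathfrak g_{\beta}']=[\omega_{\beta}]-\delta_{\beta}$ gives, exactly as in \cite[\S 2]{GS90},
\begin{equation*}
\delta_{\alpha\cap\beta}+dd^c(g_{\alpha}\ast g_{\beta}) = [\omega_{\alpha}\wedge\omega_{\beta}]\in D^{p+q,p+q}(X_{\infty}).
\end{equation*}

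The second, and only genuinely new, step is to check that $\omega_{\alpha}\wedge\omega_{\beta}\in A^{p+q,p+q}(X_{\R}|D)$. Since $\omega_{\alpha}\in A^{p,p}(X_{\R})$ and $\omega_{\beta}\in A^{q,q}(X_{\R}|D)\subset A^{q,q}(X_{\R})$, the wedge $\omega_{\alpha}\wedge\omega_{\beta}$ is a smooth $(p+q,p+q)$-form which is real (because $\tau$ is a ring homomorphism on forms) and satisfies $F_{\infty}^{*}(\omega_{\alpha}\wedge\omega_{\beta})=(-1)^{p}(-1)^{q}\,\omega_{\alpha}\wedge\omega_{\beta}$, so it lies in $A^{p+q,p+q}(X_{\R})$ by \eqref{eqn:R-F-int-Forms*-1}. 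By \eqref{eqn:real-form-M}, the hypothesis $\omega_{\beta}\in A^{q,q}(X_{\R}|D)$ means $\omega_{\beta}\in\Sigma_{D_{\infty}}A^{q,q}_{X_{\infty}}$, i.e.\ $\omega_{\beta}|_{D_{i,\infty}}=0$ for every irreducible component $D_{i,\infty}$ of $D_{\infty}$ (notation of Lemma~\ref{lem:Surj*-2}). As restriction along the closed immersion $D_{i,\infty}\hookrightarrow X_{\infty}$ is a homomorphism of graded algebras, $(\omega_{\alpha}\wedge\omega_{\beta})|_{D_{i,\infty}}=\omega_{\alpha}|_{D_{i,\infty}}\wedge\omega_{\beta}|_{D_{i,\infty}}=0$ for all $i$; hence $\omega_{\alpha}\wedge\omega_{\beta}\in\Sigma_{D_{\infty}}A^{p+q,p+q}_{X_{\infty}}$, and therefore $\omega_{\alpha}\wedge\omega_{\beta}\in A^{p+q,p+q}(X_{\R}|D)$.

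Finally I would assemble the pieces: $g_{\alpha}\ast g_{\beta}$ is a real, $F_{\infty}$-equivariant Green current for $\alpha\cap\beta$ whose associated smooth form $\omega_{\alpha}\wedge\omega_{\beta}$ vanishes along $D$, which is precisely the assertion $g_{\alpha}\ast g_{\beta}\in\wt D^{p+q-1,p+q-1}(X_{\R}|D)$; combined with $\alpha\cap\beta\in\mathcal Z^{p+q}(X|D)$ (because $Y_{l}\cap Z_{t}\cap D\subset Z_{t}\cap D=\emptyset$ for all $l,t$), Definition~\ref{def:A-cycles*-M} then yields $(\alpha\cap\beta,\,g_{\alpha}\ast g_{\beta})\in\wh{\mathcal Z}^{p+q}(X|D)$. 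I do not anticipate a real obstacle here: the only non-formal input is the classical $\ast$-product machinery of \cite[\S 2]{GS90} (well-definedness, the differential equation, and the behaviour under $\tau$ and $F_{\infty}^{*}$), all of which is already recorded in the lead-up to the lemma, while the modulus refinement reduces to the triviality that wedging with a form vanishing on each $D_{i}$ preserves that vanishing.
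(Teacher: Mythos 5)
Your proposal is correct and follows essentially the same route as the paper: both reduce the lemma to the identity $dd^c(g_{\alpha}\ast g_{\beta})+\delta_{\alpha\cap\beta}=[\omega_{\alpha}\wedge\omega_{\beta}]$ from \cite[Theorem~2.1.4]{GS90} and then observe that $\omega_{\alpha}\wedge\omega_{\beta}\in A^{p+q,p+q}(X_{\R}|D)$ because $\omega_{\beta}$ vanishes along $D$. The only difference is that you spell out the realness, the $F_{\infty}^{*}$-sign, and the componentwise vanishing on $D_{i,\infty}$, which the paper leaves implicit.
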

\begin{proof}
    Since the cycles $\alpha$ and $\beta$ intersects properly, it follows from 
    \cite[Theorem~2.1.4]{GS90} that 
    \begin{equation} \label{eqn:prod-current*3}
        d d^c({g}_{\alpha} \ast {g}_{\beta}) + \delta_{\alpha \cap \beta} = [\omega_{\alpha}\wedge\omega_{\beta}] \in D^{p+q}(X_{\R}), 
    \end{equation}
    where $\omega_{\alpha}$ and $\omega_{\beta}$ are as in \eqref{eqn:prod-current*1}. 
    Since $\omega_{\beta} \in A^{q,q}(X_{\R}|D)$, we have $\omega_{\alpha}\wedge\omega_{\beta} \in A^{p+q,p+q}(X_{\R}|D)$, i.e., we have $(\alpha \cap \beta,{g}_{\alpha} \ast {g}_{\beta} ) \in 
    \wh{\mathcal{Z}}^{p+q}(X|D)$. This completes the proof. 
\end{proof}

 At the level of rational equivalences, we have the following lemma. 
\begin{lem}\label{lem:prod-curr-rel}
    \begin{enumerate}
    \item Let $(\alpha,  {g}_{\alpha}) \in \wh{\mathcal{R}}^p(X)$ and $(\beta, {g}_{\beta}) \in \wh{\mathcal{Z}}^q(X|D)$. Assume that the cycles $\alpha$ and $\beta$ intersect properly. Then 
        $(\alpha \cap \beta,{g}_{\alpha} \ast {g}_{\beta} ) \in 
    \wh{\mathcal{R}}^{p+q}(X|D)$.
  
    \item Let $(\alpha,  {g}_{\alpha}) \in \wh{\mathcal{Z}}^p(X)$ and $(\beta, {g}_{\beta})= \wh{\Div(f)} \in \wh{\mathcal{R}}^q(X|D)$, where $f$ is a rational function on a closed subvariety $W$ of $X$. Assume further that the cycle $\alpha$  intersects with $W$, $W \cap D$ and $\Div(f)$ properly. Then $(\alpha \cap \beta,{g}_{\alpha} \ast {g}_{\beta} ) \in 
    \wh{\mathcal{R}}^{p+q}(X|D)$.
 \end{enumerate}
\end{lem}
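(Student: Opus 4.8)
The plan is to prove both parts as arithmetic refinements of the corresponding algebraic statements — Lemma~\ref{lem:Prod-cycle-2} for part~(1), and the step showing $\CH^p(X)\cap\mathcal R^q(X|D)=0$ in the proof of Theorem~\ref{thm:Prod-CGM} for part~(2) — by dragging the Green current along with the cycle and then invoking the well-definedness of the Gillet--Soul\'e product on $\wh{\CH}^*(X)$ established in \cite{GS90}. The structural point that makes the modulus bookkeeping essentially free is that, in both cases, the intersection cycle and the rational equivalence realizing it as a boundary are supported on subvarieties of $|\beta|$ (in part~(1)) or of $W$ (in part~(2)); hence the modulus condition is either automatic or is precisely the containment lemma \cite[Proposition~2.4]{KP17}.

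For part~(1), I would first use bilinearity of $\cap$ and of the $\ast$-product together with the moving lemma for $K_1$-chains \cite[Lemma~4.2.6]{GS90} — which, as in \cite[\S 4.2]{GS90}, rewrites the chosen presentation of $(\alpha,g_\alpha)\in\wh{\mathcal R}^p(X)$ without changing it in $\wh{\mathcal Z}^p(X)$ — to reduce to $(\alpha,g_\alpha)=\wh{\Div(h)}$ for a rational function $h$ on an integral closed $W\subset X$ of codimension $p-1$ with $\Div(h)$ meeting $\beta$ properly. Then $\omega_\alpha=0$, so the $\ast$-product collapses to $g_\alpha\ast g_\beta=[\mathfrak{g}_\alpha']\wedge\delta_\beta$, and following \cite[\S 4.2]{GS90} together with the proof of Lemma~\ref{lem:Prod-cycle-2} one obtains
\[
(\Div(h)\cap\beta,\ g_\alpha\ast g_\beta)=\wh{\Div(h.\beta)}\in\wh{\mathcal Z}^{p+q}(X),
\]
where $h.\beta$ is the $K_1$-chain of Lemma~\ref{lem:Prod-cycle-2} — a combination $\sum_l m_l[S_l,h|_{S_l}]+[h]\cdot\gamma$ with the $S_l$ and the support of $\gamma$ contained in $W\cap|\beta|\subseteq|\beta|$. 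Since $\beta\in\mathcal Z^q(X|D)$, we have $|\beta|\cap D=\emptyset$, so each of these subvarieties is disjoint from $D$; the pulled-back modulus on each is empty, so the corresponding restricted rational functions satisfy it trivially, whence $\wh{\Div(h.\beta)}\in\wh{\mathcal R}^{p+q}(X|D)$.

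For part~(2), by bilinearity I would reduce to $\alpha=[A]$ irreducible. Since $(\beta,g_\beta)=\wh{\Div(f)}$ one has $\omega_\beta=0$ and $g_\beta$ is the class of the logarithmic-type current $(\iota_W)_*[-\log|f|^2]$, whose support is $W$. Using that $A$ meets $W$ properly, together with the commutativity of the $\ast$-product modulo $\im\partial+\im\overline\partial$ \cite[\S 2]{GS90} and $\omega_\beta=0$, I would write
\[
g_\alpha\ast g_\beta=g_\beta\ast g_\alpha=(\iota_W)_*[-\log|f|^2]\wedge\delta_\alpha\ \in\ \wt D^{p+q-1,p+q-1}(X_{\R}).
\]
Writing $A\cap[W]=\sum_l m_l[Z_l]$ — each $Z_l$ of codimension $p+q-1$ since $A$ meets $W$ properly, and none contained in $|\Div(f)|$ since $A$ meets $|\Div(f)|$ properly — restricting $-\log|f|^2$ along the $Z_l$ gives $g_\alpha\ast g_\beta=\sum_l m_l(\iota_{Z_l})_*[-\log|f|_{Z_l}|^2]$ in $\wt D$; combined with the algebraic identity $\alpha\cap\Div(f)=\sum_l m_l\Div(f|_{Z_l})$ from the proof of Theorem~\ref{thm:Prod-CGM} (via \cite[Lemma~4.2.5]{GS90}), this yields $(\alpha\cap\beta,\ g_\alpha\ast g_\beta)=\sum_l m_l\,\wh{\Div(f|_{Z_l})}$. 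Finally, $A$ meeting $W\cap D$ properly forces $Z_l\not\subset D$ (since $|A|\cap(W\cap D)$ has codimension $\ge p+q>p+q-1$), so the containment lemma \cite[Proposition~2.4]{KP17} (cf.\ \cite[Lemmas~2.1 and 2.2]{KP12}) gives $f|_{Z_l}\in G(Z_l^N,E_l)$ with $E_l$ the pull-back of $D$; hence each $\wh{\Div(f|_{Z_l})}\in\wh{\mathcal R}^{p+q}(X|D)$.

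\textbf{Main obstacle.} I expect the delicate point to be the two displayed identities, i.e.\ matching the class of $g_\alpha\ast g_\beta$ in $\wt D$ with the explicit sum of $[-\log|\cdot|^2]$-currents attached to the subvarieties dictated by the algebraic intersection. This forces one to work inside the calculus of Green forms of logarithmic type of \cite[\S 1, \S 2]{GS90}: the explicit $\ast$-product formula, its commutativity modulo $\im\partial+\im\overline\partial$, and — in part~(1) — its compatibility with the excess-intersection term $[h]\cdot\gamma$ already present in Lemma~\ref{lem:Prod-cycle-2}. Once these are in place, the modulus conclusions are immediate: trivial in part~(1) since subvarieties of $|\beta|$ avoid $D$, and the containment lemma in part~(2).
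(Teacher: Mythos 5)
Your proposal is correct and follows essentially the same route as the paper: in part (1) it reduces via the $K_1$-chain moving lemma \cite[Lemma~4.2.6]{GS90} to $\wh{\Div(h)}$, collapses the $\ast$-product using $\omega_\alpha=0$, identifies $[\log|h|^2]\wedge\delta_\beta$ with $[\log|h\cdot\beta|^2]$ (the paper cites \cite[Lemma~4.2.5(2)]{GS90} for exactly this, which is the ``main obstacle'' you flag), and gets the modulus for free since the support lies in $|\beta|$, which avoids $D$; in part (2) it uses commutativity of $\ast$ with $\omega_\beta=0$ and the containment lemma, again as in the paper. The only cosmetic difference is that you keep the excess term $[h]\cdot\gamma$ explicit in part (1) and spell out why $Z_l\not\subset D$ via a codimension count, both of which the paper leaves implicit.
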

\begin{proof}
    The lemma follows from \cite[Lemmas~4.2.5 and 4.2.6]{GS90} and the proof of Theorem
    \ref{thm:Prod-CGM}. Below, we write the details of both parts. 
    
Let $(\alpha,  {g}_{\alpha}) \in \wh{\mathcal{R}}^p(X)$. By the moving lemma for $K_1$-chains \cite[Lemma~4.2.6]{GS90}, we can find finitely many closed subvarieties $W_l$ of $X$ and rational functions $h_l$ on $W_l$ such that 
        $(\alpha, {g}_{\alpha})= \sum_l m_l \wh{\Div(h_l)}$ and that each $\Div(h_l)$ intersects with $\beta$ properly. But then for the cycle 
        $\beta = \sum_t n_t [Z_t] \in {\mathcal{Z}}^q(X|D)$, we have 
        \begin{eqnarray*}
            {g}_{\alpha} \ast {g}_{\beta}  &=& - \sum_l m_l ([{\rm log} |h_l|^2] \ast {g}_{\beta})\\
            & =^1&  - \sum_l m_l ([{\rm log} |h_l|^2] \wedge \delta_{\beta})\\
            & =^2& - \sum_l m_l (\log |h_l  \cdot \beta|^2). 
        \end{eqnarray*}
        where the equality $=^1$ holds because of definition on the star-product of currents \eqref{eqn:prod-current*2} and the fact that $d d^c(-[\log |h_l|^2]) + \delta_{\Div(h_l)} = 0$. Note that the element
        $h_l  \cdot \beta = \sum_t n_t (h_l \cdot Z_t) $ is defined using \eqref{eqn:Prod-cycle*-7} and 
        the equality $=^2$ holds by \cite[Lemma~4.2.5(2)]{GS90}. For each $l$ and $t$, we have 
        $h_l  \cdot Z_t \in \mathcal{R}^*(X|D)$ because $h_l  \cdot Z_t$ is a rational function on a closed subvariety that is contained inside $W_l \cap Z_t$ and $Z_t \cap D = \emptyset$ (see the proof of Lemma \ref{lem:Prod-cycle-2}). Moreover, by \eqref{eqn:Prod-cycle*-8}, we have 
     $\alpha \cdot \beta = \sum_{l, t} m_l n_t \Div(h_l) \cdot [Z_t] = 
     \sum_{l, t} m_l n_t \Div(h_l\cdot Z_t) $. Putting all together, we get 
     \begin{equation} \label{eqn:prod-curr-rel*1}
         (\alpha \cap \beta,{g}_{\alpha} \ast {g}_{\beta} ) = \sum_{l,t} m_l n_t\wh{\Div(h_l  \cdot Z_t)}  \in 
    \wh{\mathcal{R}}^{p+q}(X|D). 
     \end{equation}
     This proves the part (i) of the lemma.

    Let $(\alpha, {g}_{\alpha}) \in \wh{\mathcal{Z}}^p(X)$ and $(\beta, {g}_{\beta})= \wh{\Div(f)} \in \wh{\mathcal{R}}^q(X|D)$ be as in part (ii). Let $\alpha=\sum_l m_l [V_l] \in \mathcal{Z}^p(X)$.
    Then since we have assumed that each $V_l$ intersects with $W$ properly, we have 
    \begin{eqnarray*}
            {g}_{\alpha} \ast {g}_{\beta}  &=& {g}_{\alpha} \ast - ([{\rm log} |f|^2]) \\
            & =^3& -  [{\rm log} |f|^2] \wedge \delta_{\alpha} \\
            & =^4& - \sum_l m_l [\log |f\cdot V_l|^2], 
    \end{eqnarray*}
    where the equality $=^3$ holds because ${g}_{\alpha} \ast ([{\rm log} |f|^2]) =  ([{\rm log} |f|^2]) \ast {g}_{\alpha} = [{\rm log} |f|^2] \wedge \delta_{\alpha}$ and as before, the equality $=^4$ holds by \cite[Lemma~4.2.5(2)]{GS90}.  Note that if $[V_l \cap W] = \sum_{t} n^l_t W^l_{t}$, then $f\cdot V_l= \sum_{t} n^l_t (f|_{W^l_t})$ (see \eqref{eqn:Prod-cycle*-7}). Moreover, as in the proof of Theorem 
    \ref{thm:Prod-CGM}, $W^l_t \not\subset D$ and 
    the rational functions $f|_{W^l_t}$ satisfy modulus by containment lemma. By \eqref{eqn:Prod-cycle*-10}, we have 
    $\alpha \cap \beta = \alpha \cap \Div(f) = \sum_l m_l ([V_l] \cap \Div(f)) = \sum_l m_l \Div(f \cdot V_l)$. In particular, we get 
    \begin{equation} \label{eqn:prod-curr-rel*2}
        (\alpha \cap \beta,{g}_{\alpha} \ast {g}_{\beta} ) =  
        \sum_l m_l \wh{\Div(f \cdot V_l)} \in \wh{\mathcal{R}}^{p+q}(X|D).
    \end{equation}
    This proves the part (ii) and hence the lemma. 
\end{proof}

In the following proof of Theorem~\ref{thm:Prod-1}, we use the same notations for the elements of $\wh{\mathcal{Z}}^p(X)$ (resp. $\wh{\mathcal{Z}}^q(X|D)$) and their class in 
$\wh{\CH}^p(X)$ (resp. $\wh{\CH}^q(X|D)$). 

\subsection{Proof of Theorem \ref{thm:Prod-1}}

 Let $(\beta, {g}_{\beta}) \in \wh{\mathcal{Z}}^q(X|D)$ and let ${(\alpha, {g}_{\alpha})} \in \wh{\CH}^p(X)$.  By Chow's moving lemma \cite[\S~3]{Roberts}, we can find a cycle $\alpha_0 \in \mathcal{Z}^p(X)$ such that $\alpha_0$ intersects with $\beta$ properly and it is rationally equivalent to $\alpha$. Let $\alpha_0 - \alpha = \sum_l \Div(f_l) \in \mathcal{R}^p(X)$, where $f_l$ are rational functions on closed subvarieties $W_l$ of $X$. If we let $g_0 = g_{\alpha} - \sum_l [\log |f_l|^2]$, then $g_0$ is a Green current for the cycle $\alpha_0$ and 
 \begin{equation} \label{eqn:thm-Prod-1*1}
     {(\alpha, {g}_{\alpha})} = {(\alpha_0, {g_0})} \in  \wh{\CH}^p(X). 
 \end{equation}
 Since $\alpha_0$ intersects with $\beta$ properly, Lemma \ref{lem:prod-currents-1} yields a well-defined element 
 $(\alpha_0 \cap \beta, g_0\ast g_{\beta}) \in \wh{\mathcal{Z}}^{p+q}(X|D)$. 
 We now define 
 \begin{equation}\label{eqn:thm-Prod-1*2}
     {(\alpha, {g}_{\alpha})} \cap (\beta, {g}_{\beta}):= 
 {(\alpha_0 \cap \beta, g_0 \ast g_{\beta})}  \in \wh{\CH}^{p+q}(X|D). 
 \end{equation}
Let $\alpha_0' \in \mathcal{Z}^p(X)$ be another cycle that intersects with $\beta$ properly and it is rationally equivalent to $\alpha$. Let $\alpha_0' - \alpha= \sum_t \Div(f_t')$ and let $g_0' = g_{\alpha} - \sum_t [\log |f_t'|^2]$ be the corresponding Green current for the cycle $\alpha_0'$. Then 
$(\alpha_0, {g_0}) - (\alpha_0', {g_0'}) = \sum_l \wh{\Div(f_l)} - \sum_t \wh{\Div(f_t')}  =: \sum_s \Div(h_s)$. But then 
\begin{eqnarray}\label{eqn:thm-Prod-1*3}
(\alpha_0 \cap \beta, g_0 \ast g_{\beta}) - (\alpha_0' \cap \beta, g_0' \ast g_{\beta}) &=& ((\alpha_0- \alpha_0') \cap \beta, (g_0 - g_0') \ast g_{\beta})\\
\nonumber & =& \sum_s (\Div(h_s) \cap \beta , -[\log |h_s|^2] \ast g_{\beta}). 
\end{eqnarray}
By Lemma \ref{lem:prod-curr-rel}(i), it follows that this difference lie in $\wh{\mathcal{R}}^{p+q}(X|D)$. The element 
\eqref{eqn:thm-Prod-1*2} is therefore well-defined and does not depend on the choice of $\alpha_0$. Since the intersection product of cycles and the product of currents are linear in both coordinates, we get a bilinear pairing 
    $\cap \colon \wh{\CH}^p(X) \times \wh{\mathcal{Z}}^q(X|D) \to \wh{\CH}^{p+q}(X|D)$.

Let $W$ be a closed subscheme of $X$ of codimension $q-1$ and let $f \in G(W^N, E)$, where $E$ is the pull-back of $D$ to $W^N$ (see \S~\ref{sec:CH-M} for the definition of $G(W^N, E)$). Let ${(\alpha, {g}_{\alpha})} \in \wh{\CH}^p(X)$. We need to show that
 ${(\alpha, {g}_{\alpha})} \cap \wh{\Div(f)} = 0 \in \wh{\CH}^{p+q}(X|D)$. By Chow's moving lemma, we can find a cycle $\alpha_0 \in \mathcal{Z}^p(X)$ such that $\alpha_0$ intersects with $W$, $W \cap D$  and $\Div(f)$ properly, and it is rationally equivalent to $\alpha$. Let $g_0$ be the associate Green current for $\alpha_0$ as in the previous paragraph. But then by \eqref{eqn:thm-Prod-1*2} and  Lemma \ref{lem:prod-curr-rel}(ii), we have
 \begin{equation}\label{eqn:thm-Prod-1*4}
      {(\alpha, {g}_{\alpha})} \cap \wh{\Div(f)}= 
 {(\alpha_0 \cap \Div(f), g_0 \ast (-[\log |f|^2]))} = 0 \in  \wh{\CH}^{p+q}(X|D). 
 \end{equation}
This completes the proof of the existence of the bilinear pairing \eqref{eqn:Prod-1*1}.

The rest of the properties of the theorem follow from the corresponding properties of Chow groups with modulus and of arithmetic Chow groups.
\qed

\section{Relative Hermitian Picard group} 
\label{sec:codim-one}

In this section, we generalize the results of \ref{sec:APG} and \ref{sec:Rel-Pic}, and define a Hermitian metric on the relative line bundles. Subsequently, we prove that the arithmetic Chow group $\wh{\CH}^1(X|D)$ is isomorphic to the Picard group of these relative line bundles with a Hermitian metric. We use the notations of Sections \ref{sec:APG} and \ref{sec:Rel-Pic}, and begin by defining the metric.

As before, let $f \colon X \to \Spec(k)$ be a smooth quasi-projective scheme of dimension $d \geq 0$ over an arithmetic field $k$ and let $D \hookrightarrow X$ be an effective Cartier divisor such that $D_{{\rm red}}$ is a simple normal divisor on $X$. 
Recall from Definition \ref{defn:coho-chern-class} that given a Hermitian line bundle $(\sL, ||\cdot||)$ on $X$, we have the cohomological  Chern class ${c}_1(\sL, ||\cdot||)$. Also, 
recall from Definition \ref{def:Rel-Pic} that for a line bundle $\sL$, a trivialization of $\sL$ along $D$ means an isomorphism $\phi\colon \sL_{|D} \xrightarrow{\cong} \sO_D$ that lifts to an open neighbourhood of $D$. We combine both these notions associated with a Hermitian line bundle in the following definition.

\begin{df} \label{def:A-Pic-R-W*-1}
The triple $(\sL, ||\cdot||, \phi)$ is said to be a Hermitian line bundle on $X$ with trivialization along $D$ if $(\sL, ||\cdot||)$ is a Hermitian line bundle on $X$, $\phi\colon \sL_{|D} \xrightarrow{\cong} \sO_D$ is a trivialization of $\sL$ along $D$ and ${c}_1(\sL, ||\cdot||)\in A^{1,1}(X_{\R}|D)$. 
\end{df}

We say two Hermitian line bundles
$(\sL_1, ||\cdot||_1, \phi_1)$ and $(\sL_2, ||\cdot||_2, \phi_2)$  with trivialization along $D$ are equivalent (under the relation $\sim$) if there exists an isomorphism $\Phi \colon \sL_1 \xrightarrow{\cong} \sL_2$ such that $\Phi^*(||\cdot||_2) = ||\cdot ||_1$ and there exists an neighborhood $U$ of $D$ where both $\phi_1$ and $\phi_2$ are defined and $\phi_2 \circ \Phi_{|U} = \phi_1 \colon (\sL_1)_{|U} \to \sO_U$, i.e., the following diagram commutes. 
 \begin{equation} \label{eqn:triv-1}
  \xymatrix@C.8pc{
  (\sL_1)_{|U} \ar[r]^-{\Phi_{|U}}_{\cong} \ar[dr]_-{\phi_1} & (\sL_2)_{|U} \ar[d]^-{\phi_2}\\
  &\sO_U.}
 \end{equation}
It is easy to see that the above relation is an equivalence relation. We shall denote the equivalence class of a Hermitian line bundle $(\sL, ||\cdot||, \phi)$  with trivialization along $D$ 
by the same symbol $(\sL, ||\cdot||, \phi)$. 
In the above case, we shall call $\Phi$ to be the isomorphism between  triples $(\sL_1, ||\cdot||_1, \phi_1)$ and $(\sL_2, ||\cdot||_2, \phi_2)$. 

\begin{df} \label{def:A-Pic-R-W*}
We define the relative Hermitian Picard group $\wh{\Pic}(X, D)$ as the set of the  equivalence classes of Hermitian line bundles on $X$ with trivialization along $D$, i.e., 
\begin{equation} \label{eqn:A-Pic-R-W*-2}
\wh{\Pic}(X, D):=\{(\sL, ||\cdot||, \phi)~|~{c}_1(\sL, ||\cdot||)\in A^{1,1}(X_{\R}|D)\}/ \sim,
\end{equation}
where the equivalence relation $\sim$ is as defined above. 
\end{df}
 
\begin{lem} \label{lem:A-Pic-R-W-Group}
There exists a natural abelian group structure on $\wh{\Pic}(X, D)$ so that 
\begin{equation} \label{eqn:A-Pic-R-W-Gp*-1}
(\sL_1, ||\cdot||_1, \phi_1)\cdot (\sL_2, ||\cdot||_2, \phi_2) =
(\sL_1 \otimes \sL_2,||\cdot||_1 \otimes ||\cdot||_2, \phi_1 \otimes  \phi_2).
\end{equation}
\end{lem}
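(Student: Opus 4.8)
The plan is to deduce the lemma from the corresponding statements already available: the Hermitian Picard group $\wh{\Pic}(X)$ of Section~\ref{sec:APG} and the relative Picard group $\Pic(X,D)$ of Section~\ref{sec:Rel-Pic} are both abelian groups, and the only genuinely new input needed is the additivity of the cohomological Chern class under tensor products, after which the whole construction becomes bookkeeping.

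First I would record this additivity. If $(\sL_1,||\cdot||_1) = (U_\alpha, g^1_{\alpha\beta}, h^1_{\alpha,\sigma})$ and $(\sL_2,||\cdot||_2) = (U_\alpha, g^2_{\alpha\beta}, h^2_{\alpha,\sigma})$, then $(\sL_1\otimes\sL_2,||\cdot||_1\otimes||\cdot||_2)$ has local data $(U_\alpha, g^1_{\alpha\beta}g^2_{\alpha\beta}, h^1_{\alpha,\sigma}h^2_{\alpha,\sigma})$, so from the local formula \eqref{eqn:Coho-Chern-class*-1} and $\log(h^1_{\alpha,\infty}h^2_{\alpha,\infty}) = \log h^1_{\alpha,\infty} + \log h^2_{\alpha,\infty}$ one obtains $c_1(\sL_1\otimes\sL_2,||\cdot||_1\otimes||\cdot||_2) = c_1(\sL_1,||\cdot||_1) + c_1(\sL_2,||\cdot||_2)$ in $A^{1,1}(X_{\R})$. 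Since $A^{1,1}(X_{\R}|D) = A^{1,1}(X_{\R}) \cap \Sigma_{D_\infty}A^{1,1}_{X_\infty}$ is an intersection of subgroups (the second being a kernel by Lemma~\ref{lem:Surj*-2}), hence itself a subgroup, it follows that whenever $c_1(\sL_i,||\cdot||_i) \in A^{1,1}(X_{\R}|D)$ for $i=1,2$, also $c_1(\sL_1\otimes\sL_2,||\cdot||_1\otimes||\cdot||_2)\in A^{1,1}(X_{\R}|D)$. Combined with the fact, already used in \eqref{eqn:prod-RPic}, that $\phi_1\otimes\phi_2$ is a trivialization of $\sL_1\otimes\sL_2$ along $D$, this shows that the right-hand side of \eqref{eqn:A-Pic-R-W-Gp*-1} is again an element of $\wh{\Pic}(X,D)$.

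Next I would check that the product descends to equivalence classes and satisfies the group axioms. If $\Phi_i\colon\sL_i\xrightarrow{\cong}\sL_i'$ realizes $(\sL_i,||\cdot||_i,\phi_i)\sim(\sL_i',||\cdot||_i',\phi_i')$ for $i=1,2$, then $\Phi_1\otimes\Phi_2$ is an isometry for the tensor metrics, and on a common neighborhood $U$ of $D$ where all the data are defined, diagram \eqref{eqn:triv-1} for each factor gives $(\phi_1'\otimes\phi_2')\circ(\Phi_1\otimes\Phi_2)_{|U} = \phi_1\otimes\phi_2$; hence $\Phi_1\otimes\Phi_2$ realizes the equivalence of the two products, so the product is well defined on $\wh{\Pic}(X,D)$. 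Associativity, commutativity and the unit axiom then follow from the corresponding properties of $\otimes$ on line bundles, metrics and trivializations, exactly as in Section~\ref{sec:APG} and around \eqref{eqn:prod-RPic}: the identity is $(\sO_X, |\cdot|, \id)$, whose cohomological Chern class is the zero form (hence in $A^{1,1}(X_{\R}|D)$) and whose trivialization $\id$ is the one of Definition~\ref{def:Rel-Pic}. Finally, the inverse of $(\sL,||\cdot||,\phi)$ is $(\sL^{-1}, ||\cdot||', (\phi^\vee)^{-1})$, where $||\cdot||'$ has local data $1/h_{\alpha,\sigma}$, so that $c_1(\sL^{-1},||\cdot||') = -c_1(\sL,||\cdot||)\in A^{1,1}(X_{\R}|D)$ by the additivity above, and $(\phi^\vee)^{-1}$ is the trivialization of $\sL^{-1}$ along $D$ introduced in Section~\ref{sec:Rel-Pic}.

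The main obstacle, such as it is, is purely organizational: one must simultaneously track three compatible structures — the Hermitian Picard group $\wh{\Pic}(X)$ obtained by forgetting $\phi$, the relative Picard group $\Pic(X,D)$ obtained by forgetting the metric, and the closed condition $c_1(\sL,||\cdot||)\in A^{1,1}(X_{\R}|D)$ that cuts out $\wh{\Pic}(X,D)$ inside their fibre product over $\Pic(X)$. Once the additivity of $c_1$ under $\otimes$ is in hand, no genuine difficulty remains, and the forgetful maps $\wh{\Pic}(X,D)\to\wh{\Pic}(X)$ and $\wh{\Pic}(X,D)\to\Pic(X,D)$ are group homomorphisms by construction.
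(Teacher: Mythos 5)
Your proposal is correct and follows essentially the same route as the paper: the key point in both is the explicit inverse $(\sL^{-1},||\cdot||^{-1},(\phi^\vee)^{-1})$ with local metric data $1/h_{\alpha,\sigma}$, giving $c_1(\sL^{-1},||\cdot||^{-1})=-c_1(\sL,||\cdot||)\in A^{1,1}(X_{\R}|D)$. You additionally spell out the additivity of $c_1$ under tensor product (hence closure of the product in $\wh{\Pic}(X,D)$) and the descent to equivalence classes, which the paper leaves as an immediate consequence of the definitions; this is a harmless and slightly more careful elaboration, not a different argument.
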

 \begin{proof}
    By Definition \ref{def:A-Pic-R-W*}, it follows that the above product is well-defined, it is associative, and $(\sO_X, |\cdot|, \id)$ is the identity of the product. The commutativity of the product is also immediate from the definition of the equivalence relation. We show that every element $(\sL, || \cdot||, \phi) \in \wh{\Pic}(X, D)$ has an inverse. 

Let $U \subset X$ be an open neighborhood of $D$ such that 
there exists an isomorphism ${\phi'} \colon \sL_{|U} \to \sO_U$ satisfying  $\phi = {{\phi}'}_{|D}$. 
We let $(\phi')^{\vee} \colon \sO_U \xrightarrow{\cong}  \sL_{|U}^{-1}$ is the dual isomorphism obtained by tensoring ${\phi'}$ with $\sL_{|U}^{-1}$. We now denote by $1/\phi \colon \sL^{-1}_{|D} \xrightarrow{\cong} \sO_D$ the restriction of the isomorphism 
$((\phi')^{\vee})^{-1} \colon  \sL_{|U}^{-1} \xrightarrow{\cong} \sO_U$. 
Recall that given line bundle $\sL$ with a Hermitian form $||\cdot||$
that is locally given by smooth function $h_{\alpha, \infty} \colon U_{\alpha,\infty} \to \R_{>0}$, there exists a Hermitian form $||\cdot||^{-1}$ on $\sL^{-1}$ such that it is locally given by smooth function $h_{\alpha, \infty}^{-1} \colon U_{\alpha, \infty} \to \R_{>0}$. In particular, 
$c_1 (\sL^{-1}, ||\cdot||^{-1}) = - c_1(\sL, ||\cdot||)$. 
It therefore follows that 
$(\sL^{-1}, ||\cdot||^{-1}, 1/\phi) \in \wh{\Pic}(X,D)$ and it is the inverse of the element 
$(\sL, ||\cdot||, \phi)$. 
This completes the proof.  
\end{proof}

We now define the arithmetic Chern class of a Hermitian line bundle with trivialization.
Let $(\sL, ||\cdot||, \phi)$ be a  Hermitian line bundle with trivialization along $D$. Let $\phi'\colon \sL_{|U} \xrightarrow{\cong} \sO_U$ be a lift of $\phi$ for an open neighborhood $D \subset U$, i.e., $\phi'_{|D} = \phi \colon \sL_{|D} \xrightarrow{\cong} \sO_D$. We then define the arithmetic Chern class of $(\sL, ||\cdot||, \phi)$ as follows. 
\begin{equation} \label{eqn:Chern-HLB-W*-0}
\wh{c}_1(\sL, ||\cdot||, \phi) =
 (\Div(\phi'^{-1}), -[\log ||(\phi')^{-1} ||^2]) \in \mathcal{Z}^{1}(X|D) \oplus {D}^{0,0}(X_{\infty}).
\end{equation}
Since $\phi$ is a trivialization, by Definition \ref{def:A-Pic-R-W*-1}, we have ${c}_1(\sL, ||\cdot||)_{|D}=0 \in A^{1,1}(\wt{D}_{\infty})$. Since 
$\delta_{\Div(\phi'^{-1})} + d d^c (-[\log ||(\phi')^{-1} ||^2] ) = [{c}_1(\sL, ||\cdot||)]$, it follows from Definition \ref{def:A-cycles*-M} that $\wh{c}_1(\sL, ||\cdot||, \phi) \in \wh{\mathcal{Z}}^1(X|D)$. We let its image in $\wh{\CH}^1(X|D)$ to be denoted by the same symbol. 
As an element of $\wh{\CH}^1(X|D)$, the Chern class $\wh{c}_1(\sL, ||\cdot||, \phi)$ does not depend on the chosen extension $\phi'$ and it is invariant for the equivalence relation on Hermitian line bundles with trivialization. In short, the following definition now makes sense.

\begin{df}\label{def:Chern-HLB-W}
Given $(\sL, ||\cdot||, \phi) \in \wh{\Pic}(X, D)$, we define its arithmetic Chern class as follows. 
\begin{equation} \label{eqn:Chern-HLB-W*-2}
\wh{c}_1(\sL, ||\cdot||, \phi) =
 (\Div(\phi'^{-1}), -[\log ||\phi'^{-1} ||^2]) \in \wh{\CH}^1(X|D),
\end{equation}
where $\phi'$ is a lift of the isomorphism $\phi$. 
\end{df}

\begin{lem}\label{lem:Chern-HLB-WN-Hom}
The arithmetic Chern class defines a group homomorphism.  
\begin{equation} \label{eqn:Chern-HLB-WN-Hom*-1}
\wh{c}_1 \colon \wh{\Pic}(X,D) \to \wh{\CH}^1(X|D). 
\end{equation}
\end{lem}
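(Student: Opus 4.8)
The plan is to verify directly that $\wh{c}_1$ respects the product defined in Lemma~\ref{lem:A-Pic-R-W-Group}; since both $\wh{\Pic}(X,D)$ and $\wh{\CH}^1(X|D)$ are groups, preservation of the product automatically gives preservation of the identity and of inverses, so this is all that needs to be checked. We have already noted, in the discussion preceding Definition~\ref{def:Chern-HLB-W}, that the class $\wh{c}_1(\sL, ||\cdot||, \phi) \in \wh{\CH}^1(X|D)$ is independent of the chosen lift $\phi'$ of $\phi$ and is invariant under the equivalence relation on Hermitian line bundles with trivialization, so we are free to make convenient choices of lifts and of representatives.

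First I would fix $(\sL_1, ||\cdot||_1, \phi_1)$ and $(\sL_2, ||\cdot||_2, \phi_2)$ in $\wh{\Pic}(X,D)$ together with lifts $\phi_i' \colon (\sL_i)_{|U_i} \xrightarrow{\cong} \sO_{U_i}$ over open neighbourhoods $U_i$ of $D$, and set $U = U_1 \cap U_2$, which is again an open neighbourhood of $D$. Then $\phi' := (\phi_1')_{|U} \otimes (\phi_2')_{|U}$ is a lift of $\phi_1 \otimes \phi_2$ over $U$. Writing $s_i := (\phi_i')^{-1}$ for the associated rational section of $\sL_i$ (regular and a unit on $U_i$, hence with divisor disjoint from $D$), the rational section of $\sL_1 \otimes \sL_2$ associated to $\phi'$ is $s_1 \otimes s_2$. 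Choosing a common trivialising cover $(U_\alpha)$ of $X$ for both $\sL_1$ and $\sL_2$, with transition functions $g^i_{\alpha\beta}$ and metric data $h^i_{\alpha}$, the bundle $\sL_1 \otimes \sL_2$ has transition functions $g^1_{\alpha\beta} g^2_{\alpha\beta}$, metric $h^1_{\alpha} h^2_{\alpha}$, and $s_1 \otimes s_2$ is locally $s^1_\alpha s^2_\alpha$. Hence $\Div((\phi')^{-1}) = \Div(s_1 \otimes s_2) = \Div(s_1) + \Div(s_2)$ as Weil divisors, and from $||s||^2 = s_\alpha \ov{s_\alpha} h_\alpha$ we obtain $||s_1 \otimes s_2||^2 = ||s_1||_1^2 \cdot ||s_2||_2^2$ on a dense open subset, so that $-[\log ||s_1 \otimes s_2||^2] = -[\log ||s_1||_1^2] - [\log ||s_2||_2^2]$ as elements of $D^{0,0}(X_{\R})$. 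Combining these two identities gives
\[
\wh{c}_1\big((\sL_1, ||\cdot||_1, \phi_1)\cdot (\sL_2, ||\cdot||_2, \phi_2)\big) = \wh{c}_1(\sL_1, ||\cdot||_1, \phi_1) + \wh{c}_1(\sL_2, ||\cdot||_2, \phi_2)
\]
already in $\wh{\mathcal{Z}}^1(X|D)$, hence a fortiori in $\wh{\CH}^1(X|D)$. Finally, taking $\phi' = \id$ over all of $X$ shows $\wh{c}_1(\sO_X, |\cdot|, \id) = (0, 0)$, the identity of $\wh{\CH}^1(X|D)$, confirming consistency with the group structures.

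There is no serious obstacle here: the argument is bookkeeping built on the compatibility of divisors, hermitian norms, and rational sections with tensor products of line bundles. The only point requiring a little care is matching the lifts $\phi_i'$, which are a priori defined over different neighbourhoods of $D$; this is handled by passing to $U_1 \cap U_2$ and appealing to the already-established independence of $\wh{c}_1$ from the chosen lift.
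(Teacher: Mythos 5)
Your proof is correct and follows essentially the same route as the paper's: a direct verification that $\wh{c}_1$ is additive on tensor products, using $\Div(s_1\otimes s_2)=\Div(s_1)+\Div(s_2)$ and $\log\|s_1\otimes s_2\|^2=\log\|s_1\|_1^2+\log\|s_2\|_2^2$. Your version merely spells out a few details the paper leaves implicit (passing to $U_1\cap U_2$, choosing a common trivialising cover, and checking the identity element).
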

\begin{proof}
 Let  $(\sL_1, || \cdot ||_1, \phi_1)$ and $ (\sL_2, || \cdot ||_2, \phi_2) \in \wh{\Pic}(X, D)$ be two elements and for $i=1,2$, let the isomorphism $\phi_i'\colon (\sL_i)_{|U} \xrightarrow{\cong} \sO_U$ be the lift of $\phi_i$.   
 We then have 
 \begin{eqnarray}\label{eqn:Chern-HLB-WN-Hom*-3}
 \nonumber 
 \wh{c}_1((\sL_1, || \cdot ||_1, \phi_1) \otimes (\sL_2, || \cdot ||_2, \phi_2)) & = & \wh{c}_1(\sL_1\otimes \sL_2, || \cdot ||_1\otimes ||\cdot||_2, \phi_1\otimes \phi_2)\\
 \nonumber & =&  (\Div(\phi_1'^{-1} \otimes \phi_2'^{-1}), -[\log (||\phi_1'^{-1} ||_1^2\ ||\phi_2'^{-1}||_2^2 )])\\
\nonumber  & = & (\Div(\phi_1'^{-1} ), -[\log (||\phi_1'^{-1} ||_1^2]) \\
\nonumber & &\hspace{2cm} + 
 ( \Div(\phi_2'^{-1}), -[\log ( ||\phi_2'^{-1}||_2^2 )])\\
 & = & \wh{c}_1(\sL_1, || \cdot ||_1, \phi_1) + \wh{c}_1(\sL_2, || \cdot ||_2, \phi_2). 
 \end{eqnarray}
This completes the proof. 
\end{proof}

We now prove the main result of this section. 

\begin{thm}\label{thm:Chern-Iso}
The arithmetic Chern class group homomorphism \eqref{eqn:Chern-HLB-WN-Hom*-1}
fits into the following commutative diagram such that the vertical arrows are isomorphisms. 
\begin{equation} \label{eqn:Chern-HLB-WN-Hom*-1.5}
    \xymatrix@C.8pc{
    \wh{\Pic}(X,D) \ar[r] \ar[d]^-{\wh{c}_1}_-{\cong}& \wh{\Pic}(X) \oplus {\Pic}(X,D) 
    \ar[d]^-{(\wh{c}_1, c_1)}_-{\cong}\\
    \wh{\CH}^1(X|D) \ar[r] & \wh{\CH}^1(X) \oplus {\CH}^1(X|D),
    }
\end{equation}
where the right vertical arrow is given by the isomorphisms \eqref{eqn:A-Chern-cl-3.1} and \eqref{eqn:c1-M*-2}. 
    \end{thm}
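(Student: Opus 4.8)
The plan is to prove that the left vertical arrow $\wh{c}_1$ is an isomorphism by constructing an explicit inverse $\cyc_{X|D}\colon \wh{\CH}^1(X|D)\to\wh{\Pic}(X,D)$, obtained by running the two cycle class constructions from the proofs of Proposition~\ref{prop:A-Chern-class-3} (the Hermitian metric attached to a Green current) and Proposition~\ref{prop:c1-M-Iso} (the trivialisation along $D$ attached to a cycle with modulus) simultaneously. The top horizontal map will be $(\sL,\|\cdot\|,\phi)\mapsto\big((\sL,\|\cdot\|),(\sL,\phi)\big)$, and the bottom one $(Z,g)\mapsto\big((Z,g),\zeta_{X|D}(Z,g)\big)$, where in the first coordinate one uses the evident inclusions $\wh{\caZ}^1(X|D)\subset\wh{\caZ}^1(X)$ and $\wh{\caR}^1(X|D)\subset\wh{\caR}^1(X)$ — a cycle with modulus is a cycle, and a Green current whose associated form vanishes on $D$ is a Green current. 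Checking that both are well-defined group homomorphisms is immediate.

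First I would verify commutativity of the square. Given $(\sL,\|\cdot\|,\phi)$, pick a lift $\phi'\colon\sL_{|U}\xrightarrow{\cong}\sO_U$ over a neighbourhood $U$ of $D$ and let $s$ be the rational section of $\sL$ corresponding to $(\phi')^{-1}$; then $\Div(s)=\Div((\phi')^{-1})$ and $\|s\|=\|(\phi')^{-1}\|$, so comparing Definitions~\ref{defn:A-Chern-class}, \ref{def:c1-M} and \ref{def:Chern-HLB-W} shows that going down then right and going right then down both yield $\big(\wh{c}_1(\sL,\|\cdot\|),c_1(\sL,\phi)\big)$. This is a short unfolding of definitions.

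The heart of the argument is the inverse map. Given $(Z,g)\in\wh{\caZ}^1(X|D)$ with $\delta_Z+dd^c g=[\omega]$ and $\omega\in A^{1,1}(X_\R|D)$, I would write $Z$ as a Cartier divisor $(U_\alpha,f_\alpha)$, put $\sO_X(Z)=(U_\alpha,g_{\alpha\beta}=f_\alpha/f_\beta)$, use $|Z|\cap D=\emptyset$ to glue the $f_\alpha$ into a nowhere vanishing section of $\sO_X(Z)$ over a neighbourhood of $D$ — hence (as in Proposition~\ref{prop:c1-M-Iso}) a trivialisation $\phi$ along $D$ — and use $g$ to manufacture a metric $\|\cdot\|_g$ on $\sO_X(Z)$ with $c_1(\sO_X(Z),\|\cdot\|_g)=\omega$ (as in Proposition~\ref{prop:A-Chern-class-3}). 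The one genuinely new point, and it is essentially tautological, is that the modulus hypothesis $\omega_{|D}=0$ is exactly the requirement $c_1(\sO_X(Z),\|\cdot\|_g)\in A^{1,1}(X_\R|D)$ of Definition~\ref{def:A-Pic-R-W*-1}, so $(\sO_X(Z),\|\cdot\|_g,\phi)\in\wh{\Pic}(X,D)$; I would set $\cyc_{X|D}(Z,g):=(\sO_X(Z),\|\cdot\|_g,\phi)$. Then I would check independence of the Cartier presentation, descent modulo $\wh{\caR}^1(X|D)$, and that $\wh{c}_1$ and $\cyc_{X|D}$ are mutually inverse — each of which is precisely the corresponding verification in the proofs of Propositions~\ref{prop:A-Chern-class-3} and \ref{prop:c1-M-Iso}, now done for the line bundle, the metric and the trivialisation at once. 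Together with commutativity of the square this proves the theorem.

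The main obstacle will be bookkeeping rather than a real difficulty: one has to make sure that the metric construction of Proposition~\ref{prop:A-Chern-class-3}, which is built from local potentials and a choice of Green current representative, is compatible with the trivialisation $\phi$ — i.e.\ the local section of $\sO_X(Z)$ used to trivialise along $D$ is the same data whose norm recovers $g$ — and that for $f\in G(X,D)$ the triple $\cyc_{X|D}(\wh{\Div(f)})$ is trivial in $\wh{\Pic}(X,D)$, where one uses $f\equiv1$ along $D$ so that the canonical identification $\sO_X(\Div f)\cong\sO_X$ restricts to the identity on $D$. Since the analogous checks are already carried out (or explicitly deferred to this proof) in Sections~\ref{sec:APG} and \ref{sec:Rel-Pic}, I expect the write-up to be routine once these compatibilities are organised carefully.
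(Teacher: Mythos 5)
Your proposal matches the paper's proof: both establish commutativity by unwinding the definitions of the vertical arrows and then construct the explicit inverse $\cyc_{X|D}(Z,g)=(\sO(U_\alpha,f_\alpha),\|\cdot\|_g,\phi=(s_{|D})^{-1})$ by running the constructions of Propositions~\ref{prop:A-Chern-class-3} and \ref{prop:c1-M-Iso} simultaneously, with the key observation that $\omega_{|D}=0$ is exactly the condition $c_1(\sO_X(Z),\|\cdot\|_g)\in A^{1,1}(X_\R|D)$. The remaining verifications (independence of choices, triviality on $\wh{\Div(f)}$ for $f\in G(X,D)$, mutual inverseness) are handled the same way in both.
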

\begin{proof}
    The commutativity of \eqref{eqn:Chern-HLB-WN-Hom*-1.5} follows from the definition of the vertical arrows. By Propositions \ref{prop:A-Chern-class-3} and \ref{prop:c1-M-Iso}, it remains to show that the left vertical arrow is an isomorphism. As in the referred lemmas, we explicitly construct the inverse $\cyc_{X|D} \colon \wh{\CH}^1(X|D) \to  \wh{\Pic}(X, D)$ of the left vertical arrow as follows. 

    Let $(Z, g) \in \wh{\mathcal{Z}}^1(X|D) \subset  \mathcal{Z}^1(X|D) \oplus {D}^{0,0}(X_{\R})$. Since $X$ is a smooth variety, the Weil divisors on $X$ are the same as the Cartier divisors on $X$. As in the proofs of Propositions \ref{prop:A-Chern-class-3} and \ref{prop:c1-M-Iso}, we let $(U_{\alpha}, f_{\alpha})$ be a Cartier divisor for $Z$. By the proof of 
    Proposition~\ref{prop:A-Chern-class-3}, we have 
    a Hermitian line bundle 
\begin{equation} \label{eqn:Chern-Iso-*0}
(\sO_X(U_{\alpha}, f_{\alpha}), || \cdot||_g) = (U_{\alpha}, g_{\alpha \beta}= f_{\alpha}/f_{\beta}, h_{\alpha, \sigma} = {{\rm exp}}^{-(g)|_{U_{\alpha, \sigma}} - [\log |\sigma^*(f_{\alpha})|^2]}).
\end{equation}
Moreover, Proposition~\ref{prop:c1-M-Iso} yields a  trivialization $\phi \colon \sO_X(U_{\alpha}, f_{\alpha}))_{|D} \xrightarrow{\cong} \sO_{D}$ of the line bundle  $\sO(U_{\alpha}, f_{\alpha})$. Indeed, if  we let $U = X \setminus |Z|$, then $f_{\alpha} \in \sO(U \cap U_{\alpha})^{\times}$ and they patch up to yield a section $s \in \Gamma(U, \sO(U_{\alpha}, f_{\alpha}))$ such that $s \colon \sO_{U}\xrightarrow{\cong} \sO(U_{\alpha}, f_{\alpha}))_{|U}$. We let $\phi=(s_{|D})^{-1} \colon \sO(U_{\alpha}, f_{\alpha}))_{|D} \xrightarrow{\cong} \sO_{D}$ be the induced trivialization of the line bundle  $\sO(U_{\alpha}, f_{\alpha})$.
We therefore get the Hermitian line bundle $( \sO(U_{\alpha}, f_{\alpha}), || \cdot ||_g, \phi=(s_{|D})^{-1} )$ on $X$ with trivialization along $D$. 
We let its class in $\in \wh{\Pic}(X, D)$ be denoted by $\cyc_{X|D}(Z,g)$. In other words, we let  
\begin{equation} \label{eqn:ACM-M*11}
\cyc_{X|D}(Z,g) :=  ( \sO(U_{\alpha}, f_{\alpha}), || \cdot ||_g, \phi=(s_{|D})^{-1} ) \in \wh{\Pic}(X, D).
\end{equation}
One needs to check that the above element does not depend on the chosen line bundle from the isomorphic class of line bundles corresponding to the transition functions $g_{\alpha \beta}$, and it also does not depend on the Cartier divisor for $Z$, i.e., does not depend on the transition functions $f_{\alpha}$. These checks are routine and we leave them to the reader. For $f \in G(X,D)$, we have $\cyc_{X|D} (\wh{\Div(f)}) = (\sO_X, | \cdot |, 1) = 0 \in   \wh{\Pic}(X, D)$. In particular, we have a group homomorphism $\cyc_{X|D} \colon \wh{\CH}^1(X|D) \to \wh{\Pic}(X, D)$. We claim that this is an inverse of the arithmetic Chern class group homomorphism \eqref{eqn:Chern-HLB-WN-Hom*-1}. The proof is similar to the proofs of Propositions \ref{prop:A-Chern-class-3} and \ref{prop:c1-M-Iso}. We include it here for the sake of completion. 

We first compute the composition $\cyc_{X|D} \circ \wh{c}_1$.   Let $(\sL, || \cdot ||, \phi) \in  \wh{\Pic}(X, D)$. Let 
$\{U_{\alpha}\}$ be a cover of $X$ such that $\sL$ is trivial along $U_{\alpha}$ and let $U$ be an open neighbourhood of $D$ with a lift $\phi' \colon \sL_{|U} \xrightarrow{\cong} \sO_U$ of the trivialisation  $\phi$. Then the rational section $(\phi')^{-1} \colon \sO_U \xrightarrow{\cong}\sL_{|U} $ of $\sL$ is given by rational functions $f_{\alpha} \in \sO(U \cap U_{\alpha})$.  In particular, the Weil divisor 
$ \Div((\phi')^{-1})$ corresponds the Cartier divisor $(U_{\alpha} , f_{\alpha})$. 
If the Hermitian metric $||\cdot||$ is locally defined by the functions $h_{\alpha, \sigma}$ (on $U_{\alpha, \sigma}$), then the associated current $g = -[\log || (\phi')^{-1} ||^2]$ has the restrictions 
$g_{|U_{\alpha, \sigma}} = -[\log (f_{\alpha, \sigma} \ov{f_{\alpha, \sigma}} h_{\alpha, \sigma})] $. We therefore have
\begin{eqnarray*}
\cyc_{X|D} \circ \wh{c}_1 (\sL, || \cdot ||, \phi) &=& \cyc_{X|D} ( \Div((\phi')^{-1}), -[\log || (\phi')^{-1} ||^2])\\
& =^1& (\sO(U_{\alpha}, f_{\alpha}), {{\rm exp}}^{-(g_{|U_{\alpha, \sigma}} - [\log | \sigma^*(f_{\alpha})|^2]}, (s_{|D})^{-1})\\
& = & (\sO(U_{\alpha}, f_{\alpha}),h_{\alpha, \sigma},  (s_{|D})^{-1})\\ 
& =^2& (\sL, ||\cdot ||, \phi),
\end{eqnarray*}
where the section $s$ in $=^1$ of the line bundle $\sO(U_{\alpha}, f_{\alpha})$  is given by the rational function $f_{\alpha}$ and the equality $=^2$ follows from the above local descriptions of $\sL$, $||\cdot ||$ and $\phi$. In particular, we have 
\begin{equation} \label{eqn:Chern-Iso*-5.1}
\cyc_{X|D} \circ \wh{c}_1 = \id \colon \wh{\Pic}(X, D) \to \wh{\Pic}(X, D). 
\end{equation}
We leave the identity $\wh{c}_1 \circ \cyc_{X|D}=\id$ as an easy exercise for the reader.
\end{proof}

\bibliographystyle{amsalpha}

\begin{thebibliography}{[GHV72]}

\bibitem[BE03]{BE03}
S. Bloch, H. Esnault, {\sl An additive version of higher Chow groups}, Annales
Scientifiques de l’\'Ecole Normale Su\'perieure, Volume {\bf 36}, (Elsevier, 2003), 463--477.\

\bibitem[BK18]{BAK18}  F. Binda, A. Krishna, {\sl Zero cycles with modulus and zero cycles on singular varieties\/}, Compositio Mathematica, {\bf 154}, (2018), 120--187. \

\bibitem[BS19]{Binda-Saito}  F. Binda, S. Saito, {\sl Relative cycles with moduli and
regulator maps\/}, J. Math. Inst. Jussieu, {\bf 18}, (2019), 1233--1293. \


\bibitem[GS90]{GS90} H. Gillet, C. Soule, {\sl Arithmetic intersection theory\/}, Publications
Math. IHES {\bf 72}, (1990), 94--174.\



\bibitem[GS90-2]{GS90-2} H. Gillet, C. Soule, {\sl Characteristic classes for algebraic vector bundles with Hermitian metrics\/}, I, II, Ann. of Math., {\bf 131}, (1990), 163--203 and 205-238.\

\bibitem[GKR20]{GKR20} R. Gupta, A. Krishna, J. Rathore, {\sl Tame class field theory over local fields\/}, arXiv:2209.02953v1 [math.AG]. \

\bibitem[GK22]{GK22} R. Gupta, A. Krishna, {\sl Idele class groups with modulus\/}, Advances in Mathematics, {\bf 404}, (2022) Part A.
\

\bibitem[GHV72]{G-H-V} W. Greub, S.Halperin, R. Vanstone, {\sl Connections, curvature and cohomology\/}, {\bf 1}, New York, Academic Press, (1972).\

\bibitem[GH78]{GH78} P. Griffiths, J. Harris, {\sl  Principles of algebraic geometry\/}, John Wiley and Sons, (1978).\

\bibitem[Ful84]{Fulton} W. Fulton, {\sl Intersection theory\/}, vol. 2 of Ergebnisse der Mathematik und ihrer Grenzgebiete (3) [Results in Mathematics
and Related Areas (3)], Springer-Verlag, Berlin, (1984).\



\bibitem[KS16]{Kerz-Saito} M. Kerz, S. Saito, {\sl Chow group of 0-cycles with 
modulus and higher-dimensional class field theory\/}, Duke Math. J., 
{\bf 165}, (2016), 2811--2897. \


\bibitem[KL08]{KL-08} A. Krishna, M. Levine, {\sl Additive higher Chow groups of schemes\/}, J.
Reine Angew. Math. {\bf 619} (2008), 75–140. \

\bibitem[KP12]{KP12} A. Krishna, J. Park, {\sl Moving lemma for additive higher Chow groups\/}, Algebra Number Theory, {\bf 6}, (2012), 293–326.\

\bibitem[KP17]{KP17} A. Krishna, J. Park, {\sl A module structure and a vanishing theorem for cycles with modulus\/}, Math. Res. Lett., {\bf 24}, (2017), no. 4, 1147--1176.\

\bibitem[Lan91]{Lan91} S. Landsburg, {\sl Relative Chow groups\/}, Illinois J. Math., {\bf 35}, (1991), 618--641.\

\bibitem[Lee03]{Lee03} J. Lee, {\sl Introduction to Smooth Manifolds\/}, Springer New York, (2003).\

\bibitem[Mor14]{Moriwaki} A. Moriwaki, {\sl Arakelov Geometry\/}, Translations of Mathematical monographs, Vol {\bf 244}, (2014). \

\bibitem[Park09]{Park09} J. Park, {\sl Regulators on additive higher Chow groups}, Amer. J. Math. {\bf 131}:1 (2009), 257–276. \

\bibitem[StP]{StackP} Stacks project authors, {\sl The Stacks project\/}, \url{https://stacks.math.columbia.edu}, (2024).\

\bibitem[SV96]{Suslin-Voe} A. Suslin, V. Voevodky, {\sl Singular homology of abstract algebraic
varieties\/}, Invent. math.  {\bf 123}, (1996), 61--94.\



\bibitem[SABK]{Lectures} C. Soulé, D. Abramovich, J. Burnol, J. Kramer, {\sl Lectures on Arakelov Geometry\/}, 
 Cambridge Studies in Advanced Mathematics {\bf 33}, (1992)\

 \bibitem[Rob70]{Roberts} J. Roberts, {\sl Chow's moving lemma\/}, Algebraic Geometry (Oslo 1970) Proc. Fifth Nordic Summer School in Math. (Wolters-Noordhoff, Groningen, 1972), 89--96.


 
\end{thebibliography}

\end{document}